\numberwithin{equation}{section}
 \newtheorem{theorem}{Theorem}[section]
 \newtheorem{lemma}[theorem]{Lemma}
 \def\3bar{{|\hspace{-.02in}|\hspace{-.02in}|}}
\def\T{{\mathcal{T}}}
\def\bphi{\boldsymbol{\phi}}
\def\beta{\boldsymbol{\eta}}
\def\cal#1{{\mathcal #1}}
\def\pT{{\partial T}}
\def\bw{{\mathbf{w}}}
\def\bf{{\mathbf{f}}}
\def\bg{{\mathbf{g}}}
\def\bu{{\mathbf{u}}}
\def\bv{{\mathbf{v}}}
\def\bn{{\mathbf{n}}}
\def\bvarphi{{\boldsymbol{\varphi}}}
\newtheorem{remark}{Remark}[section]
\newtheorem{algorithm}{Stabilizer-Free WG Algorithm}[section]
\numberwithin{equation}{section}
\def\3bar{{|\hspace{-.02in}|\hspace{-.02in}|}}
\def\p#1{\begin{pmatrix}#1\end{pmatrix}}
 \def\cal#1{\mathcal{#1}}
\def\ad#1{\begin{aligned}#1\end{aligned}}  \def\b#1{\mathbf{#1}} 
 \def\an#1{\begin{align}#1\end{align}}
\begin{document}

\title []
 {A Simple and Robust Weak Galerkin Method for the Brinkman Equations on Non-Convex Polytopal Meshes}

  \author {Chunmei Wang$\dag$}\thanks{$\dag$ Corresponding author. } 
  \address{Department of Mathematics, University of Florida, Gainesville, FL 32611, USA. }
  \email{chunmei.wang@ufl.edu}
  \thanks{The research of Chunmei Wang was partially supported by National Science Foundation Grant DMS-2136380.}

\author {Shangyou Zhang}
\address{Department of Mathematical Sciences,  University of Delaware, Newark, DE 19716, USA}   \email{szhang@udel.edu}

\begin{abstract}
 This paper presents a novel Stabilizer-Free weak Galerkin (WG) finite element method for solving the Brinkman equations without the need for conventional stabilization techniques. The Brinkman model, which mathematically blends features of both the Stokes and Darcy equations, describes fluid flow in multi-physics environments, particularly in heterogeneous porous media characterized by spatially varying permeability. In such settings, flow behavior may be governed predominantly by Darcy dynamics in certain regions and by Stokes dynamics in others.
A central difficulty in this context arises from the incompatibility of standard finite element spaces: elements stable for the Stokes equations typically perform poorly for Darcy flows, and vice versa. The primary challenge addressed in this study is the development of a unified numerical scheme that maintains stability and accuracy across both flow regimes.
To this end, the proposed WG method demonstrates a robust capacity to resolve both Stokes- and Darcy-dominated flows through a unified framework. The method supports general finite element partitions consisting of convex and non-convex polytopal elements, and employs bubble functions as a critical analytical component to achieve stability and convergence. Optimal-order error estimates are rigorously derived for the WG finite element solutions. Additionally, a series of numerical experiments is conducted to validate the theoretical findings, illustrating the method’s robustness, reliability, flexibility, and accuracy in solving the Brinkman equations.

\end{abstract}

\keywords{weak Galerkin, Stabilizer-Free,    weak gradient, weak divergence,  bubble functions,  non-convex, polytopal meshes,  Brinkman equations.}

\subjclass[2010]{65N30, 65N15, 65N12, 65N20}
  
\maketitle

\section{Introduction}  
This paper is devoted to the development of stable and efficient numerical methods for the Brinkman equations using the weak Galerkin (WG) finite element approach. The Brinkman equations serve as a unified model for fluid flow in heterogeneous porous media, where the permeability coefficient exhibits significant spatial variation. Such variability leads to regions where the flow is governed predominantly by Darcy’s law, and others where Stokes flow dominates. In its simplified form, the Brinkman model aims to determine the velocity field $\bu$ and the pressure field $p$ satisfying the following equations:
\begin{equation}\label{model}
 \begin{split}
  -\mu \Delta \bu+\nabla p+\mu \kappa^{-1} \bu=&\bf, \qquad\qquad \text{in}\quad \Omega,\\
 \nabla\cdot  \bu=&0,\qquad\qquad \text{in}\quad \Omega,\\
 \bu=&\bg,\qquad\qquad \text{on}\quad \partial\Omega.
 \end{split}
 \end{equation}Here, $\mu$ denotes the dynamic viscosity of the fluid, and $\kappa$ represents the permeability tensor of the porous medium, which occupies a polygonal or polyhedral domain $\Omega \subset \mathbb{R}^d$ with spatial dimension $d = 2$ or $3$. The vector field $\mathbf{f}$ corresponds to a prescribed momentum source. For simplicity and without loss of generality, we consider the model in the case where $\mathbf{g} = 0$ and $\mu = 1$.
We assume that the permeability tensor $\kappa$ is symmetric and uniformly positive definite. Specifically, there exist two positive constants $\lambda_1, \lambda_2 > 0$ such that
 $$
 \lambda_1\xi^T\xi\leq \xi^T \kappa^{-1}\xi\leq \lambda_2\xi^T\xi, \qquad \xi\in \mathbb R^d,
 $$where $\xi^T$ denotes the transpose of the vector $\xi$. For simplicity of analysis, we assume throughout the paper that $\kappa$ is constant. However, the analysis can be readily extended to accommodate variable functions without difficulty.

The variational formulation of the Brinkman problem \eqref{model} is stated as follows: Find $\bu \in [H_0^1(\Omega)]^d$ and $p \in L_0^2(\Omega)$ such that
\begin{equation}\label{weak}
 \begin{split}
     (\nabla \bu, \nabla \bv)-(\nabla \cdot\bv, p)+(\kappa^{-1} \bu, \bv)=&(\bf, \bv), \qquad \forall \bv\in [H_0^1(\Omega)]^d \\
     (\nabla \cdot\bu, q)=&0, \qquad \qquad\forall q\in L_0^2(\Omega),
 \end{split}
 \end{equation} 
  where  the Sobolev space $H_0^1(\Omega)$ is defined by $$H_0^1(\Omega)=\{w\in H^1(\Omega): w|_{\partial\Omega}=0\},$$ and the space of square-integrable functions with zero mean is given by $$L_0^2(\Omega)=\{q\in L^2(\Omega); \int_{\Omega} qdx=0\}.$$

The Brinkman equations \eqref{model} are widely employed to model fluid flow in porous media with embedded fractures. This model can also be viewed as an extension of the Stokes equations, which themselves serve as reliable approximations of the Navier–Stokes equations in the regime of low Reynolds numbers. Accurate modeling of fluid transport in such complex multiphysics environments is essential for a range of industrial and environmental applications, including the design of industrial filters, flow through open-cell foams, and fluid movement in naturally fractured or vuggy reservoirs.

In these scenarios, the permeability field often exhibits high contrast, resulting in significant spatial variation in flow velocity throughout the porous domain. From a mathematical perspective, the Brinkman model can be interpreted as a coupling of the Stokes and Darcy equations, with the governing behavior transitioning between these two regimes in different regions of the computational domain. This change in equation type presents a fundamental challenge for numerical simulation: the numerical method must remain stable and accurate across both the Darcy- and Stokes-dominated zones.

As shown in \cite{12}, standard finite element methods that are stable for one flow regime may perform poorly when applied to the other. For instance, when the flow becomes Darcy-dominated, the convergence rates of typically stable Stokes elements, such as the conforming $P_2–P_0$  element, the nonconforming Crouzeix–Raviart element, and the Mini element, tend to deteriorate. Conversely, in Stokes-dominated regimes, finite element spaces designed for Darcy flow, such as the lowest-order Raviart–Thomas element, also exhibit a loss in convergence accuracy \cite{12}.

A central challenge in the numerical solution of the Brinkman equations lies in the development of discretization schemes that are simultaneously stable for both the Darcy and Stokes regimes. This issue arises due to the fundamental difference in the nature of these two flow models and the change of type across the computational domain. In the literature, considerable effort has been devoted to addressing this challenge by modifying classical Stokes or Darcy finite element spaces to construct new elements that exhibit uniform stability for the Brinkman system. For instance, approaches based on Stokes-stable elements have been explored in \cite{b1}, while methods extending Darcy-stable elements are presented in \cite{11,12}.

The weak Galerkin (WG) finite element method offers a novel and flexible framework for the numerical approximation of partial differential equations (PDEs). It is formulated by interpreting differential operators in a weak sense, inspired by distribution theory, and is particularly well-suited for approximations involving discontinuous, piecewise polynomial functions. In contrast to classical methods, WG techniques reduce regularity requirements on trial and test spaces through the use of appropriately constructed weak derivatives and stabilizing terms.

Over the past decade, WG methods have been extensively developed and applied to a wide range of model problems, demonstrating robust performance and broad applicability in scientific computing; see, e.g., \cite{wg1, wg2, wg3, wg4, wg5, wg6, wg7, wg8, wg9, wg10, wg11, wg12, wg13, wg14, fedi, wg15, wg16, wg17, wg18, wg19, wg20, wg21, itera, wy3655}. A key feature of WG methods is their reliance on weak continuity and weak derivatives, enabling the design of schemes that naturally conform to the variational structure of PDEs. This intrinsic flexibility allows WG methods to maintain stability and accuracy across a broad class of problems, including those with complex geometries and mixed physical regimes.
In particular, WG methods have been proposed for the Brinkman equations, demonstrating promising stability and approximation properties under varying flow regimes \cite{wgbrink}.
 
This paper is the first in the literature to  introduce  a simplified formulation of the  WG  finite element method that accommodates both convex and non-convex polygonal or polyhedral elements in the finite element partition. This formulation builds upon a recently developed Stabilizer-Free WG framework, which has been successfully applied to a variety of partial differential equations, including the Poisson equation \cite{autosecon}, the biharmonic equation \cite{autobihar, autobihar2}, linear elasticity problems \cite{autoelas}, Stokes equations \cite{autostokes}, Maxwell equations \cite{automaxwell} and other PDE models \cite{autoft, autoelasinter, autoquadcurl}.
A central innovation of the proposed method lies in the elimination of explicit stabilizing terms through the use of higher-degree polynomials in the construction of weak gradient and weak divergence operators. This strategy retains the size and sparsity structure of the global stiffness matrix, while significantly reducing implementation complexity compared to traditional WG methods that rely on carefully designed stabilizers.
An important analytical tool in this framework is the use of bubble functions, which facilitate the extension of WG techniques to non-convex polytopal meshes, a notable advancement beyond existing stabilizer-free WG methods, which are generally restricted to convex element geometries.
Rigorous theoretical analysis is conducted to establish optimal-order error estimates for the proposed WG method in both the discrete $H^1$-norm and the $L^2$-norm, thereby confirming the accuracy and robustness of the approach.

 The remainder of this paper is organized as follows. Section 2 provides a concise review of the weak gradient and weak divergence operators, along with their discrete analogues. In Section 3, we introduce an Stabilizer-Free  WG scheme for the Brinkman equations that removes the need for explicit stabilization terms and supports general polytopal meshes, including non-convex elements. Section 4 is devoted to proving the existence and uniqueness of the solution for the proposed scheme. Section 5 derives the error equation associated with the WG formulation. Section 6 establishes optimal error estimates for the numerical solution in the discrete $H^1$-norm, and Section 7 extends the analysis to obtain convergence rates in the $L^2$-norm. Finally, Section 8 presents numerical experiments that demonstrate the accuracy, stability, and flexibility of the proposed method and validate the theoretical findings.

Throughout this paper, standard notations are used. Let  
 $D\subset \mathbb{R}^d$  denote an open, bounded domain with a Lipschitz continuous boundary. For any integer $s\geq 0$,  the inner product, seminorm, and norm in the Sobolev space  $H^s(D)$ are denoted by $(\cdot,\cdot)_{s,D}$, $|\cdot|_{s,D}$ and $\|\cdot\|_{s,D}$ respectively. When  $D=\Omega$, the subscript $D$ is omitted for brevity. Furthermore, when  $s=0$, the notations simplify to $(\cdot,\cdot)_D$, $|\cdot|_D$ and $\|\cdot\|_D$, respectively.

\section{Discrete Weak Gradient and Discrete Weak Divergence} 
This section reviews the definitions of the weak gradient and weak divergence operators, along with their corresponding discrete formulations, as originally introduced in \cite{autostokes}.

Let $T$ be a polytopal element with boundary  $\partial T$. A weak function on $T$  is defined as a pair  $\bv=\{\bv_0, \bv_b\}$, where $\bv_0\in [L^2(T)]^d$ represents the interior values and $\bv_b\in [L^{2}(\partial T)]^d$ represents the boundary values. Importantly, 
  $\bv_b$ is not required to coincide with the trace of $\bv_0$ on $\partial T$.  
 
The space of all weak functions on   $T$, denote by  $W(T)$, is given by
 \begin{equation*}\label{2.1}
 W(T)=\{\bv=\{\bv_0, \bv_b\}: \bv_0\in [L^2(T)]^d, \bv_b\in [L^{2}(\partial
 T)]^d\}.
\end{equation*}
 
 The weak gradient $\nabla_w\bv$ is a linear operator that maps
 $W(T)$ into the dual space of $[H^1(T)]^{d\times d}$. For any
 $\bv\in W(T)$, the weak gradient is defined by  
 \begin{equation*} 
  (\nabla_w\bv, \bvarphi)_T=-(\bv_0, \nabla \cdot \bvarphi)_T+
  \langle \bv_b,   \bvarphi \cdot \bn \rangle_{\partial T},\quad \forall \bvarphi\in [H^1(T)]^{d\times d},
  \end{equation*}
 where $\bn$  denotes the outward unit normal vector to $\partial T$, with components $n_i (i=1,\cdots, d)$.

Similarly, the weak divergence $\nabla_w\cdot \bv$ is a linear
 operator mapping 
 $W(T)$ into the dual space of  $H^1(T)$, defined as
 \begin{equation*} 
  (\nabla_w\cdot \bv, w)_T=-(\bv_0, \nabla  w)_T+
  \langle \bv_b\cdot\bn,  w \rangle_{\partial T},\quad \forall w\in H^1(T).
  \end{equation*} 
 
 For any non-negative integer $r\ge 0$, let $P_r(T)$ denote the space of polynomials of total degree at most 
 $r$.   The discrete weak gradient 
 $\nabla_{w, r, T}\bv$ is defined as the unique polynomial in $[P_r(T)]^{d\times d}$ satisfying
 \begin{equation}\label{2.4}
(\nabla_{w, r, T}\bv, \bvarphi)_T=-(\bv_0, \nabla \cdot \bvarphi)_T+
  \langle \bv_b,   \bvarphi \cdot \bn \rangle_{\partial T},\quad \forall \bvarphi \in [P_r(T)]^{d\times d}.
  \end{equation}
If  $\bv_0\in
 [H^1(T)]^d$ is smooth,  then integration by parts applied to the first term in \eqref{2.4} yields an equivalent formulation:
 \begin{equation}\label{2.4new}
(\nabla_{w, r, T}\bv, \bvarphi)_T= (\nabla \bv_0,  \bvarphi)_T+
  \langle \bv_b-\bv_0,   \bvarphi \cdot \bn \rangle_{\partial T}, \quad \forall \bvarphi \in [P_r(T)]^{d\times d}.
  \end{equation}

 The discrete weak divergence
 $\nabla_{w, r, T}\cdot\bv$   is defined as the unique polynomial in $P_r(T)$ satisfying
 \begin{equation}\label{div}
(\nabla_{w, r, T}\cdot \bv, w)_T=-(\bv_0, \nabla  w)_T+
  \langle \bv_b\cdot\bn,  w \rangle_{\partial T},\quad \forall w\in P_r(T).
  \end{equation}
Again, if $\bv_0\in
 [H^1(T)]^d$ is smooth, an integration by parts yields the equivalent expression:
 \begin{equation}\label{divnew}
(\nabla_{w, r, T}\cdot \bv, w)_T= (\nabla   \cdot\bv_0,  w)_T+
  \langle (\bv_b-\bv_0)\cdot\bn,  w \rangle_{\partial T},\quad \forall w\in P_r(T).
  \end{equation}  

\section{Stabilizer-Free Weak Galerkin Algorithms }\label{Section:WGFEM}
 Let ${\cal T}_h$ be a finite element partition of the domain
 $\Omega\subset \mathbb R^d$ into polytopal elements that satisfy the shape regularity condition described in \cite{wy3655}. Denote by ${\mathcal E}_h$ the set of all edges (in 2D) or faces (in 3D) in
 ${\cal T}_h$, and let  ${\mathcal E}_h^0={\mathcal E}_h \setminus
 \partial\Omega$ represent the collection of interior edges or faces. For each element $T\in {\cal T}_h$, let $h_T$ denote its diameter, and define the mesh size as
 $$h=\max_{T\in {\cal
 T}_h}h_T.$$

 Let   $k\geq 1$ be a fixed integer. For each   $T\in\T_h$, the local weak finite element space is defined by
 \begin{equation*}
 V(k,   T)=\{\{\bv_0,\bv_b\}: \bv_0\in [P_k(T)]^d, \bv_b\in [P_{k}(e)]^d, e\subset \partial T\}.    
 \end{equation*}
The global weak finite element space $V_h$ is then constructed by assembling the local spaces $V(k, T)$ over all  $T\in {\cal T}_h$, with the condition that the boundary component  $\bv_b$ is single-valued across interior edges or faces, i.e.,
\an{\label{V-h}
 V_h=\big\{\{\bv_0,\bv_b\}:\ \{\bv_0,\bv_b\}|_T\in V(k, T),
 \forall T\in {\cal T}_h \big\}. }
The subspace of $V_h$ consisting of functions with vanishing boundary values on $\partial\Omega$ is defined as:
$$
V_h^0=\{\bv\in V_h: \bv_b|_{\partial\Omega}=0\}.
$$

For the pressure variable, the corresponding  finite element space is given by 
\an{\label{W-h}
W_h=\{q\in L_0^2(\Omega): q|_T \in P_{k-1}(T)\}. }

For notational simplicity, the discrete weak gradient  $\nabla_{w} \bv$  and discrete weak divergence $\nabla_{w} \cdot\bv$ refer to the element-wise defined operators 
$\nabla_{w, r, T}\bv$ and $\nabla_{w, r, T} \cdot\bv$, as introduced in equations \eqref{2.4} and \eqref{div}, respectively:
\an{\label{wg-k}
(\nabla_{w} \bv)|_T & = \nabla_{w, r, T}(\bv |_T), \qquad \forall T\in \T_h,  \\
\label{wd-k}
(\nabla_{w}\cdot \bv)|_T &= \nabla_{w, r, T}\cdot (\bv |_T), \qquad \forall T\in \T_h. }
 
On each element $T\in\T_h$, let $Q_0$  denote the $L^2$ projection onto $P_k(T)$. Similarly, on each edge or face $e\subset\partial T$, let $Q_b$ denote the $L^2$ projection onto $P_{k}(e)$. Then, for any $\bv\in [H^1(\Omega)]^d$,  the $L^2$ projection  into the weak finite element space $V_h$ is defined locally by
 $$
  (Q_h \bv)|_T:=\{Q_0(\bv|_T),Q_b(\bv|_{\pT})\},\qquad \forall T\in\T_h.
$$

We now present a simplified  WG numerical scheme for solving the Brinkman equations \eqref{model}, which eliminates the need for stabilization terms.
\begin{algorithm}\label{PDWG1}
Find $\bu_h=\{\bu_0, \bu_b\} \in V_h^0$ and $p_h\in W_h$  such that  
\begin{equation}\label{WG}
\begin{split}
  (\nabla_w \bu_h, \nabla_w \bv_h)-(\nabla_w \cdot\bv_h, p_h)+(\kappa^{-1} \bu_0,\bv_0)=&(\bf, \bv_0), \qquad \forall \bv_h\in V_h^0,\\
     (\nabla_w \cdot\bu_h, q_h)=&0, \qquad\qquad \forall q_h\in W_h, 
\end{split}
\end{equation}
where   the inner product is understood as the sum over all elements:
$$
(\cdot, \cdot)=\sum_{T\in {\cal T}_h}(\cdot, \cdot)_T.
$$
\end{algorithm}

\section{Solution Existence and Uniqueness}  
Let ${\cal T}_h$ be a shape-regular finite element mesh of the domain $\Omega$. For any element $T\in {\cal T}_h$ and any function $\phi\in H^1(T)$, the following trace inequality holds  (see \cite{wy3655}):
\begin{equation}\label{tracein}
 \|\phi\|^2_{\partial T} \leq C(h_T^{-1}\|\phi\|_T^2+h_T \|\nabla \phi\|_T^2).
\end{equation}
Moreover, if $\phi$ is a polynomial function defined on $T$, a simplified version of the trace inequality applies (see \cite{wy3655}): 
\begin{equation}\label{trace}
\|\phi\|^2_{\partial T} \leq Ch_T^{-1}\|\phi\|_T^2.
\end{equation}

For any function $\bv=\{\bv_0, \bv_b\}\in V_h$, we define the following norm:
\begin{equation}\label{3norm}
\3bar \bv\3bar= \Big( \sum_{T\in {\cal T}_h}(\nabla_{w} \bv, \nabla_{ w} \bv)_T+(\kappa^{-1} \bv_0,\bv_0)_T \Big) ^{\frac{1}{2}},
\end{equation}
along with a discrete $H^1$ norm given by:
\begin{equation}\label{disnorm}
\|\bv\|_{1, h}=\Big( \sum_{T\in {\cal T}_h} \|\nabla \bv_0\|_T^2+(\kappa^{-1} \bv_0,\bv_0)_T+h_T^{-1}\|\bv_0-\bv_b\|_{\partial T}^2 \Big)^{\frac{1}{2}}.
\end{equation}

\begin{lemma}\cite{autosecon}\label{norm1}
 For $\bv=\{\bv_0, \bv_b\}\in V_h$, there exists a constant $C$ such that
 $$
 \|\nabla 
 \bv_0\|_T\leq C\|\nabla_w \bv\|_T.
 $$
\end{lemma}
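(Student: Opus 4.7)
The plan is to insert judiciously chosen polynomial test tensors into the alternative form \eqref{2.4new} of the discrete weak gradient. The most natural first choice is $\bvarphi = \nabla \bv_0 \in [P_{k-1}(T)]^{d\times d}$, which is contained in $[P_r(T)]^{d\times d}$ whenever $r \geq k-1$. Substituting produces the identity
$$\|\nabla \bv_0\|_T^2 = (\nabla_w \bv, \nabla \bv_0)_T - \langle \bv_b - \bv_0, \nabla \bv_0 \cdot \bn \rangle_{\partial T}.$$
Cauchy--Schwarz on the volume term gives $\|\nabla_w \bv\|_T \|\nabla \bv_0\|_T$, while Cauchy--Schwarz combined with the polynomial trace inequality \eqref{trace} applied to $\nabla \bv_0 \cdot \bn$ contributes at most $Ch_T^{-1/2}\|\bv_b - \bv_0\|_{\partial T}\|\nabla \bv_0\|_T$. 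Dividing through by $\|\nabla \bv_0\|_T$ (the case $\nabla \bv_0 = 0$ being trivial) yields the preliminary bound
$$\|\nabla \bv_0\|_T \leq \|\nabla_w \bv\|_T + Ch_T^{-1/2}\|\bv_b - \bv_0\|_{\partial T}.$$

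The second task is to control the remaining jump $h_T^{-1/2}\|\bv_b - \bv_0\|_{\partial T}$ in terms of $\|\nabla_w \bv\|_T$. My strategy is to construct, for each face $e \subset \partial T$, a polynomial tensor bubble $\bvarphi_e \in [P_r(T)]^{d\times d}$ satisfying the reproducing property $\bvarphi_e \cdot \bn|_e = \bv_b - \bv_0$ and $\bvarphi_e \cdot \bn|_{e'} = 0$ for $e' \neq e$, together with the interior stability estimate $\|\bvarphi_e\|_T \leq Ch_T^{1/2}\|\bv_b - \bv_0\|_e$. Inserting the aggregate $\bvarphi^\ast = \sum_{e \subset \partial T}\bvarphi_e$ into \eqref{2.4new}, applying Cauchy--Schwarz, and invoking the bubble stability estimate yields an inequality of the form
$$h_T^{-1/2}\|\bv_b - \bv_0\|_{\partial T} \leq C(\|\nabla_w \bv\|_T + \|\nabla \bv_0\|_T).$$
Combining this with the preliminary bound and performing a Young-type absorption of the residual $\|\nabla \bv_0\|_T$ term on the left-hand side (which is legitimate provided the bubble construction is sufficiently accurate, a condition enforced by choosing the target polynomial degree $r$ large enough in the weak-gradient definition) delivers the asserted estimate.

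The principal obstacle is the construction of $\bvarphi_e$ when the polytopal element $T$ is non-convex, for then classical face bubble functions on $T$ may lack the required support and sign structure. The remedy, which is the hallmark of the bubble-enhanced stabilizer-free framework, is to introduce a shape-regular simplicial sub-partition of $T$ and define $\bvarphi_e$ as a polynomial supported on the sub-simplex sharing $e$ with $\partial T$, extended by zero elsewhere. This forces $r$ to be taken strictly larger than the minimal algebraic value $k-1$, but it does not disturb the size or sparsity of the global stiffness matrix, since the weak gradient remains a local, element-by-element operator.
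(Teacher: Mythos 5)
Your overall strategy does not close, and it is not the route the paper (via the cited reference and the Remark following the lemma) takes. The intended proof uses the \emph{element} bubble $\Phi_B=l_1^2\cdots l_N^2\in P_{2N}(T)$, which vanishes on all of $\partial T$: testing \eqref{2.4new} with $\bvarphi=\Phi_B\nabla\bv_0\in[P_r(T)]^{d\times d}$ (this is why $r=2N+k-1$) makes the boundary term $\langle\bv_b-\bv_0,\bvarphi\cdot\bn\rangle_{\partial T}$ vanish identically, giving $(\nabla_w\bv,\Phi_B\nabla\bv_0)_T=\int_T\Phi_B|\nabla\bv_0|^2\,dx$; the norm equivalence $\|\nabla\bv_0\|_T^2\le C\int_T\Phi_B|\nabla\bv_0|^2\,dx$ (valid since $\Phi_B\ge\rho_0$ on $\hat T\subset T$ and $\nabla\bv_0$ is a polynomial) together with Cauchy--Schwarz then yields the claim with no jump term ever appearing. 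Your plan instead produces $\|\nabla\bv_0\|_T\le\|\nabla_w\bv\|_T+Ch_T^{-1/2}\|\bv_b-\bv_0\|_{\partial T}$ and separately $h_T^{-1/2}\|\bv_b-\bv_0\|_{\partial T}\le C'(\|\nabla_w\bv\|_T+\|\nabla\bv_0\|_T)$; substituting gives $\|\nabla\bv_0\|_T\le(1+CC')\|\nabla_w\bv\|_T+CC'\|\nabla\bv_0\|_T$, and the absorption you invoke requires $CC'<1$. There is no mechanism to enforce this: $C$ and $C'$ come from trace and inverse inequalities whose constants do not shrink as $r$ grows (if anything they grow with the polynomial degree), so the argument is circular. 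Note also that the paper's logic runs in the opposite direction: in Lemma \ref{normeqva} the jump bound $h_T^{-1}\|\bv_b-\bv_0\|_{\partial T}^2\le C\|\nabla_w\bv\|_T^2$ is \emph{derived from} Lemma \ref{norm1}, not used to prove it.

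A secondary but independent flaw is your face-bubble construction. A function equal to a nonzero polynomial on one sub-simplex of $T$ and extended by zero elsewhere is not a polynomial on $T$ (a polynomial vanishing on an open set vanishes identically), so such a $\bvarphi_e$ is not an admissible test function in \eqref{2.4new}, whose test space is $[P_r(T)]^{d\times d}$. The admissible face bubble in this framework is the global polynomial $\varphi_{e_i}=\prod_{k\ne i}l_k^2$, which vanishes on the other faces but is supported on all of $T$, and it delivers only the weighted lower bound $\int_{e_i}|\bv_b-\bv_0|^2\varphi_{e_i}\,ds\ge\rho_1$-type control used in Lemma \ref{normeqva} --- not the exact reproducing property $\bvarphi_e\cdot\bn|_{e}=\bv_b-\bv_0$ you posit. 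Neither defect is fatal to the lemma itself, but both must be repaired by switching to the element-bubble argument above.
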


\begin{remark}
Consider an element  $T\in {\cal T}_h$, which is a general (not necessarily convex) polytopal cell with $N$ edges or faces labeled  $e_1, \cdots, e_N$.   For each edge  or face    $e_i$, we define a linear function  $l_i(x)$ satisfying  $l_i(x)=0$ on $e_i$.
We define the bubble function associated with the element $T$ as:
   $$
  \Phi_B =l^2_1(x)l^2_2(x)\cdots l^2_N(x) \in P_{2N}(T).
  $$ 
  By construction, $\Phi_B$ vanishes on the boundary $\partial T$. This function can be scaled so that $\Phi_B(M) = 1$, where $M$ represents the barycenter of $T$. Moreover, there exists a subregion $\hat{T} \subset T$ such that $\Phi_B \geq \rho_0$ for some positive constant $\rho_0$. Under these conditions, we choose $\nabla _w\bv\in [P_r(T)]^d$,  where  $r = 2N + k - 1$ as stated in  Lemma \ref{norm1}.
   
   In the special case where the polytopal element $T$ is convex, the bubble function used in Lemma \ref{norm1} can be simplified to:
 $$
 \Phi_B =l_1(x)l_2(x)\cdots l_N(x).
 $$ 
This simplified bubble function also satisfies $\Phi_B = 0$ on $\partial T$, and there exists a subdomain $\hat{T} \subset T$ such that $\Phi_B \geq \rho_0$ for some constant $\rho_0 > 0$. In the convex case, we  choose $\nabla _w\bv\in [P_r(T)]^d$,  where  $r = N + k - 1$ in   Lemma \ref{norm1}.
\end{remark}

Recall that $T$ denotes a $d$-dimensional polytopal element and $e_i$ represents one of its $(d-1)$-dimensional edges or faces. For each such face $e_i$, we define the corresponding edge/face bubble function by: $$\varphi_{e_i}= \Pi_{k=1, \cdots, N, k\neq i}l_k^2(x).$$ This function satisfies two important properties:
(1)
$\varphi_{e_i}$ vanishes on every edge or face $e_k$ with $k \ne i$; (2) there exists a subregion $\widehat{e_i} \subset e_i$ where $\varphi_{e_i} \geq \rho_1$ for some constant $\rho_1 > 0$.

\begin{lemma}\cite{autostokes}\label{phi}
    Let  $\bv=\{\bv_0, \bv_b\}\in V_h$. Define $\bvarphi=(\bv_b-\bv_0)^T \bn\varphi_{e_i}$, where $\bn$ is the outward unit normal vector to  $e_i$. Then the following inequality holds:
\begin{equation}
  \|\bvarphi\|_T ^2 \leq Ch_T \int_{e_i}|\bv_b-\bv_0|^2ds.
\end{equation}
\end{lemma}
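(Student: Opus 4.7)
The proof rests on two ingredients: a polynomial constant-extension of the boundary data $(\bv_b-\bv_0)|_{e_i}$ from $e_i$ into the interior of $T$, and a uniform pointwise bound on the face bubble function $\varphi_{e_i}=\prod_{k\neq i} l_k^2$. The strategy is first to interpret $\bvarphi$ as a polynomial on $T$ via this extension, then to peel off the factor $\varphi_{e_i}$ by an $L^\infty$ bound so that what remains is a pure scaling estimate transferring the $L^2$ mass on $T$ to an $L^2$ mass on $e_i$.

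First I would take the trace $(\bv_b-\bv_0)|_{e_i}\in [P_k(e_i)]^d$ and extend it to a polynomial of the same degree on all of $T$ by making it constant in the direction normal to the affine hull of $e_i$. With this convention, $\bvarphi$ becomes a polynomial on $T$, and a standard scaling argument yields
$$
\|\bv_b-\bv_0\|_T^2 \le C h_T \|\bv_b-\bv_0\|_{e_i}^2,
$$
with the factor $h_T$ arising precisely as the thickness of $T$ perpendicular to $e_i$ (guaranteed by shape regularity). Next, since each $l_k$ is a linear function vanishing on face $e_k$, shape regularity gives $|l_k(x)|\le C h_T$ on $T$ (up to whatever normalization is adopted for $l_k$), so $\|\varphi_{e_i}\|_{L^\infty(T)}$ is uniformly bounded by a constant. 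Combining these two facts with $|\bn|=1$ gives
$$
\|\bvarphi\|_T^2 \le \|\varphi_{e_i}\|_{L^\infty(T)}^2\,\|\bv_b-\bv_0\|_T^2 \le C h_T \int_{e_i} |\bv_b-\bv_0|^2\,ds,
$$
which is the claimed inequality.

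The main subtlety is the correct interpretation of $(\bv_b-\bv_0)$ as a polynomial on the interior of $T$: since $\bv_b$ originally lives only on $e_i$, one must supply a polynomial extension to $T$, and the constant-normal extension is the natural choice under which the scaling estimate above actually holds. Without specifying such an extension the inequality would go the wrong way, since the trace inequality \eqref{trace} only provides the reverse bound $\|q\|_{e_i}^2 \le C h_T^{-1}\|q\|_T^2$ for generic polynomials $q$ on $T$. The non-convexity of $T$ plays no essential role in the argument, since only shape regularity and pointwise bounds on the linear factors $l_k$ are used; the same reasoning applies uniformly to convex and non-convex polytopes.
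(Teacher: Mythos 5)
Your argument is correct, and in fact the paper itself gives no proof of this lemma --- it is imported verbatim from \cite{autostokes} --- so there is nothing internal to compare against; your writeup supplies the standard justification one would expect to find there. Two small points are worth making explicit. First, your identification of the extension is the right one and matches the convention the paper sets up in the proof of Lemma \ref{normeqva}: $\bv_b-\bv_0$ inside $\bvarphi$ means the constant-in-normal-direction extension of the trace difference from $e_i$, so $\bvarphi$ is indeed a polynomial on $T$ and your warning that the naive trace inequality \eqref{trace} points the wrong way is exactly the reason this extension must be specified. Second, in the scaling step $\|\bv_b-\bv_0\|_T^2\le C h_T\|\bv_b-\bv_0\|_{e_i}^2$, integrating over the fibers normal to $e_i$ actually produces $h_T\int_{D}|q|^2$, where $D=\mathrm{Proj}_{e_i}(T)$ is the shadow of $T$ on the hyperplane containing $e_i$; for a non-convex $T$ this set can strictly contain $e_i$, so one additional (routine) ingredient is the equivalence of $L^2$ norms of a fixed-degree polynomial on $D$ and on $e_i$, which follows from shape regularity since both are $(d-1)$-dimensional sets of diameter comparable to $h_T$ with $e_i$ of nondegenerate measure. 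With that one sentence added, the proof is complete; the $L^\infty$ bound on $\varphi_{e_i}$ and the observation that $|\bn|=1$ leaves the pointwise norm of the outer product equal to $|\bv_b-\bv_0|\,\varphi_{e_i}$ are both fine as stated.
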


\begin{lemma}\label{normeqva} There exist constants $C_1, C_2 > 0$ such that for all $\bv=\{\bv_0, \bv_b\} \in V_h$,   the norms $\|\cdot\|_{1, h}$ and $\3bar\cdot\3bar$ are equivalent:
 \begin{equation}\label{normeq}
 C_1\|\bv\|_{1, h}\leq \3bar \bv\3bar  \leq C_2\|\bv\|_{1, h}.
\end{equation}
\end{lemma}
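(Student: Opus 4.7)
The plan is to prove the two inequalities separately, making essential use of Lemma \ref{norm1} and the bubble-function machinery behind Lemma \ref{phi}.

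For the upper bound $\3bar\bv\3bar \leq C_2\|\bv\|_{1,h}$, I will test the identity \eqref{2.4new} with the admissible choice $\bvarphi = \nabla_w\bv\in [P_r(T)]^{d\times d}$. This gives
\[
\|\nabla_w\bv\|_T^2 = (\nabla\bv_0, \nabla_w\bv)_T + \langle \bv_b-\bv_0, \nabla_w\bv\cdot\bn\rangle_{\partial T}.
\]
Cauchy--Schwarz applied to each term, together with the polynomial trace inequality \eqref{trace} yielding $\|\nabla_w\bv\|_{\partial T}\leq Ch_T^{-1/2}\|\nabla_w\bv\|_T$, produces the local bound $\|\nabla_w\bv\|_T \leq C(\|\nabla\bv_0\|_T + h_T^{-1/2}\|\bv_0-\bv_b\|_{\partial T})$. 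Squaring, summing over $T\in\T_h$, and noting that the $(\kappa^{-1}\bv_0,\bv_0)$ term appears identically on both sides of \eqref{normeq} concludes this direction.

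For the lower bound $C_1\|\bv\|_{1,h}\leq \3bar\bv\3bar$, the control of $\|\nabla\bv_0\|_T$ is immediate from Lemma \ref{norm1}. The delicate piece is the jump $h_T^{-1}\|\bv_0-\bv_b\|_{\partial T}^2$. For each edge or face $e_i\subset\partial T$ I will extend the polynomial $(\bv_b-\bv_0)|_{e_i}\in [P_k(e_i)]^d$ to a polynomial $E_i \in [P_k(T)]^d$ on $T$ (for instance by freezing it in the normal direction to $e_i$) and test \eqref{2.4new} against the matrix-valued bubble function $\bvarphi_i := E_i\otimes \bn \,\varphi_{e_i}\in [P_r(T)]^{d\times d}$. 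Since $\varphi_{e_i}$ vanishes on $\partial T\setminus e_i$, only the $e_i$ contribution survives on the boundary, giving
\[
\int_{e_i}|\bv_b-\bv_0|^2\varphi_{e_i}\,ds = (\nabla_w\bv, \bvarphi_i)_T - (\nabla\bv_0, \bvarphi_i)_T.
\]

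The main obstacle is converting the weighted left-hand side into a genuine $L^2(e_i)$ norm. Since $\bv_b-\bv_0\in [P_k(e_i)]^d$ has bounded degree and $\varphi_{e_i}\geq \rho_1 > 0$ on the positive-measure subregion $\widehat{e_i}\subset e_i$, a standard finite-dimensional norm equivalence on $[P_k(e_i)]^d$ gives $\int_{e_i}|\bv_b-\bv_0|^2\varphi_{e_i}\,ds \geq c\|\bv_b-\bv_0\|^2_{e_i}$. On the right-hand side, Cauchy--Schwarz together with the $L^2(T)$ bound $\|\bvarphi_i\|_T \leq Ch_T^{1/2}\|\bv_b-\bv_0\|_{e_i}$ (the vector-valued analogue of Lemma \ref{phi}, obtained componentwise via the shape-regularity and polynomial-extension scaling) yields $c\|\bv_b-\bv_0\|_{e_i}^2 \leq Ch_T^{1/2}(\|\nabla_w\bv\|_T+\|\nabla\bv_0\|_T)\|\bv_b-\bv_0\|_{e_i}$. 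Dividing by $\|\bv_b-\bv_0\|_{e_i}$, squaring, multiplying by $h_T^{-1}$, summing over the faces of $\partial T$ and then over $T\in\T_h$, and finally invoking Lemma \ref{norm1} to absorb $\|\nabla\bv_0\|_T$ into $\|\nabla_w\bv\|_T$, completes the lower bound.
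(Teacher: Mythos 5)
Your proposal is correct and follows essentially the same route as the paper: the upper bound via \eqref{2.4new}, Cauchy--Schwarz and the polynomial trace inequality \eqref{trace}; the lower bound via the face-bubble test function $(\bv_b-\bv_0)\otimes\bn\,\varphi_{e_i}$ combined with the finite-dimensional norm equivalence on $e_i$, the scaling bound of Lemma \ref{phi}, and Lemma \ref{norm1} to absorb $\|\nabla\bv_0\|_T$. No substantive differences or gaps.
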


\begin{proof}   
Let $T$ be a (possibly non-convex) polytopal element. As defined previously, the bubble function associated with edge/face $e_i$ is
$$\varphi_{e_i}= \Pi_{k=1, \cdots, N, k\neq i}l_k^2(x).$$

To proceed, we extend the function $\bv_b$, initially defined only on the $(d-1)$-dimensional face $e_i$, to the full $d$-dimensional element $T$. This extension is given by:
 $$
   \bv_b (X)= \bv_b(\text{Proj}_{e_i} (X)),
  $$
  where $\text{Proj}{e_i}(X)$ denotes the orthogonal projection of a point $X \in T$ onto the hyperplane $H \subset \mathbb{R}^d$ containing $e_i$. When $\text{Proj}_{e_i}(X) \notin e_i$, $\bv_b$ is taken as a suitable extension from $e_i$ to $H$.

Similarly, let $\bv_{trace}$ denote the trace of $\bv_0$ on $e_i$, and extend it to the entire element $T$ in a comparable manner. For simplicity, both extensions are still denoted as $\bv_b$ and $\bv_0$, respectively.

Now, using the test function $\bvarphi=(\bv_b-\bv_0)^T \bn\varphi_{e_i}$ in \eqref{2.4new}, we obtain 
\begin{equation}\label{t3} 
 \begin{split}
  (\nabla_{w} \bv, \bvarphi)_T=&(\nabla \bv_0, \bvarphi)_T+
  {  \langle \bv_b-\bv_0,  \bvarphi\cdot \bn \rangle_{\partial T}} \\=&(\nabla \bv_0, \bvarphi)_T+ \int_{e_i} |\bv_b-\bv_0|^2 \varphi_{e_i}ds,   
 \end{split}
  \end{equation} 
 We used the following properties of the bubble function:
(1) $\varphi_{e_i} = 0$ on each $e_k$ for $k \ne i$, and
(2) there exists a subdomain $\widehat{e_i} \subset e_i$ such that $\varphi_{e_i} \geq \rho_1$ for some constant $\rho_1 > 0$.

 Applying the Cauchy–Schwarz inequality, \eqref{t3}, the domain inverse inequality from \cite{wy3655}, and Lemma \ref{phi}, we obtain:
\begin{equation*}
\begin{split}
 \int_{e_i}|\bv_b-\bv_0|^2  ds\leq &C  \int_{e_i}|\bv_b-\bv_0|^2  \varphi_{e_i}ds \\
 \leq & C(\|\nabla_w \bv\|_T+\|\nabla \bv_0\|_T)\| \bvarphi\|_T\\
 \leq & {Ch_T^{\frac{1}{2}} (\|\nabla_w \bv\|_T+\|\nabla \bv_0\|_T) (\int_{e_i}(|\bv_0-\bv_b|^2ds)^{\frac{1}{2}}}.
 \end{split}
\end{equation*}
Using Lemma \ref{norm1}, we then derive:
$$
 h_T^{-1}\int_{e_i}|\bv_b-\bv_0|^2  ds \leq C  (\|\nabla_w \bv\|^2_T+\|\nabla \bv_0\|^2_T)\leq C\|\nabla_w \bv\|^2_T.
$$
Combining this estimate with Lemma \ref{norm1}, as well as equations \eqref{3norm} and \eqref{disnorm}, we conclude:
$$
 C_1\|\bv\|_{1, h}\leq \3bar \bv\3bar.
$$
 
Next, we apply identity \eqref{2.4new}, the Cauchy–Schwarz inequality, and the trace inequality \eqref{trace}. This yields:
$$
 \Big|(\nabla_{w} \bv, \bvarphi)_T\Big| \leq \|\nabla \bv_0\|_T \|  \bvarphi\|_T+
Ch_T^{-\frac{1}{2}}\|\bv_b-\bv_0\|_{\partial T} \| \bvarphi\|_{T},
$$
which implies
$$
\| \nabla_{w} \bv\|_T^2\leq C( \|\nabla \bv_0\|^2_T  +
 h_T^{-1}\|\bv_b-\bv_0\|^2_{\partial T}).
$$
Hence, $$ \3bar \bv\3bar  \leq C_2\|\bv\|_{1, h}.$$

 This completes the proof.
 \end{proof}

  \begin{remark}
  If the polytopal element $T$ is convex, the edge/face bubble function in Lemma \ref{normeqva} can be simplified to
$$\varphi_{e_i}= \Pi_{k=1, \cdots, N, k\neq i}l_k(x).$$
It can be readily verified that: (1) $\varphi_{e_i} = 0$ on $e_k$ for $k \ne i$, and
(2) there exists a subdomain $\widehat{e_i} \subset e_i$ such that $\varphi_{e_i} \geq \rho_1$ for some constant $\rho_1 > 0$.

Therefore, Lemma \ref{normeqva} holds with the same proof under this simplified construction.
\end{remark}

Let ${\cal Q}_h$ denote the $L^2$ projection operator onto the local finite element space of piecewise polynomials of degree at most $2N + k - 1$ on non-convex elements and $N + k - 1$ on convex elements in the finite element partition.

\begin{lemma}\label{Lemma5.1} \cite{autostokes}  For any $\bu \in [H^1(T)]^d$, the following identities hold:
\begin{equation}\label{pro1}
\nabla_{w}\bu ={\cal Q}_h(\nabla \bu), 
\end{equation}
\begin{equation}\label{pro2}
\nabla_{w} \cdot \bu ={\cal Q}_h(\nabla \cdot \bu),  
\end{equation}
\begin{equation}\label{pro3}
\nabla_w\cdot Q_h\bu={\cal Q}_h(\nabla \cdot \bu).
\end{equation}
\begin{equation}\label{pro4}
\nabla_w  Q_h\bu={\cal Q}_h(\nabla  \bu).
\end{equation}
\end{lemma}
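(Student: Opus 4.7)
The strategy is to verify each of the four identities by applying the definitions of the discrete weak operators together with the orthogonality properties of the relevant $L^2$ projections, starting with the two identities involving the smooth function $\bu$ and then using those to handle the identities involving $Q_h \bu$.

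For \eqref{pro1}, I would regard $\bu \in [H^1(T)]^d$ as the weak function $\{\bu|_T, \bu|_{\partial T}\}$, for which the interior trace and the boundary value coincide. The equivalent formulation \eqref{2.4new} then simplifies immediately to $(\nabla_w \bu, \bvarphi)_T = (\nabla \bu, \bvarphi)_T$ for every $\bvarphi \in [P_r(T)]^{d\times d}$, because $\bv_b - \bv_0 = 0$ on $\partial T$. Since $\nabla_w \bu$ itself lies in $[P_r(T)]^{d\times d}$, this identifies it as the $L^2$ projection $\mathcal{Q}_h(\nabla \bu)$, establishing \eqref{pro1}. Identity \eqref{pro2} is obtained by precisely the same argument applied to \eqref{divnew}.

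For \eqref{pro3}, I would apply the primal definition \eqref{div} to $Q_h \bu = \{Q_0 \bu, Q_b \bu\}$, giving
\begin{equation*}
(\nabla_w \cdot Q_h \bu, w)_T = -(Q_0 \bu, \nabla w)_T + \langle Q_b \bu \cdot \bn, w \rangle_{\partial T}
\end{equation*}
for every $w \in P_r(T)$. The goal is to show this right-hand side equals $(\nabla \cdot \bu, w)_T$, which by the standard Green identity reduces to the compatibility relation
\begin{equation*}
(\bu - Q_0 \bu, \nabla w)_T = \langle (\bu - Q_b \bu) \cdot \bn, w \rangle_{\partial T}.
\end{equation*}
This is the central step, and I would verify it by combining the defining orthogonality properties of $Q_0$ and $Q_b$ with an integration by parts on the interior term, exploiting the polynomial-degree relationship embedded in the construction of $V_h$ and $\mathcal{Q}_h$. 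Once this relation is established, the conclusion $\nabla_w \cdot Q_h \bu = \mathcal{Q}_h(\nabla \cdot \bu)$ follows immediately from \eqref{pro2}. Identity \eqref{pro4} is obtained analogously by testing with $\bvarphi \in [P_r(T)]^{d\times d}$ in \eqref{2.4}, applying Green's identity for matrix-valued test functions, and invoking \eqref{pro1}.

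The main obstacle is verifying the compatibility relation above for the full range of test polynomials in $P_r(T)$, since the degrees of $\nabla w$ and of $w|_e$ may exceed the polynomial degrees onto which $Q_0$ and $Q_b$ project. The identities \eqref{pro1}--\eqref{pro2} are essentially bookkeeping, but \eqref{pro3}--\eqref{pro4} hinge on this orthogonality-plus-integration-by-parts cancellation. I would therefore follow the corresponding argument of \cite{autostokes}, where the identity is proved in the stabilizer-free framework used here, adapting the polynomial-degree tracking to the present Brinkman setting with $r = 2N + k - 1$ (non-convex) or $r = N + k - 1$ (convex).
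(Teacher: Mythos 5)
The paper itself gives no proof of this lemma; it is quoted verbatim from \cite{autostokes}, so there is no in-paper argument to compare against. Your treatment of \eqref{pro1} and \eqref{pro2} is correct and standard: for a genuine $H^1$ function the interior and boundary components coincide, the boundary terms in \eqref{2.4new} and \eqref{divnew} drop out, and the defining relation of the discrete weak operator becomes exactly the defining relation of the $L^2$ projection onto the relevant polynomial space.

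The gap is in \eqref{pro3} and \eqref{pro4}. You correctly reduce \eqref{pro3} to the compatibility relation $(\bu-Q_0\bu,\nabla w)_T=\langle(\bu-Q_b\bu)\cdot\bn, w\rangle_{\partial T}$, but you never verify it, and that verification is the entire content of the identity. Moreover, the mechanism you describe --- an ``orthogonality-plus-integration-by-parts cancellation'' between the interior and boundary contributions --- is not what makes it work. When the relation holds, it is because \emph{each side vanishes separately}: the weak divergence is tested against $w\in P_r(T)$ with $r\le k$ (the paper takes $r=k-1$, matching $W_h$), so $\nabla w\in[P_{k-2}(T)]^d$ lies in the range of $Q_0$ and $w|_e\,\bn\in[P_{k-1}(e)]^d$ lies in the range of $Q_b$ on each flat face, and the two terms are killed individually by the projection orthogonalities; no integration by parts on the interior term is needed. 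For \eqref{pro4} the situation is worse: the weak gradient is tested against $\bvarphi\in[P_r(T)]^{d\times d}$ with $r=2N+k-1$ (or $N+k-1$), which strictly exceeds $k$, so $\nabla\cdot\bvarphi$ and $\bvarphi\cdot\bn|_e$ have degrees beyond the ranges of $Q_0$ and $Q_b$; neither term vanishes, and there is no reason for the two nonzero terms to cancel for an arbitrary $\bvarphi$ of that degree. Your proposed route therefore does not close for \eqref{pro4}, and deferring to ``the corresponding argument of \cite{autostokes}'' leaves unproven precisely the one identity whose validity depends delicately on the choice of $r$. To repair the argument you must either restrict the test-space degree so that the $Q_0$/$Q_b$ orthogonalities apply, or supply an explicit argument for the high-degree case rather than asserting it.
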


For the bilinear form $b(\cdot, \cdot)$, we establish the following inf-sup condition.
 \begin{lemma}  
     There exists a constant $C>0$,   independent of the mesh size $h$, such that for all $\zeta\in W_h$, 
     \begin{equation}\label{infsup}
         \sup_{\bv\in V_h^0} \frac{(\nabla_w \cdot \bv, \zeta)}{\3bar \bv\3bar}\geq C\|\zeta\|.
     \end{equation}
 \end{lemma}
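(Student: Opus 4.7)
The plan is to establish the discrete inf-sup condition via Fortin's trick, transferring the continuous inf-sup condition for the Stokes divergence to the WG setting through the projection $Q_h$ combined with the commutativity identity \eqref{pro3}. Specifically, given any $\zeta \in W_h \subset L_0^2(\Omega)$, the classical Bogovskii/inf-sup theorem on $[H_0^1(\Omega)]^d \times L_0^2(\Omega)$ produces $\tilde{\bv} \in [H_0^1(\Omega)]^d$ with $\nabla \cdot \tilde{\bv} = \zeta$ and $\|\tilde{\bv}\|_1 \leq C\|\zeta\|$. I would choose $\bv = Q_h \tilde{\bv} \in V_h^0$ as the Fortin test function and bound the quotient $(\nabla_w \cdot \bv, \zeta)/\3bar \bv \3bar$ from below.

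For the numerator, by identity \eqref{pro3} we have $(\nabla_w \cdot Q_h \tilde{\bv})|_T = \mathcal{Q}_h(\nabla \cdot \tilde{\bv})|_T$. Since $\zeta|_T \in P_{k-1}(T)$ already lies in the polynomial space onto which $\mathcal{Q}_h$ projects (namely $P_r(T)$ with $r = 2N+k-1$ in the non-convex case or $r = N+k-1$ in the convex case), the projection is inert under the $L^2$ pairing, giving $(\nabla_w \cdot Q_h \tilde{\bv}, \zeta)_T = (\nabla \cdot \tilde{\bv}, \zeta)_T$. Summation over $T \in \mathcal{T}_h$ then yields $(\nabla_w \cdot Q_h \tilde{\bv}, \zeta) = (\zeta, \zeta) = \|\zeta\|^2$.

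For the denominator, by the norm equivalence in Lemma \ref{normeqva} it suffices to bound $\|Q_h \tilde{\bv}\|_{1,h}$ defined in \eqref{disnorm}. The interior gradient term is controlled by the $H^1$-stability of $Q_0$; the $\kappa^{-1}$-weighted term by the uniform upper bound $\lambda_2$ on $\kappa^{-1}$ together with $L^2$-stability of $Q_0$ (and Poincaré on $\tilde{\bv}$); and the jump term $h_T^{-1/2}\|Q_0 \tilde{\bv} - Q_b \tilde{\bv}\|_{\partial T}$ via the triangle inequality $\|Q_0 \tilde{\bv}-Q_b \tilde{\bv}\|_{\partial T} \le \|Q_0\tilde{\bv}-\tilde{\bv}\|_{\partial T} + \|\tilde{\bv}-Q_b\tilde{\bv}\|_{\partial T}$, the trace inequality \eqref{tracein}, and standard approximation properties of the $L^2$ projections. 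These combine to give $\3bar Q_h \tilde{\bv} \3bar \leq C\|\tilde{\bv}\|_1 \leq C\|\zeta\|$. Substituting into the quotient yields $\sup_{\bv \in V_h^0} (\nabla_w \cdot \bv, \zeta)/\3bar \bv \3bar \geq \|\zeta\|^2/(C\|\zeta\|) = C^{-1}\|\zeta\|$, as required.

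The main subtlety to verify carefully is the interplay between the high polynomial degree $r$ used to define $\nabla_w \cdot$ and the much lower degree $k-1$ of the pressure test space $W_h$; one must confirm that \eqref{pro3} correctly produces the identity $(\nabla_w \cdot Q_h \tilde{\bv}, \zeta) = \|\zeta\|^2$ despite this mismatch. This is automatic because $W_h$ sits inside the range of $\mathcal{Q}_h$, so no orthogonality loss occurs in the $L^2$ pairing. A secondary technical point is controlling the $\kappa^{-1}$-weighted contribution in $\3bar Q_h \tilde{\bv} \3bar$, which has no direct counterpart in $\|\tilde{\bv}\|_1$, but this is harmless thanks to the uniform boundedness of $\kappa^{-1}$ encoded in the constant $\lambda_2$.
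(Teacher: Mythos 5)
Your proposal is correct and follows essentially the same route as the paper: both use $Q_h$ applied to the continuous inf-sup witness as the Fortin test function, invoke the commutativity identity \eqref{pro3} together with $\zeta|_T\in P_{k-1}(T)\subset P_r(T)$ for the numerator, and establish the stability bound $\3bar Q_h\tilde{\bv}\3bar\leq C\|\tilde{\bv}\|_1$ for the denominator. The only (harmless) difference is that the paper gets the gradient part of the stability bound directly from \eqref{pro4} and the contraction property of the $L^2$ projection, rather than passing through the norm equivalence and the jump terms of $\|\cdot\|_{1,h}$ as you do.
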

\begin{proof}
For any given $\zeta\in W_h\subset L_0^2(\Omega)$, it is well-known (see, e.g., \cite{2, 4, 5, 6, 7}) that there exists a vector function  $\bar{\bv}\in [H_0^1(\Omega)]^d$ such that
\begin{equation} \label{eqq}
\frac{(\nabla\cdot\bar{\bv}, \zeta)}{\|\bar{\bv}\|_1}\geq C\|\zeta\|,
\end{equation}
where the constant $C>0$ depends only on the domain $\Omega$. Define $\bv=Q_h\bar{\bv}\in V_h$.  We claim that 
\begin{equation}\label{err1}
    \3bar \bv\3bar \leq C\|\bar{\bv}\|_1,
\end{equation}
for some constant $C$. To prove this, we use identity \eqref{pro4}, which gives
$$
\sum_{T\in {\cal T}_h}\|\nabla_w \bv\|_T^2=\sum_{T\in {\cal T}_h}\|\nabla_w Q_h\bar{\bv}\|_T^2=\sum_{T\in {\cal T}_h}\|\cal{Q}_h\nabla  \bar{\bv}\|_T^2\leq \sum_{T\in {\cal T}_h}\| \nabla  \bar{\bv}\|_T^2.
$$
Also, we estimate 
\begin{equation*}
    \begin{split}
    \sum_{T\in {\cal T}_h} (\kappa^ {-1}\bv_0, \bv_0)_T =\sum_{T\in {\cal T}_h}(\kappa ^ {-1}Q_0\bar{\bv}, Q_0\bar{\bv})_T \leq     \sum_{T\in {\cal T}_h} \|\bar{\bv}\|_T^2.  
    \end{split}
\end{equation*}
Combining these inequalities gives the desired bound in \eqref{err1}.

Using identity \eqref{pro3}, we have for each $T$,
$$
(\nabla_w \cdot  \bv, \zeta)_T=(\nabla_w \cdot Q_h\bar{\bv}, \zeta)_T=( \cal{Q}_h \nabla \cdot\bar{\bv}, \zeta)_T=(  \nabla\cdot\bar{\bv}, \zeta)_T.
$$
Combining the estimate above with \eqref{eqq} and \eqref{err1}, we obtain
$$
\frac{(\nabla_w \cdot \bv, \zeta)_T}{\3bar \bv\3bar} \geq C \frac{(\nabla  \cdot \bar{\bv}, \zeta)_T}{ \| \bar{\bv}\|_1 } \geq C\|\zeta\|.
$$

This completes the proof of the Lemma.

\end{proof}
 
\begin{theorem} The Stabilizer-Free WG Algorithm \ref{PDWG1}  for the Brinkman  equations \eqref{model} admits a unique solution.
\end{theorem}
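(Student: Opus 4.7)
The plan is to exploit the standard saddle-point structure. Since the WG scheme \eqref{WG} is a finite-dimensional linear system with a square coefficient matrix (the number of unknowns equals the number of equations), existence of a solution is equivalent to uniqueness. It therefore suffices to show that the homogeneous problem $\bf=0$ admits only the trivial solution $\bu_h=0$, $p_h=0$.

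First I would test the first equation of \eqref{WG} with $\bv_h=\bu_h\in V_h^0$ and the second with $q_h=p_h\in W_h$, then subtract. The two pressure contributions $(\nabla_w\cdot\bu_h,p_h)$ and $(\nabla_w\cdot\bv_h,p_h)$ cancel exactly, leaving
\begin{equation*}
(\nabla_w \bu_h,\nabla_w \bu_h)+(\kappa^{-1}\bu_0,\bu_0)=0,
\end{equation*}
i.e., $\3bar\bu_h\3bar=0$. Invoking the norm equivalence established in Lemma \ref{normeqva}, this forces $\|\bu_h\|_{1,h}=0$, whence on every $T\in\T_h$ the three non-negative contributions to \eqref{disnorm} vanish. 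The $\kappa^{-1}$ term gives $\bu_0\equiv 0$ on $T$ (using uniform positive definiteness of $\kappa^{-1}$), and the jump term $h_T^{-1}\|\bu_0-\bu_b\|_{\partial T}^2=0$ combined with $\bu_0=0$ yields $\bu_b=0$ on $\partial T$. Thus $\bu_h\equiv 0$.

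With $\bu_h=0$, the first equation of the homogeneous system reduces to
\begin{equation*}
(\nabla_w\cdot \bv_h,p_h)=0,\qquad \forall\,\bv_h\in V_h^0.
\end{equation*}
Here I would apply the discrete inf-sup condition \eqref{infsup}: taking the supremum over $\bv_h\in V_h^0$ of the left-hand side divided by $\3bar\bv_h\3bar$ yields $0\geq C\|p_h\|$, and therefore $p_h=0$ in $W_h\subset L_0^2(\Omega)$.

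Essentially no step is genuinely hard because the heavy lifting—the coercivity-type property of $\3bar\cdot\3bar$ on $V_h^0$ and the inf-sup condition—has already been packaged in Lemma \ref{normeqva} and \eqref{infsup}. The only point requiring a bit of care is the passage from $\3bar\bu_h\3bar=0$ to $\bu_h=0$: without the $(\kappa^{-1}\bu_0,\bu_0)$ term the vanishing of $\3bar\cdot\3bar$ would still need to be upgraded to full $H^1$-control of $\bu_0$ via the norm equivalence and then combined with the zero boundary condition $\bu_b|_{\partial\Omega}=0$ to rule out nonzero constants, but the presence of the reaction term $\kappa^{-1}$ makes this immediate, which is the only place where the Brinkman (as opposed to Stokes) structure is essential to the uniqueness argument.
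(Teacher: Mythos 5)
Your proposal is correct and follows essentially the same route as the paper: test with $\bv_h=\bu_h$, $q_h=p_h$ to get $\3bar\bu_h\3bar=0$, invoke the norm equivalence of Lemma \ref{normeqva} to conclude $\bu_h\equiv 0$, then use the inf-sup condition \eqref{infsup} to kill $p_h$. Your observation that the $(\kappa^{-1}\bu_0,\bu_0)$ term yields $\bu_0=0$ directly (so the constant-propagation argument needed for Stokes is unnecessary) is a slight streamlining of the paper's argument, and your explicit remark that existence follows from uniqueness for the square linear system is a point the paper leaves implicit, but neither changes the substance of the proof.
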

\begin{proof}
Suppose there exist two distinct solutions $(\bu_h^{(1)}, p_h^{(1)})\in V_h^0\times W_h$ and $(\bu_h^{(2)}, p_h^{(2)})\in V_h^0\times W_h$ of the  Stabilizer-free WG scheme  \ref{PDWG1}. Define their difference as  $${\Xi}_{\bu_h}= \{{\Xi}_{\bu_0}, {\Xi}_{\bu_b}\}=\bu_h^{(1)}-\bu_h^{(2)}\in V_h^0, \qquad {\Xi}_{p_h}=p_h^{(1)}-p_h^{(2)}\in W_h.$$  Then the pair ${\Xi}_{\bu_h}$  and ${\Xi}_{p_h}$ satisfies the following:
\begin{equation}\label{wgstokes}
\begin{split}
  (\nabla_w {\Xi}_{\bu_h}, \nabla_w \bv_h)-(\nabla_w \cdot\bv_h, {\Xi}_{p_h})+(\kappa^{-1} {\Xi}_{\bu_0}, \bv_0)=&0, \qquad \qquad \forall \bv_h\in V_h^0,\\
     (\nabla_w \cdot {\Xi}_{\bu_h}, q_h)=&0, \qquad\qquad \forall q_h\in W_h.
\end{split}
\end{equation}
Choosing $\bv_h={\Xi}_{\bu_h}$ and $q_h={\Xi}_{p_h}$ in \eqref{wgstokes} yields 
$$\3bar {\Xi}_{\bu_h} \3bar=0.$$
Using the norm equivalence \eqref{normeq}, we conclude that $$\|{\Xi}_{\bu_h}\|_{1,h}=0,$$ which  implies:
1) $\nabla {\Xi}_{\bu_0}=0$ on each element  $T$;    2) ${\Xi}_{\bu_0}={\Xi}_{\bu_b}$ on each $\partial T$;  3) ${\Xi}_{\bu_0}=0$ on each $T$.  Since  $\nabla {\Xi}_{\bu_0}=0$ on each $T$,  it follows that
${\Xi}_{\bu_0}$ is constant on each element. Using the fact that ${\Xi}_{\bu_0}={\Xi}_{\bu_b}$ on each $\partial T$, it is continuous across element boundaries, and hence constant throughout $\Omega$. Given  ${\Xi}_{\bu_0}=0$ on each $T$, this constant must be zero, so ${\Xi}_{\bu_0}\equiv 0$ in   $\Omega$. Consequently, ${\Xi}_{\bu_b}\equiv 0$, implying ${\Xi}_{\bu_h}\equiv 0$.
  Substituting ${\Xi}_{\bu_h}\equiv 0$ into the first equation of \eqref{wgstokes} gives
$$
(\nabla_w\cdot \bv_h, {\Xi}_{p_h})=0, \qquad  \forall \bv_h\in V_h^0.
$$
By the inf-sup condition \eqref{infsup}, this implies
$\|{\Xi}_{p_h}\|=0$, i.e., $ {\Xi}_{p_h} \equiv  0$.

Thus, we conclude that
 $\bu_h^{(1)}\equiv \bu_h^{(2)}$ and $p_h^{(1)}\equiv p_h^{(2)}$. This proves the uniqueness of the solution and completes the proof of the Theorem.
\end{proof}

\section{Error Equations}
Let  $\bu$ and $p$ denote the exact solutions of the Brinkman equations \eqref{model}, and let  
  $\bu_h \in V_{h}^0$ and $p_h\in W_h$ be their numerical approximations obtained via the  WG  scheme \ref{PDWG1}.   We define the error functions  $e_{\bu_h}$ and $e_{p_h}$ as follows:
\begin{equation}\label{error} 
e_{\bu_h}=\bu-\bu_h,  \qquad e_{p_h}=p-p_h.
\end{equation}

\begin{lemma}\label{errorequa}
The error functions $e_{\bu_h}$ and $e_{p_h}$, as defined in \eqref{error}, satisfy the following error equations:
\begin{equation}\label{erroreqn}
\begin{split}
 (\nabla_{w} e_{\bu_h}, \nabla_w  \bv_h) -(\nabla_w \cdot \bv_h, e_{p_h})+(\kappa^{-1} e_{\bu_0}, \bv_0)  =&\ell_1 (\bu, \bv_h)+\ell_2 (  \bv_h, p),    \forall \bv_h\in V_h^0, \\(\nabla_w \cdot e_{\bu_h}, q_h)=&0, \qquad \forall q_h\in W_h,
\end{split}
\end{equation}
where 
$$
\ell_1 (\bu, \bv_h)=\sum_{T\in {\cal T}_h}  
  \langle \bv_b-\bv_0,   ({\cal Q}_h-I) \nabla \bu \cdot \bn \rangle_{\partial T},
$$ 
$$
\ell_2 (  \bv_h, p)=\sum_{T\in {\cal T}_h} -\langle ({\cal Q}_h -I)p, (\bv_b-\bv_0)\cdot \bn \rangle_{\partial T}.
$$
\end{lemma}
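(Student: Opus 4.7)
The strategy is the standard WG error-analysis route: test the strong form of the Brinkman equations against $\bv_0$, integrate by parts element-by-element, recast the resulting volume and boundary contributions into weak-gradient and weak-divergence expressions using the identities \eqref{2.4new} and \eqref{divnew}, and finally subtract the discrete scheme \eqref{WG} to isolate the error.

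First I would take any $\bv_h=\{\bv_0,\bv_b\}\in V_h^0$, pair the momentum equation in \eqref{model} with $\bv_0$, and integrate by parts on each $T\in\T_h$ to get
$$\sum_{T\in\T_h}\!\Big[(\nabla\bu,\nabla\bv_0)_T-\langle\nabla\bu\cdot\bn,\bv_0\rangle_{\pT}-(p,\nabla\cdot\bv_0)_T+\langle p,\bv_0\cdot\bn\rangle_{\pT}\Big]+(\kappa^{-1}\bu,\bv_0)=(\bf,\bv_0).$$
Because $\nabla\bu\cdot\bn$ and $p$ are single-valued across interior faces and $\bv_b|_{\partial\Omega}=0$, the contributions of $\bv_b$ to the global boundary sums cancel, so I can replace $\bv_0$ by $\bv_0-\bv_b$ inside each boundary bracket without changing anything.

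Next I would invoke \eqref{2.4new} with $\bvarphi=\cal{Q}_h\nabla\bu$ and \eqref{divnew} with $w=\cal{Q}_h p$. Since $\nabla\bv_0\in[P_{k-1}(T)]^{d\times d}$ and $\nabla\cdot\bv_0\in P_{k-1}(T)$ lie in the range of $\cal{Q}_h$ (whose target polynomial degree $r$ satisfies $r\ge k-1$), the projection is transparent against them, yielding
$$(\nabla\bv_0,\nabla\bu)_T=(\nabla_w\bv_h,\cal{Q}_h\nabla\bu)_T-\langle\bv_b-\bv_0,\cal{Q}_h\nabla\bu\cdot\bn\rangle_{\pT},$$
and the analogous identity for $(\nabla\cdot\bv_0,p)_T$. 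Lemma \ref{Lemma5.1} then identifies $\cal{Q}_h\nabla\bu=\nabla_w\bu$, and because $\nabla_w\cdot\bv_h\in P_r(T)$ the $L^2$-projection property gives $(\nabla_w\cdot\bv_h,\cal{Q}_h p)_T=(\nabla_w\cdot\bv_h,p)_T$. Substituting these back and regrouping, the four boundary contributions (two from integration by parts, two from the weak-operator identities) telescope into exactly $\ell_1(\bu,\bv_h)+\ell_2(\bv_h,p)$ via the factor $(\cal{Q}_h-I)$.

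Subtracting the first line of \eqref{WG} now produces the first error equation; here I also use $(\kappa^{-1}\bu,\bv_0)_T=(\kappa^{-1}Q_0\bu,\bv_0)_T$, valid since $\bv_0\in[P_k(T)]^d$ and $\kappa^{-1}$ is constant, so the reaction term contributes $(\kappa^{-1}e_{\bu_0},\bv_0)$ as required. For the divergence equation, Lemma \ref{Lemma5.1} gives $\nabla_w\cdot Q_h\bu=\cal{Q}_h(\nabla\cdot\bu)=0$, and the second line of \eqref{WG} yields $(\nabla_w\cdot\bu_h,q_h)=0$; subtracting these delivers $(\nabla_w\cdot e_{\bu_h},q_h)=0$. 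The main obstacle is not analytical depth but sign and projection bookkeeping: one must track the boundary terms produced by element-wise integration by parts, by \eqref{2.4new}--\eqref{divnew}, and by inserting the $L^2$ projections, so that the clean $(\cal{Q}_h-I)$ structure of $\ell_1$ and $\ell_2$ emerges without spurious residuals.
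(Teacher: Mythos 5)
Your proposal is correct and follows essentially the same route as the paper: the same element-wise integration by parts, the same use of \eqref{2.4new} and \eqref{divnew} with the projected data $\mathcal{Q}_h\nabla\bu$ and $\mathcal{Q}_h p$, the same projection-transparency and commutativity identities from Lemma \ref{Lemma5.1}, and the same cancellation of the $\bv_b$ boundary sums via single-valuedness and $\bv_b|_{\partial\Omega}=0$; the paper merely runs the chain of identities in the opposite direction, starting from $(\nabla_w\bu,\nabla_w\bv_h)$ and unwinding to the strong form. The only quibble is that your insertion of $(\kappa^{-1}\bu,\bv_0)_T=(\kappa^{-1}Q_0\bu,\bv_0)_T$ is unnecessary, since $e_{\bu_0}=\bu-\bu_0$ here and the reaction terms subtract directly.
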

\begin{proof} Applying identity \eqref{pro1}, standard integration by parts, and choosing $\bvarphi= {\cal Q}_h \nabla \bu$ in  \eqref{2.4new}, we obtain 
\begin{equation}\label{term1}
\begin{split}
&\sum_{T\in {\cal T}_h} (\nabla_{w} \bu, \nabla_w  \bv_h)_T\\=&\sum_{T\in {\cal T}_h}\sum_{T\in {\cal T}_h} ({\cal Q}_h(\nabla \bu), \nabla_w  \bv_h)_T\\ 
=&\sum_{T\in {\cal T}_h}   (\nabla \bv_0,  {\cal Q}_h \nabla \bu)_T+
  \langle \bv_b-\bv_0,   {\cal Q}_h \nabla \bu \cdot \bn \rangle_{\partial T}\\
=& \sum_{T\in {\cal T}_h}(\nabla \bv_0,   \nabla \bu)_T+
  \langle \bv_b-\bv_0,   {\cal Q}_h \nabla \bu \cdot \bn \rangle_{\partial T}\\
  =& \sum_{T\in {\cal T}_h}-(  \bv_0,   \Delta \bu)_T+\langle \nabla \bu\cdot\bn, \bv_0\rangle_{\partial T}+
  \langle \bv_b-\bv_0,   {\cal Q}_h \nabla \bu \cdot \bn \rangle_{\partial T}\\
  =& \sum_{T\in {\cal T}_h}-(  \bv_0,   \Delta \bu)_T+ 
  \langle \bv_b-\bv_0,   ({\cal Q}_h-I) \nabla \bu \cdot \bn \rangle_{\partial T},
\end{split}
\end{equation}
where the boundary term $\sum_{T\in {\cal T}_h} \langle \nabla \bu\cdot\bn, \bv_b\rangle_{\partial T}=\langle \nabla \bu\cdot\bn, \bv_b\rangle_{\partial \Omega}=0$ since  $\bv_b=0$ on $\partial \Omega$.

Now, applying standard integration by parts to \eqref{divnew} with  $w={\cal Q}_h  p$, we get:
 \begin{equation}\label{termm}
     \begin{split}
  &\sum_{T\in {\cal T}_h} (\nabla_w \cdot \bv_h, p)_T\\=&
  \sum_{T\in {\cal T}_h} (\nabla_w \cdot \bv_h, {\cal Q}_h p)_T \\=& \sum_{T\in {\cal T}_h} (\nabla \cdot \bv_0,  {\cal Q}_h p)_T + \langle {\cal Q}_h p, (\bv_b-\bv_0)\cdot \bn \rangle_{\partial T}\\
  =& \sum_{T\in {\cal T}_h} (\nabla \cdot \bv_0,    p)_T + \langle {\cal Q}_h p, (\bv_b-\bv_0)\cdot \bn \rangle_{\partial T}\\
  =& \sum_{T\in {\cal T}_h}-( \bv_0,    \nabla p)_T 
+\langle p, \bv_0\cdot\bn\rangle_{\partial T}+ \langle {\cal Q}_h p, (\bv_b-\bv_0)\cdot \bn \rangle_{\partial T}\\    =& \sum_{T\in {\cal T}_h}-( \bv_0, \nabla p)_T + \langle ( {\cal Q}_h -I)p, (\bv_b-\bv_0)\cdot \bn \rangle_{\partial T},
\end{split}
 \end{equation}
where the boundary term   $\sum_{T\in {\cal T}_h} \langle p, \bv_b\cdot\bn\rangle_{\partial T}=\langle p, \bv_b\cdot\bn\rangle_{\partial \Omega}=0$ due to $\bv_b=0$ on $\partial \Omega$.

Subtracting \eqref{termm} from \eqref{term1}, and using the first equation in \eqref{model}, we find:
\begin{equation*}  
\begin{split}
&\sum_{T\in {\cal T}_h}(\nabla_{w} \bu, \nabla_w  \bv_h)_T -(\nabla_w \cdot \bv_h, p)_T+(\kappa ^{-1} \bu, \bv_0)_T \\=& \sum_{T\in {\cal T}_h}-(  \bv_0,   \Delta \bu)_T+ 
  \langle \bv_b-\bv_0,   ({\cal Q}_h-I) \nabla \bu \cdot \bn \rangle_{\partial T} + 
 ( \bv_0, \nabla p)_T \\&-\langle ({\cal Q}_h -I)p, (\bv_b-\bv_0)\cdot \bn \rangle_{\partial T}+(\kappa ^{-1} \bu, \bv_0)_T\\
 =&\sum_{T\in {\cal T}_h} (  \bv_0,   \bf)_T+ 
  \langle \bv_b-\bv_0,   ({\cal Q}_h-I) \nabla \bu \cdot \bn \rangle_{\partial T}   -\langle ({\cal Q}_h -I)p, (\bv_b-\bv_0)\cdot \bn \rangle_{\partial T}.
  \end{split}
\end{equation*}
Subtracting the first equation of  \eqref{WG}  from the above  yields  the first equation of \eqref{erroreqn}.

Note that using \eqref{pro2} and  the second equation of \eqref{model}, we have:
$$
0= (\nabla_w \cdot \bu, q_h)=\sum_{T\in {\cal T}_h}({\cal Q}_h \nabla\cdot \bu, q_h)_T=\sum_{T\in {\cal T}_h}(\nabla\cdot \bu, q_h)_T=0.
$$
Subtracting this from the second equation of \eqref{WG} yields the second equation in \eqref{erroreqn}, completing the proof.

\end{proof}

\section{Error Estimates} 
\begin{lemma}\cite{wg21, autostokes}\label{lem}
Let ${\cal T}_h$ be a finite element partition of the domain $\Omega$ satisfying the shape regularity condition stated in  \cite{wy3655}. Then, for any $0\leq s \leq 1$ and $1\leq m \leq k$, $1\leq  n \leq  2N+k-1$, the following estimates hold:
\begin{eqnarray}
\label{error1}
 \sum_{T\in {\cal T}_h} h_T^{2s}\|({\cal Q}_h-I)p\|^2_{s,T}&\leq& C  h^{2n}\|p\|^2_{n},\\
\label{error2}
\sum_{T\in {\cal T}_h}h_T^{2s}\|\bu- Q _0\bu\|^2_{s,T}&\leq& C h^{2(m+1)}\|\bu\|^2_{m+1},\\
\label{error3}\sum_{T\in {\cal T}_h}h_T^{2s}\|\nabla\bu-{\cal Q}_h(\nabla\bu)\|^2_{s,T}&\leq& C h^{2n}\|\bu\|^2_{n+1}.
\end{eqnarray}
 \end{lemma}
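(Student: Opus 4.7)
The plan is to reduce all three bounds to a single local $L^2$-projection estimate of Bramble--Hilbert type and then sum over elements. The central tool is the following standard property of the $L^2$-projection $\Pi_T$ onto $P_\ell(T)$ on a shape-regular polytopal element: for $0\le s\le t\le \ell+1$ and any $v\in H^t(T)$,
\[
\|v-\Pi_T v\|_{s,T}\le C\, h_T^{t-s}\,|v|_{t,T}.
\]
On simplices this is the classical Bramble--Hilbert estimate obtained by scaling to a reference element; on the general polytopal elements admitted here, including the non-convex ones that the bubble construction in Section~\ref{Section:WGFEM} is designed to handle, the same bound was established in \cite{wy3655} under the shape regularity condition invoked throughout the paper. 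I would simply cite that result rather than reprove it.

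First I would verify \eqref{error1}. The operator $\mathcal{Q}_h$ is the $L^2$-projection onto $P_r(T)$ with $r=2N+k-1$ on non-convex elements and $r=N+k-1$ on convex ones, as specified before Lemma~\ref{Lemma5.1}. The hypothesis $1\le n\le 2N+k-1$ guarantees $n\le r+1$, so the template above with $\ell=r$ and $t=n$ applies and yields $\|(\mathcal{Q}_h-I)p\|_{s,T}\le C h_T^{n-s}|p|_{n,T}$. Squaring, multiplying by $h_T^{2s}$, bounding $h_T\le h$, and summing over $T\in\mathcal{T}_h$ produces \eqref{error1}.

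For \eqref{error2}, $Q_0$ is the componentwise $L^2$-projection onto $[P_k(T)]^d$, and the hypothesis $1\le m\le k$ gives $m+1\le k+1$, so the same template with $\ell=k$ and $t=m+1$ applies. The resulting local bound $\|\bu-Q_0\bu\|_{s,T}\le C h_T^{m+1-s}|\bu|_{m+1,T}$ yields \eqref{error2} after the same squaring/summing procedure. The third estimate \eqref{error3} then follows immediately by applying \eqref{error1} componentwise to $\nabla\bu\in[H^n(T)]^{d\times d}$ and using $|\nabla\bu|_{n,T}\le|\bu|_{n+1,T}$.

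The only substantive point is the polytopal Bramble--Hilbert estimate itself; once it is in hand, the rest is bookkeeping of powers of $h_T$. The main thing to check, and where the specific choice of the degree $r$ from Section~\ref{Section:WGFEM} plays its role, is that $r$ is large enough so that the full range $1\le n\le 2N+k-1$ stated in the lemma is consistent with $n\le r+1$; this is exactly the point of taking $r=2N+k-1$ in the non-convex case. I expect no further obstacle beyond invoking \cite{wy3655} and matching the exponents.
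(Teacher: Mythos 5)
Your proposal is correct and is the standard argument: the paper itself offers no proof of this lemma, simply citing \cite{wg21, autostokes}, and the local Bramble--Hilbert/scaling estimate for the $L^2$-projection on shape-regular polytopal elements from \cite{wy3655}, followed by summation over $T\in\mathcal{T}_h$, is exactly what underlies those references. Your reduction of \eqref{error3} to \eqref{error1} applied componentwise to $\nabla\bu$ is also how these estimates are normally organized. The one point worth noting explicitly is the consistency check you raise at the end: the condition $n\le r+1$ is satisfied throughout the stated range $1\le n\le 2N+k-1$ only when $r=2N+k-1$ (the non-convex case); on convex elements, where $\mathcal{Q}_h$ projects onto degree $N+k-1$, the estimate \eqref{error1} as proved by your argument holds only for $n\le N+k$ --- this is a looseness in the lemma's stated range rather than a defect in your proof, and it is harmless since the paper only ever invokes the lemma with $n=1$ and $n=k$.
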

 \begin{lemma}\label{lemma1}
If   $\bu\in [H^{k+1}(\Omega)]^d$, then there exists a constant 
$C$
 such that 
\begin{equation}\label{erroresti1}
\3bar \bu-Q_h\bu \3bar \leq Ch^{k}\|\bu\|_{k+1}.
\end{equation}
\end{lemma}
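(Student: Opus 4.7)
The plan is to estimate the two contributions to $\3bar\bu-Q_h\bu\3bar$ directly using the smooth-function representation of the discrete weak gradient \eqref{2.4new} together with the projection error bounds of Lemma \ref{lem}. Interpret $\bu$ as the weak function $\{\bu, \bu|_{\partial T}\}$ so that $\bu - Q_h\bu = \{\bu - Q_0\bu, \bu - Q_b\bu\}$, and note that the boundary discrepancy $(\bu - Q_b\bu) - (\bu - Q_0\bu) = Q_0\bu - Q_b\bu$. Then for $\bvarphi\in[P_r(T)]^{d\times d}$, formula \eqref{2.4new} gives
\begin{equation*}
(\nabla_w(\bu - Q_h\bu), \bvarphi)_T = (\nabla(\bu - Q_0\bu), \bvarphi)_T + \langle Q_0\bu - Q_b\bu,\ \bvarphi\cdot\bn\rangle_{\partial T}.
\end{equation*}
Choosing $\bvarphi = \nabla_w(\bu - Q_h\bu)$, applying Cauchy--Schwarz, and using the polynomial trace inequality \eqref{trace} on $\bvarphi$ yield
\begin{equation*}
\|\nabla_w(\bu - Q_h\bu)\|_T \leq C\|\nabla(\bu - Q_0\bu)\|_T + C h_T^{-1/2}\|Q_0\bu - Q_b\bu\|_{\partial T}.
\end{equation*}

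Next I would control each piece after squaring and summing over $T\in\T_h$. The volume gradient error is handled by Lemma \ref{lem}, \eqref{error2}, with $s=1$ and $m=k$, giving $\sum_T \|\nabla(\bu - Q_0\bu)\|_T^2 \leq C h^{2k}\|\bu\|_{k+1}^2$. For the boundary mismatch, I would insert $\bu$ as a midpoint, $Q_0\bu - Q_b\bu = (Q_0\bu - \bu) + (\bu - Q_b\bu)$. The first summand is estimated by the general trace inequality \eqref{tracein} followed by \eqref{error2} with $s=0$ and $s=1$. For the second summand I would exploit the fact that $Q_b$ is the $L^2$-best approximation onto $P_k(e)$, so $\|\bu - Q_b\bu\|_e \leq \|\bu - Q_0\bu\|_e$ on each face $e\subset\partial T$, reducing it to the same type of estimate. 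Combining gives $\sum_T h_T^{-1}\|Q_0\bu - Q_b\bu\|_{\partial T}^2 \leq C h^{2k}\|\bu\|_{k+1}^2$.

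For the zeroth-order part of the triple-bar norm, boundedness of $\kappa^{-1}$ together with \eqref{error2} yields $\sum_T(\kappa^{-1}(\bu - Q_0\bu),\bu - Q_0\bu)_T \leq C h^{2(k+1)}\|\bu\|_{k+1}^2 \leq C h^{2k}\|\bu\|_{k+1}^2$. Summing all contributions and taking square roots establishes \eqref{erroresti1}. The main obstacle is the boundary mismatch term $h_T^{-1/2}\|Q_0\bu - Q_b\bu\|_{\partial T}$: once it is split around $\bu$ and reduced to standard $L^2$-projection errors on $T$, the $L^2$-optimality of $Q_b$ on each face and the trace inequality \eqref{tracein} route everything through Lemma \ref{lem}, leaving no genuine difficulty; all other steps are textbook approximation-theoretic manipulations.
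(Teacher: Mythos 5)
Your proof is correct and follows essentially the same route as the paper: test \eqref{2.4new} with $\bvarphi=\nabla_w(\bu-Q_h\bu)$, apply Cauchy--Schwarz and the trace inequalities \eqref{tracein}--\eqref{trace}, and invoke the projection estimates \eqref{error2} for both the weak-gradient part and the $\kappa^{-1}$ part of the triple-bar norm. The only cosmetic difference is in reducing the boundary mismatch: you split $Q_0\bu-Q_b\bu$ around $\bu$ and use the $L^2(e)$-best-approximation property of $Q_b$, whereas the paper in effect uses $Q_b\bu-Q_0\bu=Q_b(\bu-Q_0\bu)$ and the $L^2$-boundedness of $Q_b$; both are one-line and equivalent.
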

\begin{proof}
From identity \eqref{2.4new}, the trace inequalities \eqref{tracein} and \eqref{trace}, and the Cauchy–Schwarz inequality, along with estimate \eqref{error2} for   $m=k$ and $s=0, 1$, we derive the following for any $\bvarphi\in [P_r(T)]^{d\times d}$,
\begin{equation*}
\begin{split}
&|\sum_{T\in {\cal T}_h} (\nabla_w (\bu-Q_h\bu), \bvarphi)_T|\\=&  | \sum_{T\in {\cal T}_h} (\nabla (\bu-Q_0\bu), \bvarphi)_T-\langle Q_b\bu-Q_0\bu, \bvarphi \cdot\bn\rangle_{\partial T}|\\
\leq & (\sum_{T\in {\cal T}_h} \|\nabla (\bu-Q_0\bu)\|_T )^{\frac{1}{2}}(\sum_{T\in {\cal T}_h}\|\bvarphi\|_T^2)^{\frac{1}{2}}+(\sum_{T\in {\cal T}_h} \| Q_b\bu-Q_0\bu\|_{\partial T} ^2)^{\frac{1}{2}} (\sum_{T\in {\cal T}_h}\|\bvarphi \cdot\bn\|_{\partial T}^2)^{\frac{1}{2}}\\\leq & (\sum_{T\in {\cal T}_h} \|\nabla (\bu-Q_0\bu)\|_T )^{\frac{1}{2}}(\sum_{T\in {\cal T}_h}\|\bvarphi\|_T^2)^{\frac{1}{2}}
\\&+(\sum_{T\in {\cal T}_h} h_T^{-1}\| \bu-Q_0\bu\|_{ T} ^2+h_T \| \bu-Q_0\bu\|_{1,T} ^2)^{\frac{1}{2}} (\sum_{T\in {\cal T}_h}h_T^{-1}\|\bvarphi \|_{T}^2)^{\frac{1}{2}}\\
\leq & Ch^k\|\bu\|_{k+1} (\sum_{T\in {\cal T}_h} \|\bvarphi \|_{T}^2)^{\frac{1}{2}}.
\end{split}
\end{equation*}
By taking $\bvarphi=\nabla_w (\bu-Q_h\bu)$, we obtain 
 \begin{equation}\label{a1}
\sum_{T\in {\cal T}_h} (\nabla_w (\bu-Q_h\bu), \nabla_w (\bu-Q_h\bu))_T\leq 
 Ch^{2k}\|\bu\|^2_{k+1}.  
 \end{equation}

Next,   applying the Cauchy–Schwarz inequality, we get:
$$
\sum_{T\in {\cal T}_h} (\kappa^{-1} (\bu-Q_0\bu), \bphi)_T\leq  (\sum_{T\in {\cal T}_h}  \|\kappa^{-1}(\bu-Q_0\bu)\|_T^2)^{\frac{1}{2}} (\sum_{T\in {\cal T}_h} \|\bphi\|_T^2)^{\frac{1}{2}}.$$
Letting $\bphi = \bu-Q_0\bu $ and using estimate \eqref{error2} with $m=k$ and $s=0$, we have
\begin{equation}\label{a2}
     \sum_{T\in {\cal T}_h}  (\kappa^{-1}(\bu-Q_0\bu), (\bu-Q_0\bu)) \leq Ch^{2k}\|\bu\|^2_{k+1}.
\end{equation} 

 Combining \eqref{a1} and \eqref{a2} completes the proof.
\end{proof}

\begin{lemma}\label{lemma2} \cite{autostokes}
If   $\bu\in [H^{k+1}(\Omega)]^d$, then there exists a constant 
$C$
 such that 
\begin{equation}\label{erroresti2}
(\sum_{T\in {\cal T}_h} \|\nabla_w\cdot (\bu-Q_h\bu)\|_T^2)^{
\frac{1}{2}
} \leq Ch^{k}\|\bu\|_{k+1}.
\end{equation}
\end{lemma}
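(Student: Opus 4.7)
The strategy is to estimate $\|\nabla_w\cdot(\bu-Q_h\bu)\|_T$ element by element using the integration-by-parts form \eqref{divnew} of the discrete weak divergence, applied to $\bv_0 = \bu - Q_0\bu$ and $\bv_b = (\bu - Q_b\bu)|_{\partial T}$. For any test polynomial $w \in P_r(T)$, the matching $\bu$ contributions in the jump cancel and one obtains
\[
(\nabla_w\cdot(\bu-Q_h\bu), w)_T = (\nabla\cdot(\bu-Q_0\bu), w)_T + \langle (Q_0\bu - Q_b\bu)\cdot\bn, w\rangle_{\partial T}.
\]
The plan is to bound each piece by $C h_T^k \|\bu\|_{k+1,T}\|w\|_T$, then test with $w = \nabla_w\cdot(\bu-Q_h\bu)\in P_r(T)$ to extract the $T$-local estimate, and finally sum over $T\in\mathcal{T}_h$.

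The interior term is handled directly by Cauchy--Schwarz together with the projection estimate \eqref{error2} at $s=1$, $m=k$:
\[
|(\nabla\cdot(\bu-Q_0\bu), w)_T| \leq \|\nabla(\bu-Q_0\bu)\|_T \|w\|_T \leq C h_T^k \|\bu\|_{k+1,T} \|w\|_T.
\]
The boundary term requires a bit more care. Since $Q_0\bu|_e \in P_k(e)$, the projection $Q_b$ reproduces it, so $Q_0\bu - Q_b\bu = Q_b(Q_0\bu - \bu)$ on each face $e \subset \partial T$, and the $L^2$-stability of $Q_b$ yields $\|Q_0\bu - Q_b\bu\|_e \leq \|\bu - Q_0\bu\|_e$. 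Combining the trace inequality \eqref{tracein} applied to $\bu - Q_0\bu$ with \eqref{error2} at $s=0,1$ and $m=k$ gives $\|Q_0\bu - Q_b\bu\|_{\partial T} \leq C h_T^{k+1/2} \|\bu\|_{k+1,T}$. Cauchy--Schwarz on $\partial T$ together with the polynomial trace inequality \eqref{trace}, namely $\|w\|_{\partial T} \leq C h_T^{-1/2} \|w\|_T$, then produces
\[
|\langle (Q_0\bu - Q_b\bu)\cdot\bn, w\rangle_{\partial T}| \leq C h_T^k \|\bu\|_{k+1,T} \|w\|_T.
\]

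Adding the two estimates and selecting $w = \nabla_w\cdot(\bu-Q_h\bu)$ yields $\|\nabla_w\cdot(\bu-Q_h\bu)\|_T \leq C h_T^k \|\bu\|_{k+1,T}$ on each element; squaring and summing over $T \in \mathcal{T}_h$ finishes the proof. The main obstacle I anticipate is the boundary term, specifically recasting the jump $Q_0\bu - Q_b\bu$ in a form amenable to standard approximation estimates: inserting the intermediate approximant $Q_0\bu$ on each face and using $Q_b(Q_0\bu) = Q_0\bu$ converts this jump into a pure projection error of $\bu - Q_0\bu$ on $e$, which is what preserves the optimal order $h^k$ in the final bound.
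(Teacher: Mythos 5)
Your proof is correct. The paper itself gives no proof of this lemma (it is quoted from \cite{autostokes}), but your argument is exactly the divergence analogue of the paper's own proof of Lemma \ref{lemma1}: test the integration-by-parts identity \eqref{divnew} with an arbitrary $w\in P_r(T)$, bound the interior term by the projection estimate \eqref{error2} with $s=1$, bound the boundary term via the trace inequalities \eqref{tracein} and \eqref{trace}, and then take $w=\nabla_w\cdot(\bu-Q_h\bu)$. All the individual steps check out; in particular the reduction $Q_0\bu-Q_b\bu=Q_b(Q_0\bu-\bu)$ on each face, using that $Q_0\bu|_e\in [P_k(e)]^d$ is reproduced by $Q_b$ together with the $L^2(e)$-stability of $Q_b$, is the right way to restore the optimal power of $h$ in the jump term. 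It is worth noting that within this paper's conventions an even shorter route is available: by \eqref{pro2} and \eqref{pro3} of Lemma \ref{Lemma5.1} one has $\nabla_w\cdot\bu={\cal Q}_h(\nabla\cdot\bu)=\nabla_w\cdot Q_h\bu$, so $\nabla_w\cdot(\bu-Q_h\bu)$ vanishes identically and the stated bound is immediate; your direct estimate has the advantage of not relying on those commutativity identities (and hence on the precise degree of the test space used for the weak divergence), at the cost of being longer.
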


\begin{lemma}
For any $\bu\in [H^{k+1}(\Omega)]^d$, $q\in H^{k}(\Omega)$, $\bv_h=\{\bv_0, \bv_b\}\in V_h^0$ and $q_h \in W_h$, the following estimates hold:
\begin{equation}\label{es1}
   |\ell_1(\bu, \bv_h)| \leq  Ch^k \|\bu\|_{k+1}\3bar \bv_h \3bar, 
\end{equation}
\begin{equation}\label{es2}
   |\ell_2(\bv_h, p)| \leq  Ch^k \|p\|_{k}\3bar \bv_h \3bar.
\end{equation}

\end{lemma}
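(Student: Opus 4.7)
The plan is to treat both $\ell_1$ and $\ell_2$ in parallel since they have essentially the same structure: in each case we have a boundary integral pairing a jump $\bv_b - \bv_0$ (modulated by $\bn$) against a projection error $({\cal Q}_h - I)\Psi$, where $\Psi = \nabla\bu$ for $\ell_1$ and $\Psi = p$ for $\ell_2$. Apply Cauchy--Schwarz first on $\partial T$ and then on the sum over $T \in {\cal T}_h$, with a strategic scaling by $h_T^{\pm 1/2}$ to separate out the $\bv_h$ factor in the $\|\cdot\|_{1,h}$-norm. Explicitly, one writes
\[
\sum_T \langle \bv_b - \bv_0, ({\cal Q}_h - I)\Psi\cdot\bn\rangle_{\partial T}
\le \Big(\sum_T h_T^{-1}\|\bv_b - \bv_0\|_{\partial T}^2\Big)^{1/2}
\Big(\sum_T h_T\|({\cal Q}_h - I)\Psi\|_{\partial T}^2\Big)^{1/2}.
\]
The first factor is directly bounded by $\|\bv_h\|_{1,h}$ via the definition \eqref{disnorm}, and then by $C\3bar\bv_h\3bar$ through the norm equivalence in Lemma \ref{normeqva}.

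For the second factor, the idea is to eliminate the boundary trace by the $H^1$-based trace inequality \eqref{tracein}, which gives
\[
h_T\|({\cal Q}_h - I)\Psi\|_{\partial T}^2 \le C\big(\|({\cal Q}_h - I)\Psi\|_T^2 + h_T^2\|\nabla({\cal Q}_h - I)\Psi\|_T^2\big).
\]
Summing over $T$ and invoking Lemma \ref{lem}: for $\ell_2$ use estimate \eqref{error1} with $n = k$ and $s = 0, 1$ to obtain $Ch^{2k}\|p\|_k^2$; for $\ell_1$ use estimate \eqref{error3} with $n = k$ and $s = 0, 1$ to obtain $Ch^{2k}\|\bu\|_{k+1}^2$. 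Taking square roots yields the factor $Ch^k\|p\|_k$ or $Ch^k\|\bu\|_{k+1}$ respectively. Multiplying by the bound on the first factor gives exactly \eqref{es1} and \eqref{es2}.

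I do not anticipate a genuine obstacle here; the estimate is essentially a scaling argument plus projection error bounds. The only point requiring mild care is the choice of the weight $h_T^{\pm 1/2}$ in the Cauchy--Schwarz split, which must be consistent with the $h_T^{-1}$-weighted jump term appearing in $\|\cdot\|_{1,h}$ and with the $h_T$-weighting produced by the trace inequality; both sides balance precisely at the target order $h^k$. One should also note that the normal vector $\bn$ contributes no growth since $|\bn| = 1$, so $\|({\cal Q}_h - I)\Psi \cdot \bn\|_{\partial T} \le \|({\cal Q}_h - I)\Psi\|_{\partial T}$ without any loss. With these pieces assembled, both estimates follow in a few lines.
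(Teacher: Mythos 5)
Your argument is correct and follows essentially the same route as the paper's proof: the weighted Cauchy--Schwarz split with $h_T^{\mp 1/2}$, the bound of the jump factor by $\|\bv_h\|_{1,h}$ and then by $C\3bar\bv_h\3bar$ via Lemma \ref{normeqva}, and the conversion of the boundary projection-error factor to volume terms via the trace inequality \eqref{tracein} followed by \eqref{error3} (for $\ell_1$) and \eqref{error1} (for $\ell_2$) with $n=k$. No substantive differences to report.
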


\begin{proof}
Recall that ${\cal Q}_h$ denotes the $L^2$ projection operator onto the finite element space of piecewise polynomials of degree at most $2N+k-1\geq k$ on non-convex elements, and  $N+k-1\geq k$ on convex elements in the finite element partition.

To estimate \eqref{es1}, from the Cauchy–Schwarz inequality, the trace inequality \eqref{tracein}, the norm equivalence \eqref{normeq}, and the estimate \eqref{error3} with $n=k$, we obtain:
 \begin{equation*} 
\begin{split}
|\ell_1(\bu, \bv_h)|\leq &(\sum_{T\in {\cal T}_h}  
  h_T^{-1}\|\bv_b-\bv_0\|_{\partial T} ^2)^{\frac{1}{2}} (\sum_{T\in {\cal T}_h}  
 h_T \|({\cal Q}_h-I) \nabla \bu \cdot \bn\|_{\partial T} ^2)^{\frac{1}{2}} \\
\leq & \|\bv_h\|_{1, h} (\sum_{T\in {\cal T}_h}  
 \|({\cal Q}_h-I) \nabla \bu \cdot \bn\|_{T} ^2+ h_T^2 \|({\cal Q}_h-I) \nabla \bu \cdot \bn\|_{1, T} ^2)^{\frac{1}{2}}\\ 
 \leq & Ch^k \|\bu\|_{k+1}\3bar \bv_h \3bar,
\end{split}
\end{equation*}
which proves \eqref{es1}.

Similarly, to estimate \eqref{es2}, we apply the Cauchy–Schwarz inequality, the trace inequality \eqref{tracein}, the norm equivalence \eqref{normeq}, and the  estimate \eqref{error1} with $n=k$:
 \begin{equation*} 
\begin{split}
|\ell_2(\bv_h, p)|\leq &(\sum_{T\in {\cal T}_h}  
  h_T^{-1}\|\bv_b-\bv_0\|_{\partial T} ^2)^{\frac{1}{2}} (\sum_{T\in {\cal T}_h}  
 h_T \|({\cal Q}_h-I)p\|_{\partial T} ^2)^{\frac{1}{2}} \\
\leq & \|\bv_h\|_{1, h} (\sum_{T\in {\cal T}_h}  
 \|({\cal Q}_h-I)p\|_{T} ^2+ h_T^2 \|({\cal Q}_h-I)p\|_{1, T} ^2)^{\frac{1}{2}}\\ 
 \leq & Ch^k \|p\|_{k}\3bar \bv_h \3bar,
\end{split}
\end{equation*}which establishes \eqref{es2} and completes the proof.

\end{proof}
\begin{theorem}
Let the exact solution 
$(\bu, p)$
 of the Brinkman problem \eqref{model} satisfy 
$\bu\in [H^{k+1}(\Omega)]^d$ and $p\in H^k(\Omega)$. Let $(\bu_h, p_h)$ be the numerical solution of the Stablizer-Free WG scheme \ref{PDWG1}. Then, the following error estimate holds 
\begin{equation}\label{trinorm}
\3bar \bu-\bu_h\3bar +\|p-p_h\|\leq Ch^{k}(\|\bu\|_{k+1}+\|p\|_k).
\end{equation}
\end{theorem}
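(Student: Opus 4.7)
The plan is to run the standard Brezzi-type mixed-method error analysis adapted to the WG framework: split each error into an interpolation piece and a discrete piece, control the latter through the error equations \eqref{erroreqn}, and close the pressure bound with the inf-sup condition \eqref{infsup}. Concretely, set $\xi_h := Q_h \bu - \bu_h \in V_h^0$ and $\zeta_h := \mathbb{Q}_h p - p_h \in W_h$, where $\mathbb{Q}_h$ denotes the elementwise $L^2$ projection onto $W_h$, so that $e_{\bu_h} = (\bu - Q_h \bu) + \xi_h$ and $e_{p_h} = (p - \mathbb{Q}_h p) + \zeta_h$. Lemma \ref{lemma1} supplies $\3bar \bu - Q_h \bu \3bar \le C h^k \|\bu\|_{k+1}$ and standard polynomial approximation gives $\|p - \mathbb{Q}_h p\| \le C h^k \|p\|_k$, so it suffices to prove $\3bar \xi_h \3bar \le C h^k(\|\bu\|_{k+1} + \|p\|_k)$ and $\|\zeta_h\| \le C h^k(\|\bu\|_{k+1} + \|p\|_k)$; the conclusion then follows by the triangle inequality.

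Testing the first error equation \eqref{erroreqn} with $\bv_h = \xi_h$ and substituting $e_{\bu_h} = (\bu - Q_h \bu) + \xi_h$ and $e_{\bu_0} = (\bu - Q_0 \bu) + \xi_0$ isolates the energy term and produces
\begin{equation*}
\3bar \xi_h \3bar^2 = (\nabla_w \cdot \xi_h, e_{p_h}) + \ell_1(\bu, \xi_h) + \ell_2(\xi_h, p) - (\nabla_w(\bu - Q_h \bu), \nabla_w \xi_h) - (\kappa^{-1}(\bu - Q_0 \bu), \xi_0).
\end{equation*}
The $\ell_1, \ell_2$ contributions are handled directly by \eqref{es1}--\eqref{es2}, and the two interpolation terms by Cauchy--Schwarz together with Lemma \ref{lemma1} and the projection estimate \eqref{error2}; each one yields a bound of order $C h^k(\|\bu\|_{k+1} + \|p\|_k) \3bar \xi_h \3bar$. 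For the coupling $(\nabla_w \cdot \xi_h, e_{p_h})$, I split it as $(\nabla_w \cdot \xi_h, p - \mathbb{Q}_h p) + (\nabla_w \cdot \xi_h, \zeta_h)$: the first piece is dispatched by Cauchy--Schwarz using the elementwise bound $\|\nabla_w \cdot \xi_h\|_T \le C(\|\nabla \xi_0\|_T + h_T^{-1/2} \|\xi_0 - \xi_b\|_{\partial T})$, obtained from \eqref{divnew} by testing against $\nabla_w \cdot \xi_h$ itself and invoking the polynomial trace inequality \eqref{trace}, followed by the norm equivalence \eqref{normeq}; the second piece is rewritten via the second error equation \eqref{erroreqn} with $q_h = \zeta_h$ as $-(\nabla_w \cdot (\bu - Q_h \bu), \zeta_h)$ and controlled by Lemma \ref{lemma2}. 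Collecting,
\begin{equation*}
\3bar \xi_h \3bar^2 \le C h^k(\|\bu\|_{k+1} + \|p\|_k) \3bar \xi_h \3bar + C h^k \|\bu\|_{k+1} \|\zeta_h\|.
\end{equation*}

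To close the system, apply the inf-sup condition \eqref{infsup} to $\zeta_h$. For arbitrary $\bv_h \in V_h^0$, write $(\nabla_w \cdot \bv_h, \zeta_h) = (\nabla_w \cdot \bv_h, e_{p_h}) - (\nabla_w \cdot \bv_h, p - \mathbb{Q}_h p)$ and then use the first error equation to replace $(\nabla_w \cdot \bv_h, e_{p_h})$ by $(\nabla_w e_{\bu_h}, \nabla_w \bv_h) + (\kappa^{-1} e_{\bu_0}, \bv_0) - \ell_1(\bu,\bv_h) - \ell_2(\bv_h,p)$. Bounding $\3bar e_{\bu_h} \3bar \le \3bar \bu - Q_h \bu \3bar + \3bar \xi_h \3bar$ and reusing the same estimates as before yields $\|\zeta_h\| \le C[\3bar \xi_h \3bar + h^k(\|\bu\|_{k+1} + \|p\|_k)]$. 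Substituting into the preceding inequality and applying Young's inequality to absorb $\3bar \xi_h \3bar$ on the right into the left gives $\3bar \xi_h \3bar \le C h^k(\|\bu\|_{k+1} + \|p\|_k)$, whence also $\|\zeta_h\| \le C h^k(\|\bu\|_{k+1} + \|p\|_k)$, and the theorem follows.

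The delicate step is the coupling $(\nabla_w \cdot \xi_h, e_{p_h})$. Because the discrete weak divergence lands in $P_r(T)$ with $r = 2N + k - 1$ (or $N + k - 1$ on convex cells), strictly larger than the pressure degree $k-1$, the familiar orthogonality $(\nabla_w \cdot \xi_h, p - \mathbb{Q}_h p) = 0$ is unavailable and must be replaced by the norm bound $\|\nabla_w \cdot \bv\| \le C \3bar \bv \3bar$; this in turn forces the argument to establish $\3bar \xi_h \3bar$ and $\|\zeta_h\|$ through a coupled pair of inequalities rather than sequentially, with the inf-sup condition plus a Young's-inequality absorption providing the only available decoupling.
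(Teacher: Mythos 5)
Your proposal is correct, and at the top level it follows the same road map as the paper's proof: test the error equations \eqref{erroreqn} with $Q_h\bu-\bu_h$, control $\ell_1,\ell_2$ via \eqref{es1}--\eqref{es2}, control the interpolation terms via Lemma \ref{lemma1} and the projection estimates, and recover the pressure from the inf-sup condition \eqref{infsup}. The genuine difference is in the velocity--pressure coupling term, and there your treatment is tighter than the paper's. The paper reduces its term $I_4=\sum_T(\nabla_w\cdot(Q_h\bu-\bu_h),e_{p_h})_T$ to $\sum_T(\nabla_w\cdot(\bu_h-Q_h\bu),{\cal Q}_hp-p_h)_T$ and then inserts the bound $\|{\cal Q}_hp-p_h\|\le Ch^k\|p\|_k$, which is only derived afterwards from the inf-sup argument --- so, as written, the paper's estimate of $I_4$ is circular (and ${\cal Q}_hp-p_h$ does not lie in $W_h$, so the second error equation cannot be invoked for it either). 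You avoid this by (i) measuring the discrete pressure error against the $W_h$-projection of $p$, so the second error equation legitimately applies to $(\nabla_w\cdot\xi_h,\zeta_h)$; (ii) absorbing the non-orthogonal piece $(\nabla_w\cdot\xi_h,p-\mathbb{Q}_hp)$ through the elementwise bound $\|\nabla_w\cdot\xi_h\|_T\le C(\|\nabla\xi_0\|_T+h_T^{-1/2}\|\xi_0-\xi_b\|_{\partial T})$ and the norm equivalence \eqref{normeq}; and (iii) closing the resulting coupled pair of inequalities for $\3bar\xi_h\3bar$ and $\|\zeta_h\|$ with Young's inequality. This yields a self-contained, non-circular argument at the cost of slightly longer bookkeeping. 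Two small remarks: your diagnosis that $(\nabla_w\cdot\xi_h,p-\mathbb{Q}_hp)$ need not vanish is consistent with the paper's theoretical setup, where the weak divergence is taken in degree $2N+k-1$ (Lemma \ref{Lemma5.1}); and the term $(\nabla_w\cdot\xi_h,\zeta_h)=-(\nabla_w\cdot(\bu-Q_h\bu),\zeta_h)$ in fact vanishes identically by \eqref{pro2}--\eqref{pro3} and $\nabla\cdot\bu=0$, so your appeal to Lemma \ref{lemma2} there is valid but not even needed.
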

\begin{proof}
 Let $\bv_h=Q_h\bu-\bu_h$  in the first equation of \eqref{erroreqn}. Then:
  \begin{equation} \label{er}
\begin{split}
\3bar e_{\bu_h}\3bar ^2=& \sum_{T\in {\cal T}_h} (\nabla_w  e_{\bu_h} , \nabla_w   e_{\bu_h} )_T +(\kappa^{-1}  e_{\bu_0}, e_{\bu_0})_T\\
=& \sum_{T\in {\cal T}_h} (\nabla_w  e_{\bu_h} , \nabla_w    (\bu-Q_h\bu) )_T+ (\nabla_w  e_{\bu_h} , \nabla_w    ( Q_h\bu-\bu_h) )_T\\&+(\kappa^{-1}  e_{\bu_0}, \bu-Q_0\bu)_T+(\kappa^{-1}  e_{\bu_0}, Q_0\bu-\bu_0)_T\\
=& \sum_{T\in {\cal T}_h} (\nabla_w  e_{\bu_h} , \nabla_w    (\bu-Q_h\bu) )_T+ \ell_1(\bu, Q_h\bu-\bu_h) \\
&+\ell_2 (Q_h\bu-\bu_h, p)+\sum_{T\in {\cal T}_h} (\nabla_w \cdot(Q_h\bu-\bu_h), e_{p_h})_T\\&+(\kappa^{-1}  e_{\bu_0}, \bu-Q_0\bu)_T\\
 =&I_1+I_2+I_3+I_4+I_5. 
\end{split}
\end{equation}

We now proceed to estimate each term 
 $I_i$ for $i=1, \cdots, 5$.

Estimate of $I_1$:  Applying the Cauchy-Schwarz inequality and Lemma \ref{lemma1}, we have
  \begin{equation*} 
\begin{split}
\sum_{T\in {\cal T}_h} (\nabla_w  e_{\bu_h} , \nabla_w    (\bu-Q_h\bu) )_T\leq & \3bar   e_{\bu_h}\3bar \3bar       \bu-Q_h\bu  \3bar
\\ \leq & Ch^k\|\bu\|_{k+1}\3bar   e_{\bu_h}\3bar.
\end{split}
\end{equation*}

Estimate of $I_2$:   Choosing $\bv_h=Q_h\bu-\bu_h$ in \eqref{es1}, and applying Lemma \ref{lemma1} along with the triangle inequality, we obtain
\begin{equation*} 
\begin{split}
|\ell_1(\bu, Q_h\bu-\bu_h)|\leq & Ch^k\|\bu\|_{k+1}\3bar  Q_h\bu-\bu_h \3bar \\
\leq & Ch^k\|\bu\|_{k+1}(\3bar  Q_h\bu- \bu  \3bar+  \3bar   \bu-\bu_h \3bar)
\\  \leq & Ch^k\|\bu\|_{k+1}(h^k\|\bu\|_{k+1}+  \3bar   \bu-\bu_h \3bar).
\end{split}
\end{equation*}

Estimate of $I_3$:  Taking $\bv_h=Q_h\bu-\bu_h$ in \eqref{es2},and applying Lemma \ref{lemma1}, the triangle inequality, and Young's inequality, we obtain
\begin{equation*} 
\begin{split}
|\ell_2 (Q_h\bu-\bu_h, p)|\leq & Ch^k\|p\|_{k+1}\3bar  Q_h\bu-\bu_h \3bar \\
\leq & Ch^k\|p\|_{k+1}(\3bar  Q_h\bu- \bu  \3bar+  \3bar   \bu-\bu_h \3bar)
\\  \leq & Ch^k\|p\|_{k+1}(h^k\|\bu\|_{k+1}+  \3bar   \bu-\bu_h \3bar) 
 \\  \leq & C_1h^{2k}\|p\|^2_{k+1}+C_2 h^{2k}\|\bu\|^2_{k+1}+ Ch^k\|p\|_{k+1}  \3bar   \bu-\bu_h \3bar.
\end{split}
\end{equation*}

Estimate of $I_4$:    
From \eqref{pro3} and the second equation of \eqref{model}, we obtain
\begin{equation*} \label{ss2}
\begin{split}
 \sum_{T\in {\cal T}_h}(\nabla_w\cdot Q_h\bu,  p-p_h)_T 
=&\sum_{T\in {\cal T}_h}({\cal Q}_h(\nabla \cdot  \bu),   p-p_h)_T=0.
\end{split}
\end{equation*}
Using this identity  along with estimate \eqref{error1} (with $n=k$),  Lemma \ref{lemma2}, the identity $(\nabla_w \cdot \bu_h,  p-{\cal Q}_hp)_T=0$, the Cauchy-Schwarz inequality, \eqref{pro3},  the second equation in \eqref{model},  and Young's inequality, we get
 \begin{equation*} 
\begin{split}
&|\sum_{T\in {\cal T}_h}(\nabla_w \cdot(Q_h\bu-\bu_h), e_{p_h})_T|\\=&  |\sum_{T\in {\cal T}_h}(\nabla_w \cdot \bu_h,  p-p_h)_T  |\\
= &  |\sum_{T\in {\cal T}_h}(\nabla_w \cdot \bu_h,  p-{\cal Q}_hp)_T+ (\nabla_w \cdot \bu_h,  {\cal Q}_hp-p_h)_T |\\
= &  |\sum_{T\in {\cal T}_h}  (\nabla_w \cdot \bu_h,  {\cal Q}_hp-p_h)_T |\\
= &  |\sum_{T\in {\cal T}_h}  (\nabla_w \cdot (\bu_h-Q_h\bu),  {\cal Q}_hp-p_h)_T+(\nabla_w \cdot  Q_h\bu,  {\cal Q}_hp-p_h)_T |\\
= &  |\sum_{T\in {\cal T}_h}  (\nabla_w \cdot (\bu_h-Q_h\bu),  {\cal Q}_hp-p_h)_T+ ({\cal Q}_h(\nabla  \cdot   \bu),  {\cal Q}_hp-p_h)_T |
\\=&  |\sum_{T\in {\cal T}_h} (\nabla_w \cdot (\bu_h-Q_h\bu),  {\cal Q}_hp-p_h)_T  |\\
\leq & (\sum_{T\in {\cal T}_h} \|(\nabla_w \cdot(Q_h\bu-\bu)\| _T^2)^{\frac{1}{2}}  (\sum_{T\in {\cal T}_h} \|{\cal Q}_hp-p_h\| _T^2)^{\frac{1}{2}}  \\
\leq & C  h^k\|\bu\|_{k+1}h^k\|p\|_k  \\
\leq & C_1h^{2k}\|p\|^2_{k+1}+C_2 h^{2k}\|\bu\|^2_{k+1}.
\end{split}
\end{equation*}

Estimate of $I_5$:  Using the Cauchy-Schwarz inequality and estimate \eqref{erroresti1}, we obtain
\begin{equation*}
    \begin{split}
         \sum_{T\in {\cal T}_h} (\kappa^{-1}  e_{\bu_0}, \bu-Q_0\bu)_T& = \sum_{T\in {\cal T}_h} 
         (\kappa^{-1}  (\bu-\bu_0), \bu-Q_0\bu)_T \\
         & = \sum_{T\in {\cal T}_h} 
         (\kappa^{-1}  \bu, \bu-Q_0\bu)_T\\
            & = \sum_{T\in {\cal T}_h} 
         (\kappa^{-1}  (\bu-Q_0\bu), \bu-Q_0\bu)_T
         \\ &\leq \3bar \bu-Q_h\bu\3bar^2\leq   Ch^{2k} \|\bu\|^2_{k+1},
    \end{split}
\end{equation*}
  where we used the projection properties $(\bu_0, \bu-Q_0\bu)_T=0$ and $(Q_0\bu, \bu-Q_0\bu)_T=0$. 
  
  Substituting the bounds for $I_i$ for $i=1,\cdots, 5$ into \eqref{er}, we derive
 \begin{equation*} 
\begin{split}
 \3bar e_{\bu_h}\3bar^2 \leq  &Ch^k\|\bu\|_{k+1} \3bar e_{\bu_h}\3bar+Ch^k\|\bu\|_{k+1}(h^k\|\bu\|_{k+1}+  \3bar   \bu-\bu_h \3bar)\\&+  C_1h^{2k}\|p\|^2_{k+1}+C_2 h^{2k}\|\bu\|^2_{k+1}+ Ch^k\|p\|_{k+1}  \3bar   \bu-\bu_h \3bar. 
 \end{split}
\end{equation*}
   Thus, we obtain
\begin{equation}\label{euh}
   \3bar e_{\bu_h}\3bar\leq Ch^k(\|\bu\|_{k+1}+ \|p\|_{k}). 
\end{equation}

Finally, using the first equation of \eqref{erroreqn},  the identity $(\nabla_w \cdot\bv_h, {\cal Q}_h p-p )_T=0$, and the estimates \eqref{es1}, \eqref{es2}, along with the Cauchy-Schwarz inequality, we find
 \begin{equation*} 
\begin{split}
&|\sum_{T\in {\cal T}_h}(\nabla_w \cdot\bv_h, {\cal Q}_h p-p_h)_T|\\
\leq&|\sum_{T\in {\cal T}_h}(\nabla_w \cdot\bv_h, {\cal Q}_h p-p )_T+(\nabla_w \cdot\bv_h, p-p_h)_T|\\
\leq&
 |\ell_1(\bu, \bv_h)|+|\ell_2(  \bv_h, p)|+|(\nabla_w  e_{\bu_h}, \nabla_w\bv_h)|+|(\kappa^{-1}e_{\bu_0}, \bv_0)|\\
\leq &  Ch^k\|\bu\|_{k+1}\3bar  \bv_h\3bar+ Ch^k \|p\|_{k}\3bar \bv_h  \3bar +\3bar e_{\bu_h} \3bar \3bar \bv_h\3bar.
\end{split}
\end{equation*}
This, combining with the inf-sup condition \eqref{infsup} and the estimate \eqref{euh},  gives 
\begin{equation*} 
\begin{split}
\|{\cal Q}_h p-p_h \|\leq &C\frac{|\sum_{T\in {\cal T}_h}(\nabla_w \cdot\bv_h, {\cal Q}_h p-p_h)_T|}{\3bar \bv_h\3bar}\\
\leq & Ch^k\|\bu\|_{k+1}  + Ch^k \|p\|_{k}  +\3bar e_{\bu_h} \3bar 
\\
\leq & Ch^k(\|\bu\|_{k+1}  +   \|p\|_{k}). \end{split}
\end{equation*}
Combining this with \eqref{error1} (with $n=k$)  and the triangle inequality, we arrive at 
$$
\|e_{p_h}\|\leq \|{\cal Q}_h p-p_h\|+\|p-{\cal Q}_h p\|\leq Ch^k(\|\bu\|_{k+1}  +   \|p\|_{k}),
$$
which, together with \eqref{euh},  completes the proof.
\end{proof}

\section{    Error Estimates in $L^2$}
To obtain the error estimate in the $L^2$ norm, we utilize the standard duality  argument. Recall that the error in the velocity is denoted by $$e_{\bu_h}=\bu-\bu_h=\{e_{\bu_0}, e_{\bu_b}\}=\{\bu-\bu_0, \bu-\bu_b\}.$$
We introduce the quantity $\textbf{E}_h =Q_h\bu - \bu_h=\{\textbf{E}_0, \textbf{E}_b\}=\{Q_0\bu - \bu_0, Q_b\bu - \bu_b\}\in V_h^0$. To proceed, we consider the dual problem corresponding to the Brinkman system \eqref{model}. The goal is to find a pair $(\bw, q) \in [H^2(\Omega)]^d \times H^1(\Omega)$ satisfying:
\begin{equation}\label{dual}
\begin{split}
   - \Delta \bw+\kappa^{-1} \bw+\nabla q &=\textbf{E}_0, \qquad \text{in}\ \Omega,\\
\nabla \cdot\bw&=0,\qquad \text{in}\ \Omega,  \\
\bw&=0, \qquad \text{on}\ \partial\Omega.
    \end{split}
\end{equation}
We assume that the dual solution satisfies the regularity estimate:
\begin{equation}\label{regu2}
 \|\bw\|_2+\|q\|_1\leq C\|\textbf{E}_0\|.
 \end{equation}
 
 \begin{theorem}
Let $(\bu, p)\in [H^{k+1}(\Omega)]^d \times H^k(\Omega)$ be the exact solutions to the Brinkman problem \eqref{model}, and let
 $(\bu_h, p_h)\in V_h^0 \times  W_h$ be their numerical approximations obtained via the Stabilizer-Free Weak Galerkin Algorithm \ref{PDWG1}. Suppose the regularity condition \eqref{regu2} holds. Then, there exists a constant $C$ such that 
\begin{equation*}
\|\bu-\bu_0\|\leq Ch^{k+1}(\|\bu\|_{k+1}+\|p\|_k).
\end{equation*}
 \end{theorem}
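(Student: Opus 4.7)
The strategy is a standard duality argument. Since $\bu - \bu_0 = (\bu - Q_0\bu) + \textbf{E}_0$ and $\|\bu - Q_0\bu\| = O(h^{k+1})$ by \eqref{error2}, the task reduces to bounding $\|\textbf{E}_0\|$. I would test the dual equation \eqref{dual} against $\textbf{E}_0$ to get
\begin{equation*}
\|\textbf{E}_0\|^2 = (-\Delta \bw, \textbf{E}_0) + (\kappa^{-1}\bw, \textbf{E}_0) + (\nabla q, \textbf{E}_0).
\end{equation*}
Element-wise integration by parts, together with identity \eqref{2.4new} applied with $\bvarphi = {\cal Q}_h \nabla \bw$, identity \eqref{divnew} applied with $w = {\cal Q}_h q$, and the commutativity relations \eqref{pro3}, \eqref{pro4} (and using that $\textbf{E}_b$ is single-valued on interior faces and vanishes on $\partial\Omega$), will recast the Laplacian and pressure-gradient contributions as
\begin{equation*}
\sum_T (\textbf{E}_0, -\Delta \bw)_T = (\nabla_w \textbf{E}_h, \nabla_w Q_h\bw) - \ell_1(\bw, \textbf{E}_h),
\end{equation*}
\begin{equation*}
\sum_T (\textbf{E}_0, \nabla q)_T = -(\nabla_w \cdot \textbf{E}_h, {\cal Q}_h q) - \ell_2(\textbf{E}_h, q).
\end{equation*}

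Next I invoke the error equation \eqref{erroreqn} with $\bv_h = Q_h\bw \in V_h^0$. Since $\nabla \cdot \bw = 0$, \eqref{pro3} forces $\nabla_w \cdot Q_h\bw = 0$; decomposing $e_{\bu_h} = (\bu - Q_h\bu) + \textbf{E}_h$ then yields
\begin{equation*}
(\nabla_w \textbf{E}_h, \nabla_w Q_h\bw) + (\kappa^{-1}\textbf{E}_0, Q_0\bw) = \ell_1(\bu, Q_h\bw) + \ell_2(Q_h\bw, p) + R,
\end{equation*}
where $R = -(\nabla_w(\bu - Q_h\bu), \nabla_w Q_h\bw) - (\kappa^{-1}(\bu - Q_0\bu), Q_0\bw)$ is a projection residual controlled by Lemma \ref{lemma1} and \eqref{error2}. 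Substituting into the expression for $\|\textbf{E}_0\|^2$ produces
\begin{equation*}
\|\textbf{E}_0\|^2 = \ell_1(\bu, Q_h\bw) + \ell_2(Q_h\bw, p) - \ell_1(\bw, \textbf{E}_h) - \ell_2(\textbf{E}_h, q) - (\nabla_w \cdot \textbf{E}_h, {\cal Q}_h q) + (\kappa^{-1}(\bw - Q_0\bw), \textbf{E}_0) + R.
\end{equation*}
The divergence-pressure term is handled by noting that $\nabla_w \cdot \textbf{E}_h$ has zero mean over $\Omega$ (take $w=1$ in \eqref{div}) and is $L^2$-orthogonal to $W_h$ (from the second error equation), so ${\cal Q}_h q$ can be replaced by ${\cal Q}_h q - \tilde q_h$ with $\tilde q_h$ the piecewise $L^2$ projection of $q$ onto $P_{k-1}$; this gains one power of $h$ from the $H^1$-smoothness of $q$ and, combined with $\|\nabla_w \cdot \textbf{E}_h\| \leq C\|\textbf{E}_h\|_{1,h} \leq Ch^k(\|\bu\|_{k+1}+\|p\|_k)$ from \eqref{trinorm}, yields an $O(h^{k+1})$ bound.

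The principal technical obstacle lies in estimating $\ell_1(\bu, Q_h\bw)$ and $\ell_2(Q_h\bw, p)$ at the full $O(h^{k+1})$ rate, since a direct application of \eqref{es1}, \eqref{es2} combined with $\3bar Q_h\bw\3bar \leq C\|\bw\|_1$ would only yield $O(h^k)$. The remedy is a super-approximation bound $\|Q_b\bw - Q_0\bw\|_{\partial T} \leq Ch_T^{3/2}\|\bw\|_{2,T}$ that exploits $\bw \in [H^2(\Omega)]^d$ to save one factor of $h$ in the jump; coupled with trace-projection bounds of order $h^{k-1/2}$ on $({\cal Q}_h - I)\nabla\bu$ and $({\cal Q}_h - I)p$ arising from \eqref{error1}, \eqref{error3}, this gives $|\ell_1(\bu, Q_h\bw)| + |\ell_2(Q_h\bw, p)| \leq Ch^{k+1}(\|\bu\|_{k+1}+\|p\|_k)\|\bw\|_2$. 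The reversed-role terms $|\ell_1(\bw, \textbf{E}_h)| + |\ell_2(\textbf{E}_h, q)|$ are bounded via \eqref{es1}, \eqref{es2} with $\bw, q$ in place of $\bu, p$ together with the $H^1$-error estimate \eqref{trinorm} for $\|\textbf{E}_h\|_{1,h}$, giving the same $O(h^{k+1})$ order. Summing all contributions and invoking the regularity \eqref{regu2} $\|\bw\|_2 + \|q\|_1 \leq C\|\textbf{E}_0\|$ yields $\|\textbf{E}_0\|^2 \leq Ch^{k+1}(\|\bu\|_{k+1}+\|p\|_k)\|\textbf{E}_0\|$, whence the claim follows after dividing by $\|\textbf{E}_0\|$ and applying the triangle inequality.
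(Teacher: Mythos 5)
Your route is essentially the paper's: the same duality argument, the same use of the identities \eqref{term1}--\eqref{termm} to rewrite $(-\Delta\bw,\textbf{E}_0)$ and $(\nabla q,\textbf{E}_0)$, the same substitution of the error equation with $\bv_h=Q_h\bw$, the same super-approximation bound $\|Q_b\bw-Q_0\bw\|_{\partial T}\leq Ch_T^{3/2}\|\bw\|_{2,T}$ to lift $\ell_1(\bu,Q_h\bw)$ and $\ell_2(Q_h\bw,p)$ to order $h^{k+1}$, the same reversed-role treatment of $\ell_1(\bw,\textbf{E}_h)$ and $\ell_2(\textbf{E}_h,q)$, and the same orthogonality device for the divergence--pressure term. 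Those steps all check out.

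There is, however, one genuine gap: the first piece of your residual $R$, namely $(\nabla_w(\bu-Q_h\bu),\nabla_w Q_h\bw)$. You assert it is ``controlled by Lemma \ref{lemma1},'' but Cauchy--Schwarz together with Lemma \ref{lemma1} only gives $\3bar \bu-Q_h\bu\3bar\,\|\nabla_w Q_h\bw\|\leq Ch^{k}\|\bu\|_{k+1}\|\bw\|_{1}$, which is one power of $h$ short and would drag the whole estimate back down to $O(h^{k})$. This is exactly the term $I_4$ in the paper's proof, and it requires its own super-approximation argument: from the definition \eqref{2.4}, $\nabla_w(\bu-Q_h\bu)$ is $L^2(T)$-orthogonal to constant tensors (for constant $\bvarphi$ one has $\nabla\cdot\bvarphi=0$ and $\langle \bu-Q_b\bu,\bvarphi\cdot\bn\rangle_{\partial T}=0$ by the definition of $Q_b$), so ${\cal Q}_h\nabla\bw$ may be replaced by ${\cal Q}_h\nabla\bw-Q^0({\cal Q}_h\nabla\bw)$, whose $L^2$ norm is $O(h\|\bw\|_2)$; this recovers the missing factor of $h$. (The second piece of $R$ is harmless: with $\kappa$ constant, $(\kappa^{-1}(\bu-Q_0\bu),Q_0\bw)_T=0$ by projection orthogonality.) A secondary caution: your reduction of $(\nabla_w\textbf{E}_h,\nabla_w\bw)$ to $(\nabla_w\textbf{E}_h,\nabla_w Q_h\bw)$ leans on the exact commutation \eqref{pro4}; if that identity is not exact for the enriched gradient degree $r>k$, the discrepancy $(\nabla_w(\bw-Q_h\bw),\nabla_w(\bu-\bu_h))$ must be retained---it is the paper's $I_3$ and is benign, being $O(h\|\bw\|_2)\cdot O(h^k)$ by Lemma \ref{lemma1} applied to $\bw$ and the $H^1$ estimate \eqref{trinorm}.
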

 
 \begin{proof}
We test the first equation of the dual problem \eqref{dual} with  $\textbf{E}_0$ to obtain:
 \begin{equation}\label{e1}
 \begin{split}
 \|\textbf{E}_0\|^2 =&(- \Delta \bw+\kappa^{-1}\bw+\nabla q, \textbf{E}_0).
 \end{split}
 \end{equation}

 By applying identity \eqref{term1} with  $\bu=\bw$ and  $\bv_h=\textbf{E}_h$, we obtain:
$$
\sum_{T\in {\cal T}_h}(- \Delta \bw, \textbf{E}_0)_T=\sum_{T\in {\cal T}_h} (\nabla_w\bw, \nabla_w \textbf{E}_h)_T-\langle \textbf{E}_b-\textbf{E}_0, ({\cal Q}_h-I)\nabla \bw\cdot\bn \rangle_{\partial T}.
$$

Similarly, using \eqref{termm} with $p=q$ and $\bv_h=\textbf{E}_h$,  we have:
$$
\sum_{T\in {\cal T}_h} (\nabla q, \textbf{E}_0)=\sum_{T\in {\cal T}_h} -(\nabla_w \cdot \textbf{E}_h, {\cal Q}_hq)_T+\langle ({\cal Q}_h-I)q, (\textbf{E}_b-\textbf{E}_0)\cdot\bn\rangle_{\partial T}.
$$
Substituting these expressions into \eqref{e1} leads to:
\begin{equation}\label{e2}
 \begin{split}
 \|\textbf{E}_0\|^2 =&\sum_{T\in {\cal T}_h} (\nabla_w\bw, \nabla_w \textbf{E}_h)_T-\langle \textbf{E}_b-\textbf{E}_0, ({\cal Q}_h-I)\nabla \bw\cdot\bn \rangle_{\partial T}\\&-(\nabla_w \cdot \textbf{E}_h, {\cal Q}_hq)_T+\langle ({\cal Q}_h-I)q, (\textbf{E}_b-\textbf{E}_0)\cdot\bn\rangle_{\partial T}+(\kappa^{-1} \bw, \textbf{E}_0)_T.
 \end{split}
 \end{equation}

Using the second equation in \eqref{dual} along with identity \eqref{pro3}, we find:
  \begin{equation}\label{add}
 \begin{split}
  \sum_{T\in {\cal T}_h}(\nabla_w\cdot Q_h\bw, p-p_h)_T=  \sum_{T\in {\cal T}_h} ({\cal Q}_h(\nabla\cdot \bw), p-p_h)_T=0.
  \end{split}
 \end{equation}

Hence, applying \eqref{add} and the error equation \eqref{erroreqn} with $\bv_h=Q_h\bw$, we conclude:
  \begin{equation}\label{e3}
 \begin{split}
 &\|\textbf{E}_0\|^2 \\=&\sum_{T\in {\cal T}_h} (\nabla_w\bw, \nabla_w (\bu-\bu_h))_T -(\nabla_w\bw, \nabla_w (\bu-Q_h\bu))_T\\&-\langle \textbf{E}_b-\textbf{E}_0, ({\cal Q}_h-I)\nabla \bw\cdot\bn \rangle_{\partial T} -(\nabla_w \cdot (\bu-\bu_h), {\cal Q}_hq)_T\\&-(\nabla_w \cdot (Q_h\bu-\bu), {\cal Q}_hq)_T+\langle ({\cal Q}_h-I)q, (\textbf{E}_b-\textbf{E}_0)\cdot\bn\rangle_{\partial T}\\
 &+ (\kappa^{-1} \bw, Q_0\bu-\bu)_T+(\kappa^{-1} \bw,  \bu-\bu_0)_T\\
 =&\sum_{T\in {\cal T}_h} (\nabla_wQ_h\bw, \nabla_w (\bu-\bu_h))_T+(\nabla_w (\bw-Q_h\bw), \nabla_w (\bu-\bu_h))_T\\
 &- (\nabla_w  \bw, \nabla_w (\bu-Q_h\bu))_T-\langle \textbf{E}_b-\textbf{E}_0, ({\cal Q}_h-I)\nabla \bw\cdot\bn \rangle_{\partial T}\\&-(\nabla_w \cdot (\bu-\bu_h), {\cal Q}_hq)_T-(\nabla_w \cdot (Q_h\bu-\bu), {\cal Q}_hq)_T\\
 &+\langle ({\cal Q}_h-I)q, (\textbf{E}_b-\textbf{E}_0)\cdot\bn\rangle_{\partial T} -(\nabla_w\cdot Q_h\bw, p-p_h)_T \\&+ (\kappa^{-1} \bw, Q_0\bu-\bu)_T+(\kappa^{-1} Q_0 \bw,  \bu-\bu_0)_T\\
 =& \ell_1(\bu, Q_h\bw) +\ell_2(Q_h\bw, p) +\sum_{T\in {\cal T}_h}(\nabla_w (\bw-Q_h\bw), \nabla_w (\bu-\bu_h))_T\\
 &- (\nabla_w  \bw, \nabla_w (\bu-Q_h\bu))_T-\langle \textbf{E}_b-\textbf{E}_0, ({\cal Q}_h-I)\nabla \bw\cdot\bn \rangle_{\partial T}\\&-(\nabla_w \cdot (\bu-\bu_h),  q)_T-(\nabla_w \cdot (Q_h\bu-\bu),  q)_T\\
 &+\langle ({\cal Q}_h-I)q, (\textbf{E}_b-\textbf{E}_0)\cdot\bn\rangle_{\partial T}+ (\kappa^{-1}  \bw, Q_0\bu-\bu)_T\\
 =& \sum_{i=1}^9 I_i.
 \end{split}
 \end{equation}

Each term $I_i$ for $i=1, \cdots, 9$ is estimated as follows:
 
Estimate for $I_1$:  Applying the Cauchy-Schwarz inequality, the trace inequality \eqref{tracein}, estimate \eqref{error2} with  $m=1$,and estimate \eqref{error3} with  $n=k$, we obtain:
\begin{equation*} 
 \begin{split}
&|\ell_1(\bu, Q_h\bw)|\\=&|\sum_{T\in {\cal T}_h}  
  \langle Q_b\bw-Q_0\bw,   ({\cal Q}_h-I) \nabla \bu \cdot \bn \rangle_{\partial T}|\\
  \leq & (\sum_{T\in {\cal T}_h}  
  \| Q_b\bw-Q_0\bw\|^2_{\partial T})^
{\frac{1}{2}} (\sum_{T\in {\cal T}_h}\| ({\cal Q}_h-I) \nabla \bu \cdot \bn \|^2_{\partial T})
\\
\leq & (\sum_{T\in {\cal T}_h}  
  h_T^{-1}\|  \bw-Q_0\bw\|^2_{T}+h_T\|  \bw-Q_0\bw\|^2_{1, T})^
{\frac{1}{2}}\\
&\cdot(\sum_{T\in {\cal T}_h}  h_T^{-1}\| ({\cal Q}_h-I) \nabla \bu \cdot \bn \|^2_{T}+h_T \| ({\cal Q}_h-I) \nabla \bu \cdot \bn \|^2_{1, T})\\
\leq & Ch^{-1}h^2\|\bw\|_2 h^k\|\bu\|_{k+1}\leq Ch^{k+1}\|\bw\|_2  \|\bu\|_{k+1}.
  \end{split}
 \end{equation*}
Estimate for  $I_2$: Using the Cauchy-Schwarz inequality, the trace inequality \eqref{tracein}, estimate \eqref{error1} with  $n=k$,  and estimate \eqref{error2} with $m=1$, we get
\begin{equation*} 
 \begin{split}
&|\ell_2(Q_h\bw, p)| \\=&|\sum_{T\in {\cal T}_h} -\langle ({\cal Q}_h -I)p, (Q_b\bw-Q_0\bw)\cdot \bn \rangle_{\partial T}|\\
\leq & (\sum_{T\in {\cal T}_h}\| ({\cal Q}_h -I)p\|_{\partial T}^2)^\frac{1}{2}(\sum_{T\in {\cal T}_h} \|Q_b\bw-Q_0\bw)\cdot \bn \|_{\partial T}^2)^\frac{1}{2}\\
\leq & (\sum_{T\in {\cal T}_h}h_T^{-1}\| ({\cal Q}_h -I)p\|_{T}^2+h_T\| ({\cal Q}_h -I)p\|_{1, T}^2)^\frac{1}{2}\\&\cdot (\sum_{T\in {\cal T}_h} h_T^{-1}\|(\bw-Q_0\bw)\cdot \bn \|_{T}^2+h_T \|(\bw-Q_0\bw)\cdot \bn \|_{1, T}^2)^\frac{1}{2}\\
\leq &Ch^{-1}h^2\|\bw\|_2 h^k\|p\|_{k} \leq Ch^{k+1} \|\bw\|_2  \|p\|_{k}.
 \end{split}
 \end{equation*} 
Estimate for  $I_3$: Employing the Cauchy-Schwarz inequality, along with estimates \eqref{trinorm} and \eqref{erroresti1} (with $k=1$),  we derive
\begin{equation*} 
 \begin{split}
&|\sum_{T\in {\cal T}_h}(\nabla_w (\bw-Q_h\bw), \nabla_w (\bu-\bu_h))_T|\\
\leq & \3bar \bw-Q_h\bw\3bar \3bar \bu-\bu_h\3bar\\ \leq & Ch\|\bw\|_{2}  h^k(\|\bu\|_{k+1}+\|p\|_k)\leq Ch^{k+1}\|\bw\|_{2}   (\|\bu\|_{k+1}+\|p\|_k).
 \end{split}
 \end{equation*} 
Estimate for $I_4$:  Let 
 $Q^0$ denote  the $L^2$ projection onto $[P_0(T)]^{d\times d}$. For any $T\in {\cal T}_h$, it follows from \eqref{2.4} that
$$
(Q^0 (\nabla_w  \bw), \nabla_w (\bu-Q_h\bu))_T= -( \bu-Q_0\bu, \nabla\cdot(Q^0(\nabla_w  \bw)))_T+\langle \bu-Q_b\bu, Q^0(\nabla_w  \bw) \cdot\bn\rangle_{\partial T}=0.
$$
Using this identity along with the Cauchy-Schwarz inequality,   \eqref{pro1}, and \eqref{erroresti1}, we obtain:
\begin{equation*}
\begin{split}
&|\sum_{T\in {\cal T}_h}(\nabla_w  \bw, \nabla_w (\bu-Q_h\bu))_T| \\
=&|\sum_{T\in {\cal T}_h}(\nabla_w  \bw-Q^0 (\nabla_w  \bw), \nabla_w (\bu-Q_h\bu))_T| \\
=&|\sum_{T\in {\cal T}_h}({\cal Q}_h(\nabla   \bw)-Q^0 ({\cal Q}_h(\nabla   \bw)), \nabla_w (\bu-Q_h\bu))_T|  \\
\leq & (\sum_{T\in {\cal T}_h}\|{\cal Q}_h(\nabla   \bw)-Q^0 ({\cal Q}_h(\nabla   \bw))\|_T^2)^\frac{1}{2}(\sum_{T\in {\cal T}_h}\| \nabla_w (\bu-Q_h\bu)\|_T^2)^\frac{1}{2}\\
\leq & Ch\|{\cal Q}_h(\nabla   \bw)\|_1 h^k\|\bu\|_{k+1}
\\ \leq & Ch^{k+1}\|\bw\|_2  \|\bu\|_{k+1}.
 \end{split}
 \end{equation*} 
Estimate for $I_5$:  Applying the Cauchy-Schwarz inequality, trace inequality \eqref{tracein}, norm equivalence \eqref{normeq}, estimate \eqref{error3} with $n=1$, the triangle inequality, and the error estimates \eqref{erroresti1} and \eqref{trinorm}, we have:
\begin{equation*}
\begin{split}
&|\sum_{T\in {\cal T}_h}\langle \textbf{E}_b-\textbf{E}_0, ({\cal Q}_h-I)\nabla \bw\cdot\bn \rangle_{\partial T}|\\
\leq & (\sum_{T\in {\cal T}_h}h_T^{-1}\|\textbf{E}_b-\textbf{E}_0\|_{\partial T}^2)^\frac{1}{2}  (\sum_{T\in {\cal T}_h} h_T\|({\cal Q}_h-I)\nabla \bw\cdot\bn \|_{\partial T}^2)^\frac{1}{2}\\
\leq & \|\textbf{E}_h\|_{1, h} (\sum_{T\in {\cal T}_h} \|({\cal Q}_h-I)\nabla \bw\cdot\bn \|_{T}^2+h_T^2\|({\cal Q}_h-I)\nabla \bw\cdot\bn \|_{1,T}^2)^\frac{1}{2}\\
 \leq & \3bar \textbf{E}_h\3bar  h\|\bw\|_2
 \\
  \leq & (\3bar Q_h\bu-\bu\3bar+\3bar \bu-\bu_h\3bar ) h\|\bw\|_2\\
 \leq & Ch^{k+1}(\|\bu\|_{k+1}+\|p\|_k)\|\bw\|_2.
 \end{split}
 \end{equation*} 
Estimate for  $I_6$: From the second equation in \eqref{model} and the properties \eqref{pro2}–\eqref{pro3}, we have:
\begin{equation*} 
    \nabla_w \cdot \bu={\cal Q}_h \nabla  \cdot \bu=0, \qquad \nabla_w \cdot Q_h\bu={\cal Q}_h \nabla  \cdot \bu=0.
\end{equation*}
This gives \begin{equation}\label{qq}\nabla_w \cdot  \bu=\nabla_w \cdot Q_h\bu =0.\end{equation}
Let  ${\cal Q}_h^{k-1}$ denote the $L^2$ projection onto $P_{k-1}(T)$.  
Using the second equation in \eqref{erroreqn} by letting $q_h={\cal Q}_h^{k-1}q\in W_h$, together with the Cauchy-Schwarz inequality, the estimate \eqref{error1} with $n=1$,   \eqref{qq}, and the error bound \eqref{erroresti2}, we obtain:
\begin{equation*}
\begin{split}
& |\sum_{T\in {\cal T}_h}|(\nabla_w \cdot (\bu-\bu_h),  q)_T|\\
& |\sum_{T\in {\cal T}_h}|(\nabla_w \cdot (\bu-\bu_h),  q-{\cal Q}_h^{k-1}q )_T|\\=&|\sum_{T\in {\cal T}_h}| ( \nabla_w \cdot (Q_h\bu-\bu_h),  q-{\cal Q}_h^{k-1}q )_T|\\
\leq & (\sum_{T\in {\cal T}_h}\|\nabla_w \cdot (Q_h\bu-\bu_h)\|_T^2)^{\frac{1}{2}}(\sum_{T\in {\cal T}_h}\|q-{\cal Q}_h^{k-1}q\|_T^2)^{\frac{1}{2}}\\
\leq & Ch^{k}\|\bu\|_{k+1}h \|q\|_1.
\end{split}
 \end{equation*} 
Estimate for  $I_7$:  Let  $Q^0$ be the  $L^2$ projection onto $P_0(T)$. From \eqref{div}, for any $T\in {\cal T}_h$, we have:
$$
(Q^0 q, \nabla_w \cdot ( Q_h\bu-\bu ))_T= -( Q_0\bu- \bu, \nabla (Q^0 q))_T+\langle  Q_b\bu-\bu, Q^0 q\cdot\bn\rangle_{\partial T}=0.
$$
Using this identity along with the Cauchy-Schwarz inequality and the estimate \eqref{erroresti2}, it follows that:
\begin{equation*}
\begin{split}
& |\sum_{T\in {\cal T}_h}(\nabla_w \cdot (Q_h\bu-\bu),  q)_T|\\
\leq &  |\sum_{T\in {\cal T}_h}(\nabla_w \cdot (Q_h\bu-\bu),  q-Q^0q)_T|
\\
\leq & (\sum_{T\in {\cal T}_h}\|\nabla_w \cdot (Q_h\bu-\bu)\|_T^2)^\frac{1}{2}  (\sum_{T\in {\cal T}_h}\| q-Q^0q\|_T^2)^\frac{1}{2}\\
\leq & Ch^k\|\bu\|_{k+1 } h\|q\|_1.
 \end{split}
 \end{equation*} 
Estimate for  $I_8$:  By the Cauchy-Schwarz inequality, the trace inequality \eqref{tracein}, norm equivalence \eqref{normeq}, estimate \eqref{error1} with $n=1$, the triangle inequality, and the error bounds \eqref{erroresti1} and \eqref{trinorm}, we obtain:
\begin{equation*}
\begin{split}
&|\sum_{T\in {\cal T}_h}\langle ({\cal Q}_h-I)q, (\textbf{E}_b-\textbf{E}_0)\cdot\bn\rangle_{\partial T}|\\
 \leq&  (\sum_{T\in {\cal T}_h}h_T\|({\cal Q}_h-I)q\|_{\partial T}  ^2)^\frac{1}{2}  (\sum_{T\in {\cal T}_h}h_T^{-1}\|(\textbf{E}_b-\textbf{E}_0)\cdot\bn\|_{\partial T}^2)^\frac{1}{2} \\
  \leq&  (\sum_{T\in {\cal T}_h} \|({\cal Q}_h-I)q\|_{T}  ^2+h_T^2\|({\cal Q}_h-I)q\|_{1, T}  ^2)^\frac{1}{2}  \|\textbf{E}_h\|_{1, h}\\
  \leq &  Ch\|q\|_1 (\3bar Q_h\bu-\bu\3bar+\3bar  \bu-\bu_h\3bar)\\
   \leq &  Ch\|q\|_1 h^k(\|\bu\|_{k+1}+\|p\|_k). \\
   \end{split}
 \end{equation*} 
Estimate for $I_9$:  Applying the Cauchy-Schwarz inequality and estimate \eqref{error2}  with $m=k$ yields:
\begin{equation*}
    \begin{split}
      | \sum_{T\in {\cal T}_h} (\kappa^{-1} \bw, Q_0\bu-\bu)_T| 
      \leq & \Big(\sum_{T\in {\cal T}_h}\|\kappa^{-1}  \bw \|_T^2\Big)^{\frac{1}{2}}\Big(\sum_{T\in {\cal T}_h}\|Q_0\bu-\bu\|_T^2\Big)^{\frac{1}{2}}\\
      \leq & C \|\bw\|_0 h^{k+1}\|\bu\|_{k+1}\\
      \leq & Ch^{k+1}\|\bw\|_2  \|\bu\|_{k+1}.
    \end{split}
\end{equation*}
Substituting the estimates for $I_i$ ($i=1, \cdots, 9$) into \eqref{e3} and applying the regularity assumption \eqref{regu2}, we conclude:
\begin{equation*}
\|\textbf{E}_0\|\leq Ch^{k+1}(\|\bu\|_{k+1}+\|p\|_k).
\end{equation*}
Using the triangle inequality then yields the final result:
\begin{equation*}
\|\bu-\bu_0\|\leq \|\bu-Q_0\bu\|+\|\textbf{E}_0\|\leq Ch^{k+1}(\|\bu\|_{k+1}+\|p\|_k).
\end{equation*}

This completes the proof of the theorem. 
\end{proof}

\section{Numerical experiments}
In the 2D test,  we solve the Brinkman problem \eqref{weak} on the unit square domain $\Omega=
  (0,1)\times (0,1)$, where $\kappa=1$.
The exact solution is chosen as
\an{\label{s-2d} \b u=\p{ -8 (x^2-2 x^3 + x^4 ) ( y -3 y^2 + 2y^3 )\\
   \ \, 8 ( y -3 x^2 + 2 x^3 ) (x^2-2 x^3 + x^4 ) }, \quad  p=(x-\frac 12)^3.  
}

In the first computation, we compute the weak Galerkin finite element solutions by
  the algorithm \eqref{WG}, on triangular meshes shown in Figure \ref{f-2-1}.
We use the stabilizer-free method where we take $r=k+1$  in \eqref{wg-k} in computing the weak
  gradient.
Naturally, we take $r=k-1$  in \eqref{wd-k} in computing the weak
  divergence.
The results are listed in Table \ref{t1} where we have the
  optimal order of convergence for all variables and in all norms.
  
\begin{figure}[H]
 \begin{center}\setlength\unitlength{1.0pt}
\begin{picture}(380,120)(0,0)
  \put(15,108){$G_1$:} \put(125,108){$G_2$:} \put(235,108){$G_3$:} 
  \put(0,-420){\includegraphics[width=380pt]{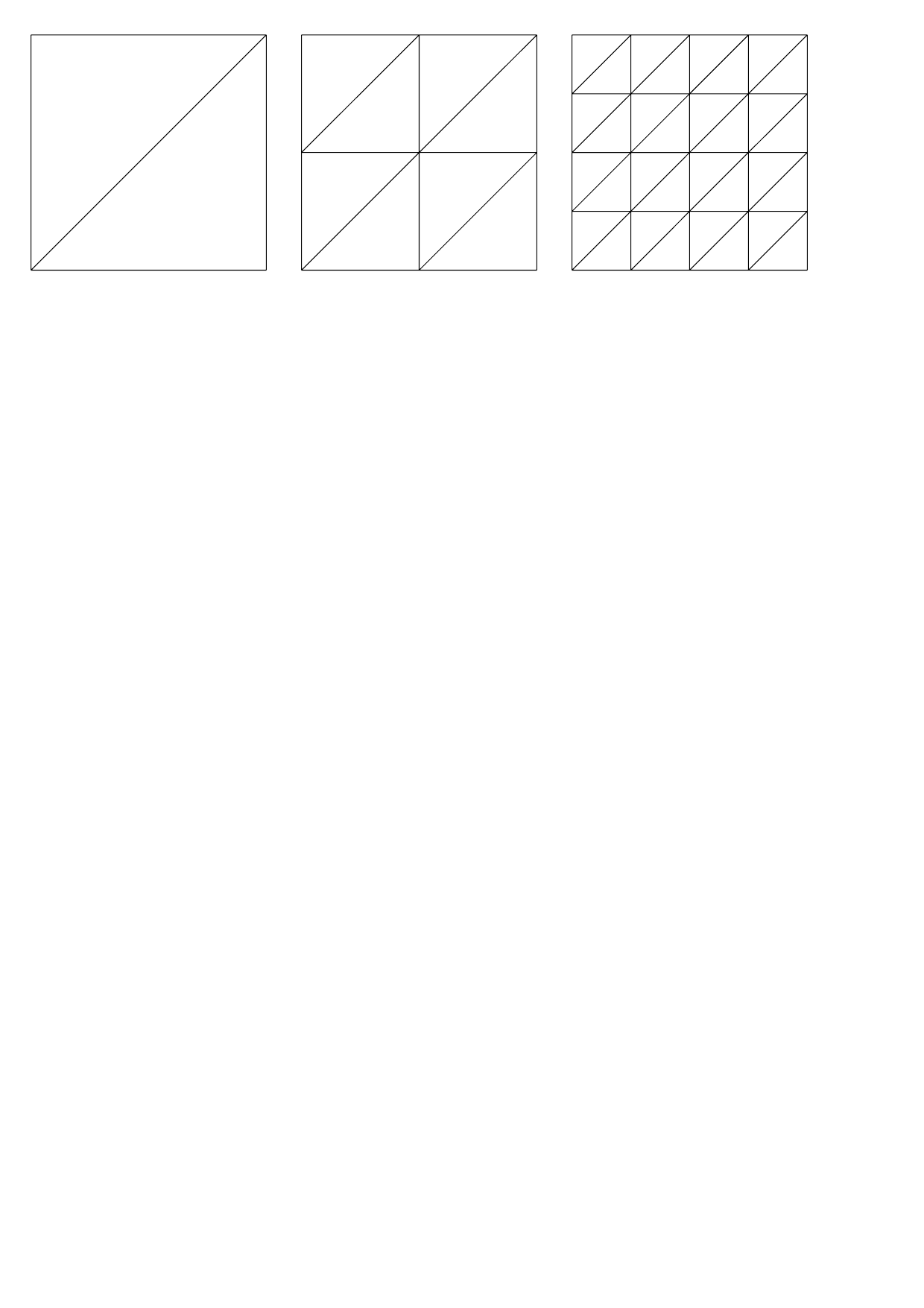}}  
 \end{picture}\end{center}
\caption{The triangular meshes for the computation in Table \ref{t1}. }\label{f-2-1}
\end{figure}

  \begin{table}[H]
  \caption{ Error profile for computing \eqref{s-2d} on meshes shown in Figure \ref{f-2-1}.} \label{t1}
\begin{center}  
   \begin{tabular}{c|rr|rr|rr}  
 \hline 
$G_i$ &  $ \|Q_h\b u - \b u_h \| $ & $O(h^r)$ &  $ \3bar Q_h \b u-\bw\3bar $ & $O(h^r)$ 
   &  $ \| p -p_h \| $ & $O(h^r)$   \\ \hline 
&\multicolumn{6}{c}{By the $P_1$-$P_1$/$P_0$ weak Galerkin finite element \eqref{V-h} and \eqref{W-h} }\\
 \hline 
 5&     0.317E-3 &  1.9&     0.185E-1 &  1.0&     0.850E-2 &  0.9 \\
 6&     0.808E-4 &  2.0&     0.926E-2 &  1.0&     0.431E-2 &  1.0 \\
 7&     0.203E-4 &  2.0&     0.463E-2 &  1.0&     0.216E-2 &  1.0 \\
 \hline 
&\multicolumn{6}{c}{By the $P_2$-$P_2$/$P_1$ weak Galerkin finite element \eqref{V-h} and \eqref{W-h} }\\
 \hline  
 5&     0.474E-5 &  3.1&     0.106E-2 &  2.0&     0.521E-3 &  2.0 \\
 6&     0.573E-6 &  3.0&     0.267E-3 &  2.0&     0.129E-3 &  2.0 \\
 7&     0.708E-7 &  3.0&     0.668E-4 &  2.0&     0.320E-4 &  2.0 \\
 \hline 
&\multicolumn{6}{c}{By the $P_3$-$P_3$/$P_2$ weak Galerkin finite element \eqref{V-h} and \eqref{W-h} }\\
 \hline  
 4&     0.319E-5 &  4.0&     0.389E-3 &  2.8&     0.160E-3 &  2.8 \\
 5&     0.194E-6 &  4.0&     0.505E-4 &  2.9&     0.199E-4 &  3.0 \\
 6&     0.120E-7 &  4.0&     0.641E-5 &  3.0&     0.239E-5 &  3.1 \\
 \hline 
&\multicolumn{6}{c}{By the $P_4$-$P_4$/$P_3$ weak Galerkin finite element \eqref{V-h} and \eqref{W-h} }\\
 \hline  
 3&     0.685E-5 &  4.5&     0.465E-3 &  3.5&     0.201E-3 &  3.6 \\
 4&     0.231E-6 &  4.9&     0.314E-4 &  3.9&     0.123E-4 &  4.0 \\
 5&     0.743E-8 &  5.0&     0.204E-5 &  3.9&     0.722E-6 &  4.1 \\
\hline 
\end{tabular} \end{center}  \end{table}

We compute again the weak Galerkin finite element solutions by
  the algorithm \eqref{WG}, but on non-convex polygon meshes shown in Figure \ref{f-2-2}.
We use the stabilizer-free method where we take $r=k+3$  in \eqref{wg-k} in computing the weak
  gradient. Again we take $r=k-1$  in \eqref{wd-k} in computing the weak
  divergence.
The results are listed in Table \ref{t2} where we have the
  optimal order of convergence for all variables and in all norms.
  
\begin{figure}[H]
 \begin{center}\setlength\unitlength{1.0pt}
\begin{picture}(380,120)(0,0)
  \put(15,108){$G_1$:} \put(125,108){$G_2$:} \put(235,108){$G_3$:} 
  \put(0,-420){\includegraphics[width=380pt]{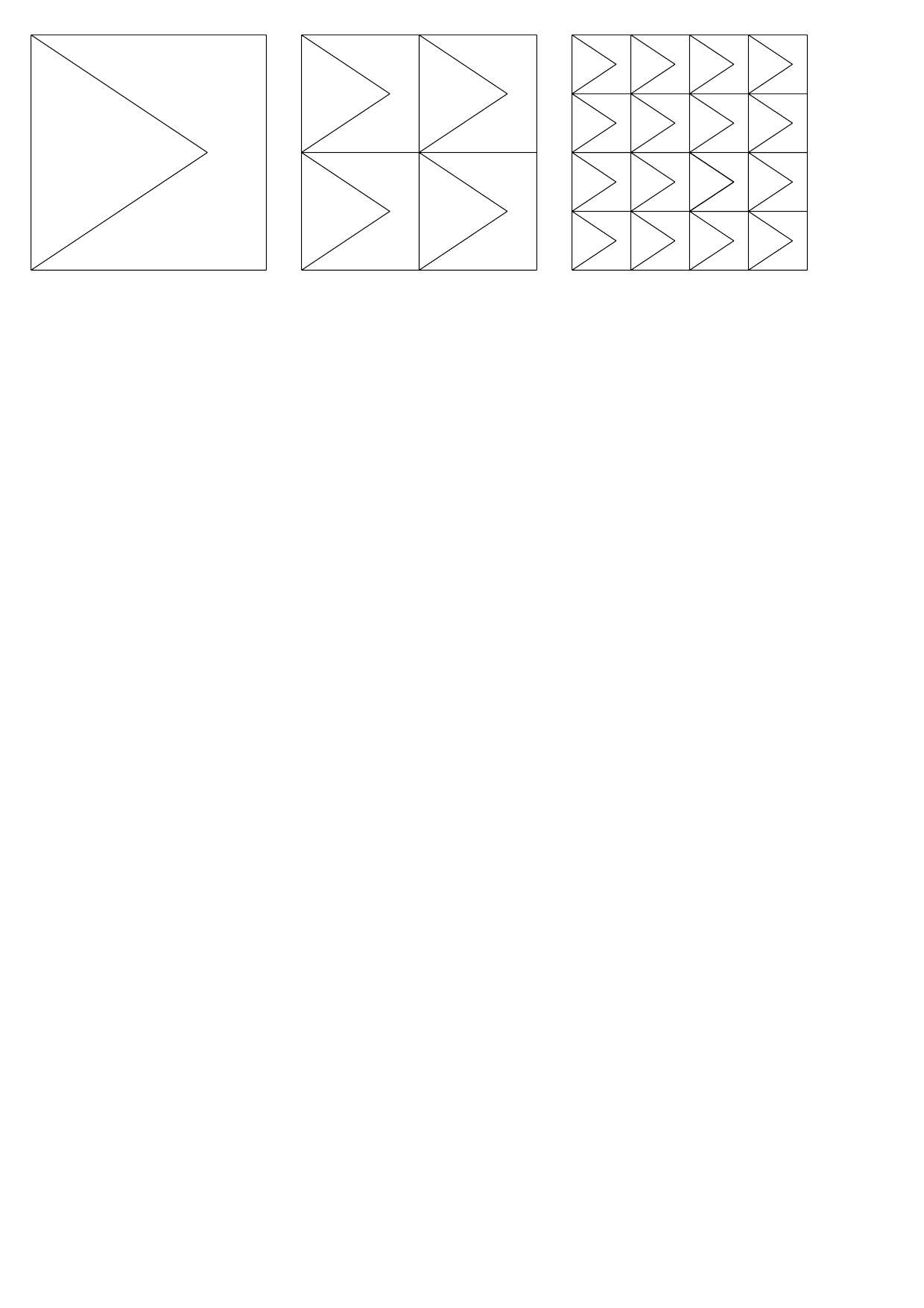}}  
 \end{picture}\end{center}
\caption{The non-convex polygon meshes for the computation in Table \ref{t2}. }\label{f-2-2}
\end{figure}

  \begin{table}[H]
  \caption{ Error profile for computing \eqref{s-2d} on meshes shown in Figure \ref{f-2-2}.} \label{t2}
\begin{center}  
   \begin{tabular}{c|rr|rr|rr}  
 \hline 
$G_i$ &  $ \|Q_h\b u - \b u_h \| $ & $O(h^r)$ &  $ \3bar Q_h \b u-\bw\3bar $ & $O(h^r)$ 
   &  $ \| p -p_h \| $ & $O(h^r)$   \\ \hline 
&\multicolumn{6}{c}{By the $P_1$-$P_1$/$P_0$ weak Galerkin finite element \eqref{V-h} and \eqref{W-h} }\\
 \hline 
 5&     0.186E-2 &  1.7&     0.519E-1 &  1.1&     0.897E-2 &  1.3 \\
 6&     0.501E-3 &  1.9&     0.250E-1 &  1.1&     0.290E-2 &  1.6 \\
 7&     0.128E-3 &  2.0&     0.124E-1 &  1.0&     0.984E-3 &  1.6 \\
 \hline 
&\multicolumn{6}{c}{By the $P_2$-$P_2$/$P_1$ weak Galerkin finite element \eqref{V-h} and \eqref{W-h} }\\
 \hline  
 4&     0.124E-3 &  3.3&     0.155E-1 &  2.8&     0.484E-2 &  2.1 \\
 5&     0.117E-4 &  3.4&     0.364E-2 &  2.1&     0.106E-2 &  2.2 \\
 6&     0.121E-5 &  3.3&     0.911E-3 &  2.0&     0.251E-3 &  2.1 \\
 \hline 
&\multicolumn{6}{c}{By the $P_3$-$P_3$/$P_2$ weak Galerkin finite element \eqref{V-h} and \eqref{W-h} }\\
 \hline  
 2&     0.923E-2 &  6.0&     0.821E+0 &  4.5&     0.202E-1 &  1.2 \\
 3&     0.179E-3 &  5.7&     0.269E-1 &  4.9&     0.471E-2 &  2.1 \\
 4&     0.683E-5 &  4.7&     0.140E-2 &  4.3&     0.605E-3 &  3.0 \\
 \hline 
&\multicolumn{6}{c}{By the $P_4$-$P_4$/$P_3$ weak Galerkin finite element \eqref{V-h} and \eqref{W-h} }\\
 \hline  
 1&     0.290E+0 &  0.0&     0.115E+2 &  0.0&     0.523E-1 &  0.0 \\
 2&     0.229E-2 &  7.0&     0.226E+0 &  5.7&     0.126E-1 &  2.0 \\
 3&     0.238E-4 &  6.6&     0.409E-2 &  5.8&     0.989E-3 &  3.7 \\
\hline 
\end{tabular} \end{center}  \end{table}

In Table \ref{t-3}, we compute the weak Galerkin finite element solutions 
    on non-convex polygon meshes shown in Figure \ref{f-2-5}.
We use the stabilizer-free method where we take $r=k+2$  in \eqref{wg-k} in computing the weak
  gradient. 
We get the  optimal order of convergence for all variables and in all norms.

\begin{figure}[H]
 \begin{center}\setlength\unitlength{1.0pt}
\begin{picture}(380,120)(0,0)
  \put(15,108){$G_1$:} \put(125,108){$G_2$:} \put(235,108){$G_3$:} 
  \put(0,-420){\includegraphics[width=380pt]{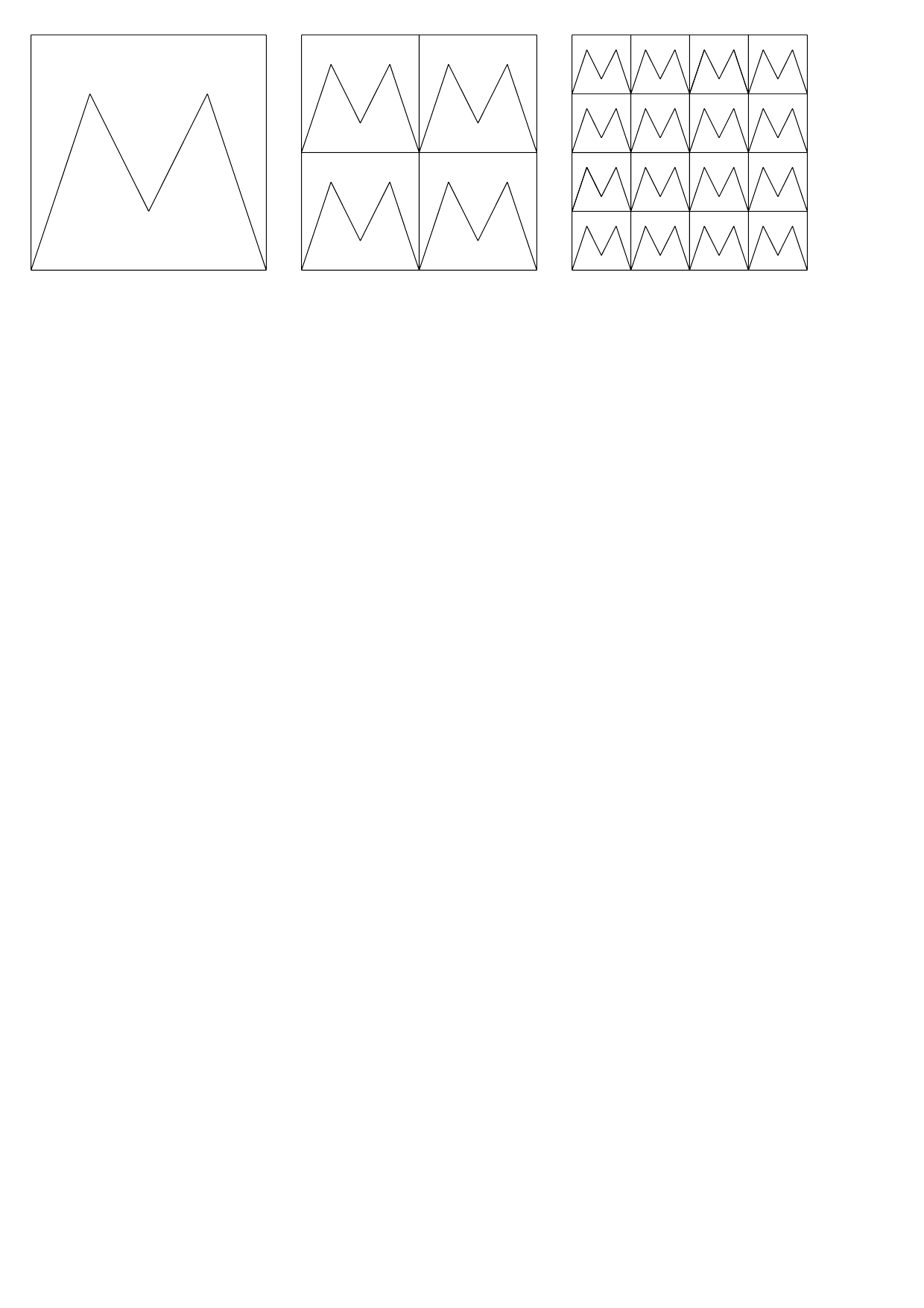}}  
 \end{picture}\end{center}
\caption{The non-convex polygon meshes for the computation in Table \ref{t-3}. }\label{f-2-5}
\end{figure}

  \begin{table}[H]
  \caption{ Error profile for computing \eqref{s-2d} on meshes shown in Figure \ref{f-2-5}.} \label{t-3}
\begin{center}  
   \begin{tabular}{c|rr|rr|rr}  
 \hline 
$G_i$ &  $ \|Q_h\b u - \b u_h \| $ & $O(h^r)$ &  $ \3bar Q_h \b u-\bw\3bar $ & $O(h^r)$ 
   &  $ \| p -p_h \| $ & $O(h^r)$   \\ \hline 
&\multicolumn{6}{c}{By the $P_1$-$P_1$/$P_0$ weak Galerkin finite element \eqref{V-h} and \eqref{W-h} }\\
 \hline 
 5&     0.536E-3 &  1.9&     0.501E-1 &  1.0&     0.299E-2 &  1.3 \\
 6&     0.137E-3 &  2.0&     0.251E-1 &  1.0&     0.124E-2 &  1.3 \\
 7&     0.343E-4 &  2.0&     0.125E-1 &  1.0&     0.578E-3 &  1.1 \\
 \hline 
&\multicolumn{6}{c}{By the $P_2$-$P_2$/$P_1$ weak Galerkin finite element \eqref{V-h} and \eqref{W-h} }\\
 \hline  
 4&     0.867E-4 &  3.0&     0.126E-1 &  2.0&     0.173E-2 &  2.3 \\
 5&     0.102E-4 &  3.1&     0.318E-2 &  2.0&     0.348E-3 &  2.3 \\
 6&     0.123E-5 &  3.0&     0.800E-3 &  2.0&     0.780E-4 &  2.2 \\ 
 \hline 
&\multicolumn{6}{c}{By the $P_3$-$P_3$/$P_2$ weak Galerkin finite element \eqref{V-h} and \eqref{W-h} }\\
 \hline  
 3&     0.754E-4 &  4.2&     0.807E-2 &  3.8&     0.164E-2 &  2.5 \\
 4&     0.476E-5 &  4.0&     0.964E-3 &  3.1&     0.210E-3 &  3.0 \\
 5&     0.294E-6 &  4.0&     0.123E-3 &  3.0&     0.252E-4 &  3.1 \\
 \hline 
&\multicolumn{6}{c}{By the $P_4$-$P_4$/$P_3$ weak Galerkin finite element \eqref{V-h} and \eqref{W-h} }\\
 \hline  
 1&     0.312E-1 &  0.0&     0.121E+1 &  0.0&     0.121E-1 &  0.0 \\
 2&     0.305E-3 &  6.7&     0.279E-1 &  5.4&     0.324E-2 &  1.9 \\
 3&     0.729E-5 &  5.4&     0.925E-3 &  4.9&     0.232E-3 &  3.8 \\
\hline 
\end{tabular} \end{center}  \end{table}

In the last 2D computation,  we compute the weak Galerkin finite element solutions 
    on non-convex polygon meshes shown in Figure \ref{f-2-6}.
We use the stabilizer-free method where we take $r=k+3$  in \eqref{wg-k} in computing the weak
  gradient. 
We get the  optimal order of convergence for all variables and in all norms in Table \ref{t4}.

\begin{figure}[H]
 \begin{center}\setlength\unitlength{1.0pt}
\begin{picture}(380,120)(0,0)
  \put(15,108){$G_1$:} \put(125,108){$G_2$:} \put(235,108){$G_3$:} 
  \put(0,-420){\includegraphics[width=380pt]{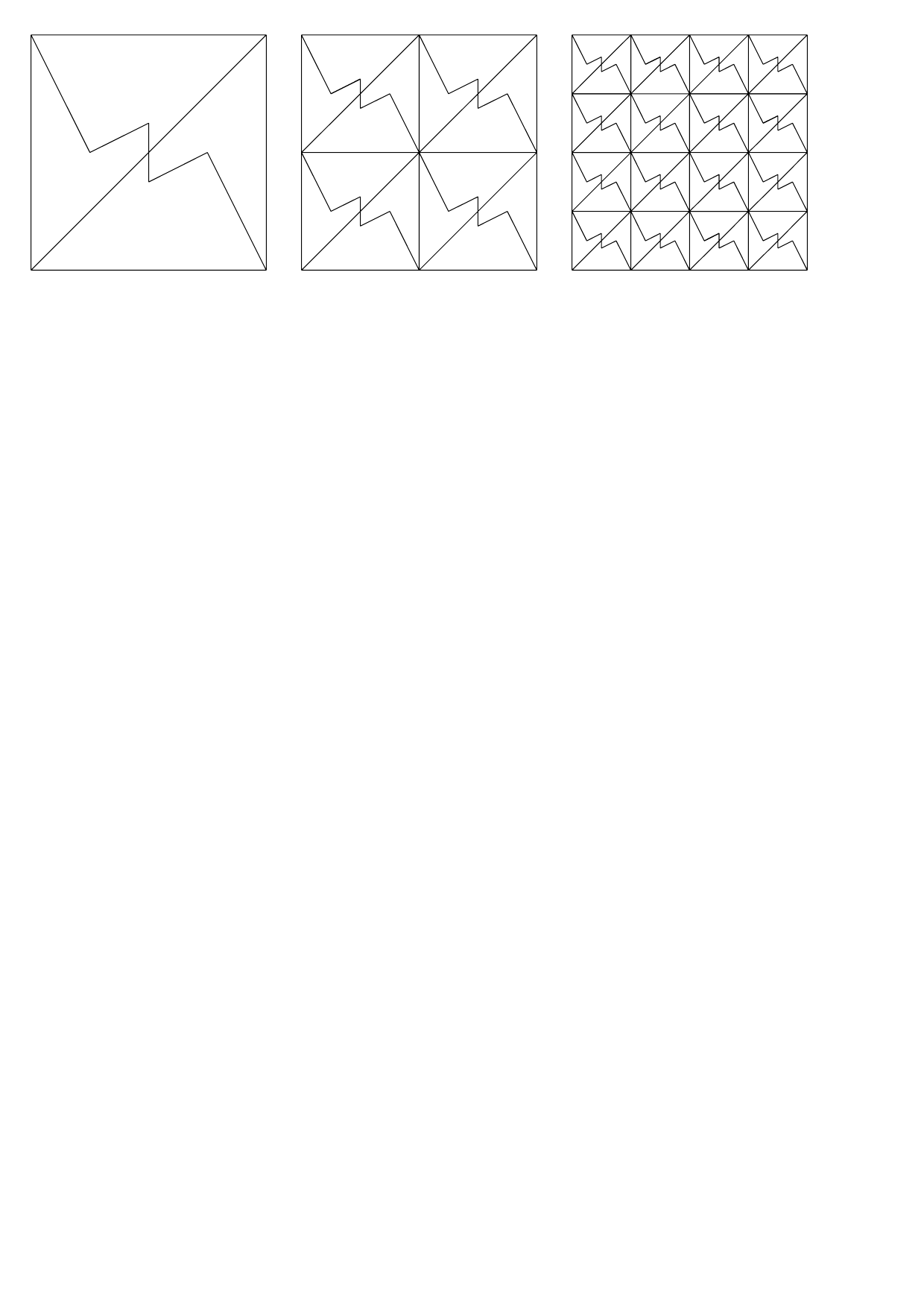}}  
 \end{picture}\end{center}
\caption{The non-convex polygon meshes for the computation in Table \ref{t4}. }\label{f-2-6}
\end{figure}

  \begin{table}[H]
  \caption{ Error profile for computing \eqref{s-2d} on meshes shown in Figure \ref{f-2-6}.} \label{t4}
\begin{center}  
   \begin{tabular}{c|rr|rr|rr}  
 \hline 
$G_i$ &  $ \|Q_h\b u - \b u_h \| $ & $O(h^r)$ &  $ \3bar Q_h \b u-\bw\3bar $ & $O(h^r)$ 
   &  $ \| p -p_h \| $ & $O(h^r)$   \\ \hline 
&\multicolumn{6}{c}{By the $P_1$-$P_1$/$P_0$ weak Galerkin finite element \eqref{V-h} and \eqref{W-h} }\\
 \hline 
 5&     0.312E-3 &  2.0&     0.344E-1 &  1.0&     0.837E-2 &  1.0 \\
 6&     0.787E-4 &  2.0&     0.172E-1 &  1.0&     0.420E-2 &  1.0 \\
 7&     0.197E-4 &  2.0&     0.863E-2 &  1.0&     0.210E-2 &  1.0 \\
 \hline 
&\multicolumn{6}{c}{By the $P_2$-$P_2$/$P_1$ weak Galerkin finite element \eqref{V-h} and \eqref{W-h} }\\
 \hline  
 4&     0.272E-4 &  3.0&     0.502E-2 &  1.9&     0.146E-2 &  1.9 \\
 5&     0.332E-5 &  3.0&     0.127E-2 &  2.0&     0.365E-3 &  2.0 \\
 6&     0.412E-6 &  3.0&     0.317E-3 &  2.0&     0.908E-4 &  2.0 \\ 
 \hline 
&\multicolumn{6}{c}{By the $P_3$-$P_3$/$P_2$ weak Galerkin finite element \eqref{V-h} and \eqref{W-h} }\\
 \hline  
 2&     0.205E-3 &  5.4&     0.141E-1 &  4.7&     0.585E-2 &  1.9 \\
 3&     0.136E-4 &  3.9&     0.193E-2 &  2.9&     0.779E-3 &  2.9 \\
 4&     0.844E-6 &  4.0&     0.247E-3 &  3.0&     0.924E-4 &  3.1 \\
 \hline 
&\multicolumn{6}{c}{By the $P_4$-$P_4$/$P_3$ weak Galerkin finite element \eqref{V-h} and \eqref{W-h} }\\
 \hline  
 1&     0.140E-2 &  0.0&     0.651E-1 &  0.0&     0.713E-2 &  0.0 \\
 2&     0.337E-4 &  5.4&     0.285E-2 &  4.5&     0.102E-2 &  2.8 \\
 3&     0.103E-5 &  5.0&     0.175E-3 &  4.0&     0.599E-4 &  4.1 \\
\hline 
\end{tabular} \end{center}  \end{table}

In the 3D test,  we solve the Brinkman problem \eqref{weak} on the unit cube domain $\Omega=
  (0,1)\times (0,1)\times (0,1)$, where $\kappa=1$.
The exact solution is chosen as
\an{\label{s-3d}\ad{
   \b u &=\p{ -2^{10} x^2 (1-x)^2 y^2 (1-y)^2 ( z -3 z^2 + 2z^3 )\qquad \\ \\
           \ \,2^{10} x^2 (1-x)^2 y^2 (1-y)^2 ( z -3 z^2 + 2z^3 ) \qquad \\ \\
   \ \,2^{10} \Big(( x -3 x^2 + 2x^3 )(y-y^2)^2 \qquad\qquad\qquad\\
       - (x-x^2)^2( y -3 x^2 + 2 x^3 )\Big) (z-z^2)^2 }, \\ 
    p&=10(3y^2-2y^3-y).   }
}

We first compute the weak Galerkin finite element solutions for the 3D problem \eqref{s-3d} by
  the algorithm \eqref{WG}, on tetrahedral meshes shown in Figure \ref{f-3-1}.
We use the stabilizer-free method where we take $r=k+1$  in \eqref{wg-k} in computing the weak
  gradient. 
The results are listed in Table \ref{t5} where we have the
  optimal order of convergence for all variables and in all norms.
  
\begin{figure}[H]
 \begin{center}\setlength\unitlength{1.0pt}
\begin{picture}(380,120)(0,0)
  \put(35,108){$G_1$:} \put(145,108){$G_2$:} \put(255,108){$G_3$:} 
  \put(0,-420){\includegraphics[width=380pt]{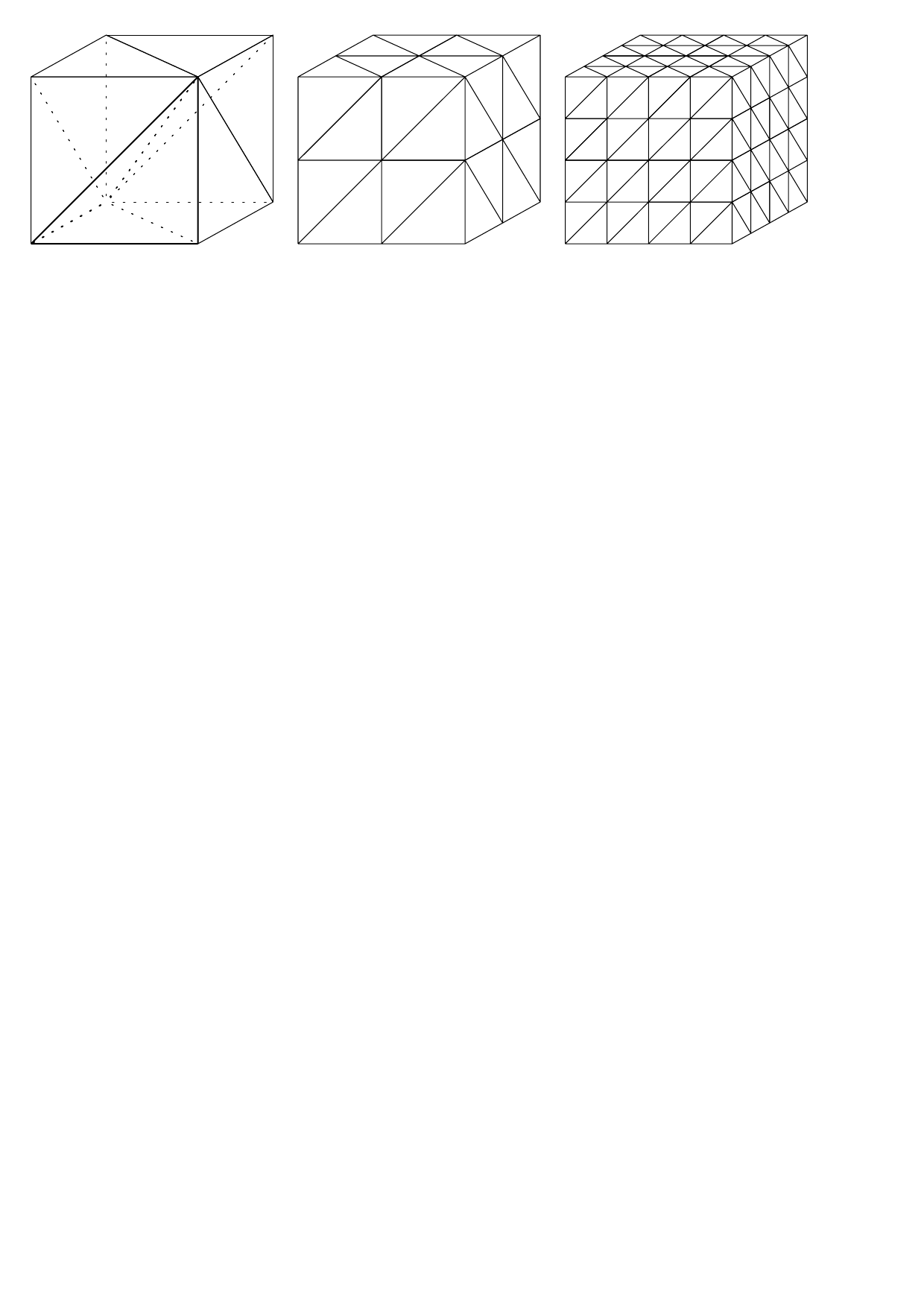}}  
 \end{picture}\end{center}
\caption{The triangular meshes for the computation in Table \ref{t5}. }\label{f-3-1}
\end{figure}

  \begin{table}[H]
  \caption{ Error profile for computing \eqref{s-3d} on meshes shown in Figure \ref{f-3-1}.} \label{t5}
\begin{center}  
   \begin{tabular}{c|rr|rr|rr}  
 \hline 
$G_i$ &  $ \|Q_h\b u - \b u_h \| $ & $O(h^r)$ &  $ \3bar Q_h \b u-\bw\3bar $ & $O(h^r)$ 
   &  $ \| p -p_h \| $ & $O(h^r)$   \\ \hline 
&\multicolumn{6}{c}{By the $P_1$-$P_1$/$P_0$ weak Galerkin finite element \eqref{V-h} and \eqref{W-h} }\\
 \hline 
 2 &    0.871E+1 &0.00 &    0.891E+0 &0.31 &    0.474E+0 &0.00 \\
 3 &    0.578E+1 &0.59 &    0.397E+0 &1.17 &    0.179E+0 &1.41 \\
 4 &    0.317E+1 &0.86 &    0.145E+0 &1.45 &    0.562E-1 &1.67 \\
 \hline 
&\multicolumn{6}{c}{By the $P_2$-$P_2$/$P_1$ weak Galerkin finite element \eqref{V-h} and \eqref{W-h} }\\
 \hline  
 1 &    0.690E+1 &0.00 &    0.772E+0 &0.00 &    0.790E+0 &0.00 \\
 2 &    0.343E+1 &1.01 &    0.406E+0 &0.93 &    0.242E+0 &1.70 \\
 3 &    0.133E+1 &1.36 &    0.931E-1 &2.12 &    0.434E-1 &2.48 \\ 
 \hline 
&\multicolumn{6}{c}{By the $P_3$-$P_3$/$P_2$ weak Galerkin finite element \eqref{V-h} and \eqref{W-h} }\\
 \hline  
 1 &    0.527E+1 &0.00 &    0.864E+0 &0.00 &    0.701E+0 &0.00 \\
 2 &    0.166E+1 &1.67 &    0.161E+0 &2.43 &    0.100E+0 &2.81 \\
 3 &    0.302E+0 &2.46 &    0.316E-1 &2.35 &    0.118E-1 &3.08 \\
\hline 
\end{tabular} \end{center}  \end{table}

We next compute the weak Galerkin finite element solutions for \eqref{s-3d} 
   on non-convex polyhedral meshes shown in Figure \ref{f-3-3}.
We use the stabilizer-free method where we take $r=k+2$  in \eqref{wg-k} in computing the weak
  gradient. 
The results are listed in Table \ref{t6} where we have the
  optimal order of convergence for all variables and in all norms.
  
\begin{figure}[H]
 \begin{center}\setlength\unitlength{1.0pt}
\begin{picture}(380,120)(0,0)
  \put(35,108){$G_1$:} \put(145,108){$G_2$:} \put(255,108){$G_3$:} 
  \put(0,-420){\includegraphics[width=380pt]{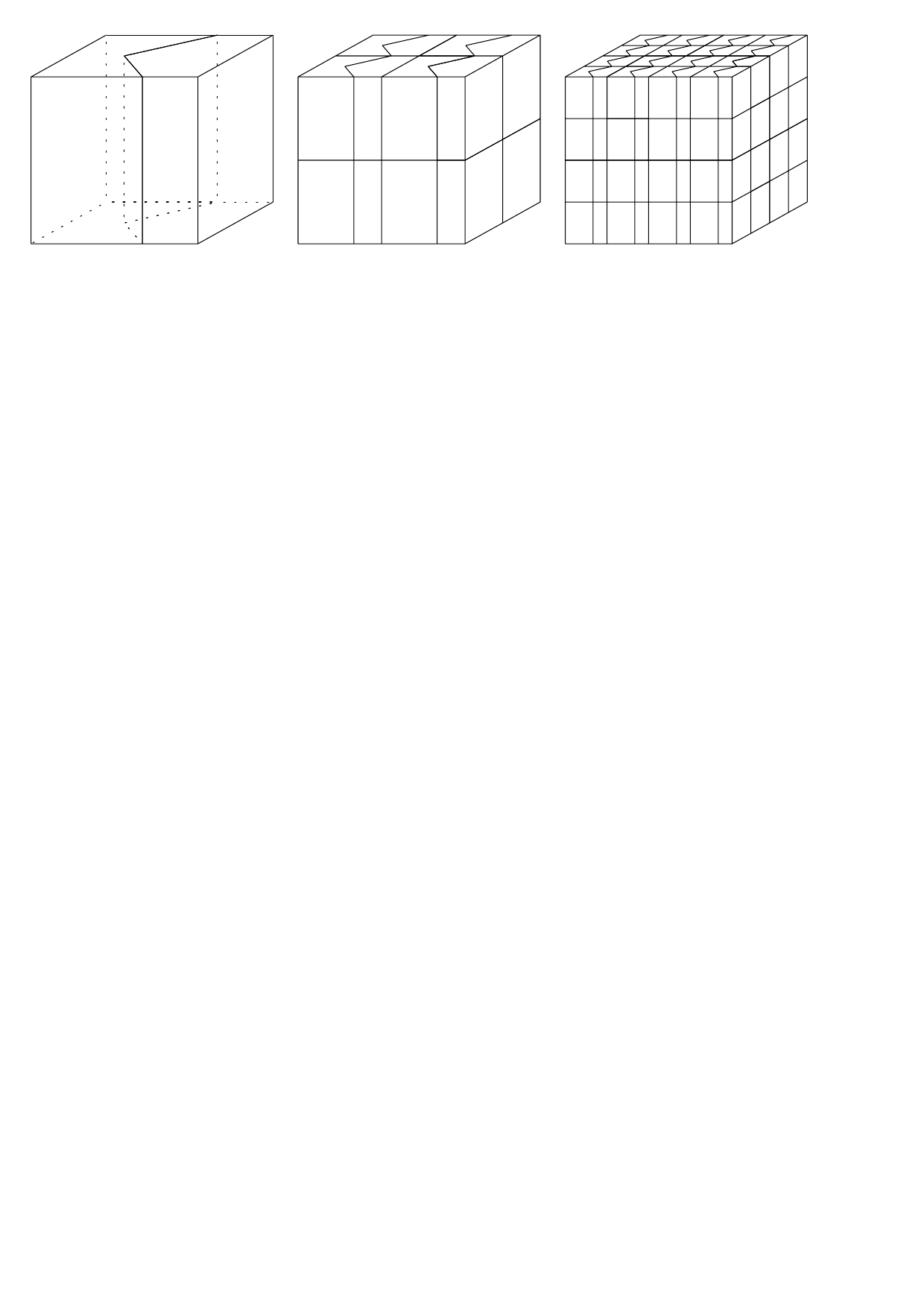}}  
 \end{picture}\end{center}
\caption{The triangular meshes for the computation in Table \ref{t6}. }\label{f-3-3}
\end{figure}

  \begin{table}[H]
  \caption{ Error profile for computing \eqref{s-3d} on meshes shown in Figure \ref{f-3-3}.} \label{t6}
\begin{center}  
   \begin{tabular}{c|rr|rr|rr}  
 \hline 
$G_i$ &  $ \|Q_h\b u - \b u_h \| $ & $O(h^r)$ &  $ \3bar Q_h \b u-\bw\3bar $ & $O(h^r)$ 
   &  $ \| p -p_h \| $ & $O(h^r)$   \\ \hline 
&\multicolumn{6}{c}{By the $P_1$-$P_1$/$P_0$ weak Galerkin finite element \eqref{V-h} and \eqref{W-h} }\\
 \hline 
 2 &    0.116E+0 &1.51 &    0.366E+1 &0.00 &    0.566E+0 &0.00 \\
 3 &    0.253E-1 &2.20 &    0.142E+1 &1.36 &    0.220E+0 &1.36 \\
 4 &    0.533E-2 &2.25 &    0.433E+0 &1.71 &    0.678E-1 &1.70 \\
 \hline 
&\multicolumn{6}{c}{By the $P_2$-$P_2$/$P_1$ weak Galerkin finite element \eqref{V-h} and \eqref{W-h} }\\
 \hline  
 2 &    0.144E+0 &2.37 &    0.528E+1 &1.59 &    0.157E+1 &2.30 \\
 3 &    0.161E-1 &3.16 &    0.111E+1 &2.25 &    0.269E+0 &2.55 \\
 4 &    0.352E-2 &2.20 &    0.147E+0 &2.92 &    0.320E-1 &3.07 \\
 \hline 
&\multicolumn{6}{c}{By the $P_3$-$P_3$/$P_2$ weak Galerkin finite element \eqref{V-h} and \eqref{W-h} }\\
 \hline  
 1 &    0.120E+1 &0.00 &    0.414E+2 &0.00 &    0.168E+2 &0.00 \\
 2 &    0.206E+0 &2.54 &    0.994E+1 &2.06 &    0.341E+1 &2.30 \\
 3 &    0.772E-2 &4.74 &    0.639E+0 &3.96 &    0.198E+0 &4.10 \\
\hline 
\end{tabular} \end{center}  \end{table}
 
  We compute the weak Galerkin finite element solutions for \eqref{s-3d} 
   on non-convex polyhedral meshes shown in Figure \ref{f-3-4}, in Table \ref{t7}.
We use the stabilizer-free method where we take $r=k+3$  in \eqref{wg-k} in computing the weak
  gradient.  
  
\begin{figure}[H]
 \begin{center}\setlength\unitlength{1.0pt}
\begin{picture}(380,120)(0,0)
  \put(35,108){$G_1$:} \put(145,108){$G_2$:} \put(255,108){$G_3$:} 
  \put(0,-420){\includegraphics[width=380pt]{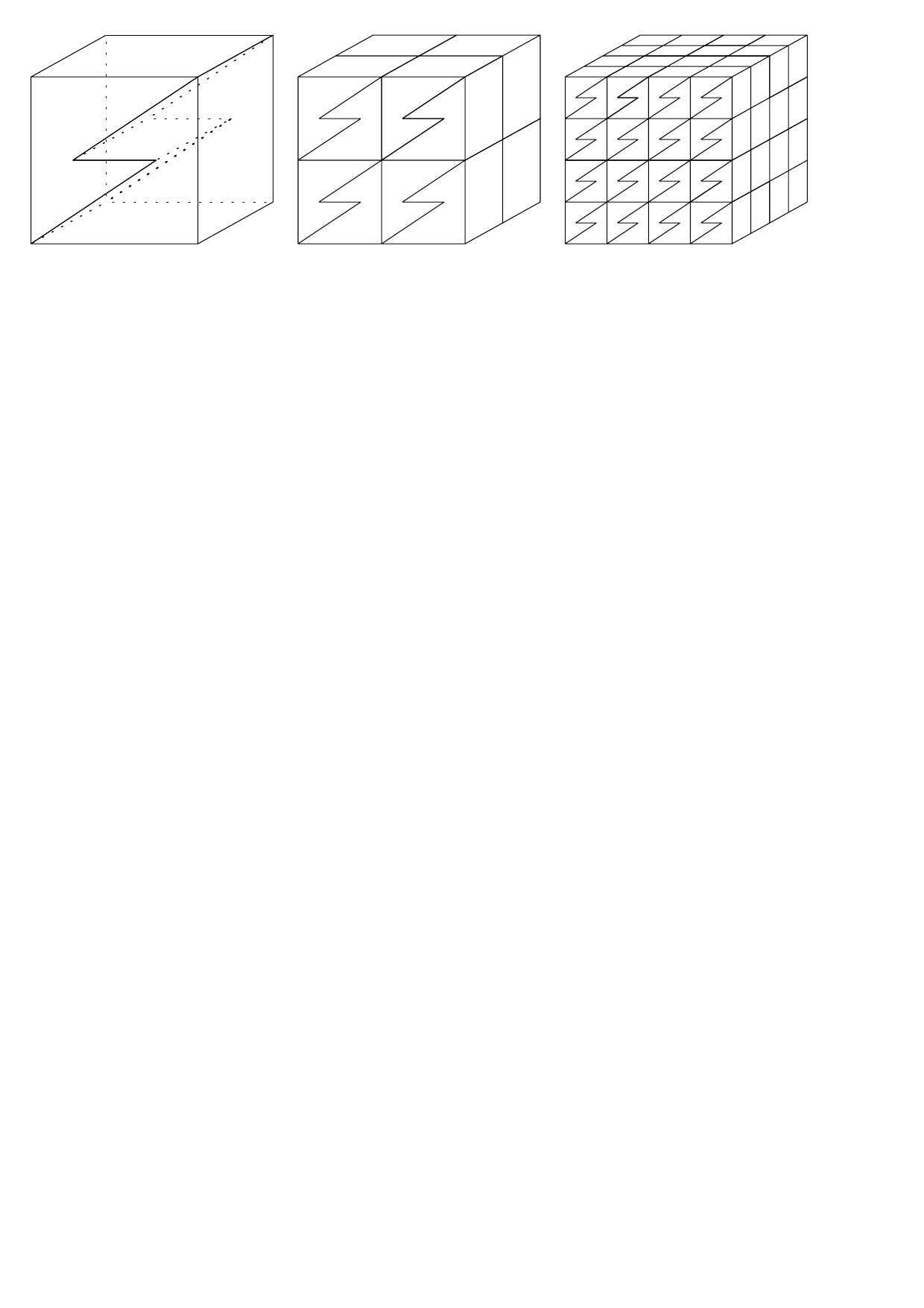}}  
 \end{picture}\end{center}
\caption{The triangular meshes for the computation in Table \ref{t7}. }\label{f-3-4}
\end{figure}

  \begin{table}[H]
  \caption{ Error profile for computing \eqref{s-3d} on meshes shown in Figure \ref{f-3-4}.} \label{t7}
\begin{center}  
   \begin{tabular}{c|rr|rr|rr}  
 \hline 
$G_i$ &  $ \|Q_h\b u - \b u_h \| $ & $O(h^r)$ &  $ \3bar Q_h \b u-\bw\3bar $ & $O(h^r)$ 
   &  $ \| p -p_h \| $ & $O(h^r)$   \\ \hline 
&\multicolumn{6}{c}{By the $P_1$-$P_1$/$P_0$ weak Galerkin finite element \eqref{V-h} and \eqref{W-h} }\\
 \hline 
 2 &    0.116E+0 &1.51 &    0.366E+1 &0.00 &    0.566E+0 &0.00 \\
 3 &    0.253E-1 &2.20 &    0.142E+1 &1.36 &    0.220E+0 &1.36 \\
 4 &    0.533E-2 &2.25 &    0.433E+0 &1.71 &    0.678E-1 &1.70 \\
 \hline 
&\multicolumn{6}{c}{By the $P_2$-$P_2$/$P_1$ weak Galerkin finite element \eqref{V-h} and \eqref{W-h} }\\
 \hline  
 2 &    0.144E+0 &2.37 &    0.528E+1 &1.59 &    0.157E+1 &2.30 \\
 3 &    0.161E-1 &3.16 &    0.111E+1 &2.25 &    0.269E+0 &2.55 \\
 4 &    0.352E-2 &2.20 &    0.147E+0 &2.92 &    0.320E-1 &3.07 \\
 \hline 
&\multicolumn{6}{c}{By the $P_3$-$P_3$/$P_2$ weak Galerkin finite element \eqref{V-h} and \eqref{W-h} }\\
 \hline  
 1 &    0.120E+1 &0.00 &    0.414E+2 &0.00 &    0.168E+2 &0.00 \\
 2 &    0.206E+0 &2.54 &    0.994E+1 &2.06 &    0.341E+1 &2.30 \\
 3 &    0.772E-2 &4.74 &    0.639E+0 &3.96 &    0.198E+0 &4.10 \\
\hline 
\end{tabular} \end{center}  \end{table}

  We compute the weak Galerkin finite element solutions for \eqref{s-3d} 
   on polyhedral meshes shown in Figure \ref{f-3-5}, in Table \ref{t8}.
We use the stabilizer-free method where we take $r=k+1$  in \eqref{wg-k} in computing the weak
  gradient.  
  
\begin{figure}[H]
 \begin{center}\setlength\unitlength{1.0pt}
\begin{picture}(380,120)(0,0)
  \put(35,108){$G_1$:} \put(145,108){$G_2$:} \put(255,108){$G_3$:} 
  \put(0,-420){\includegraphics[width=380pt]{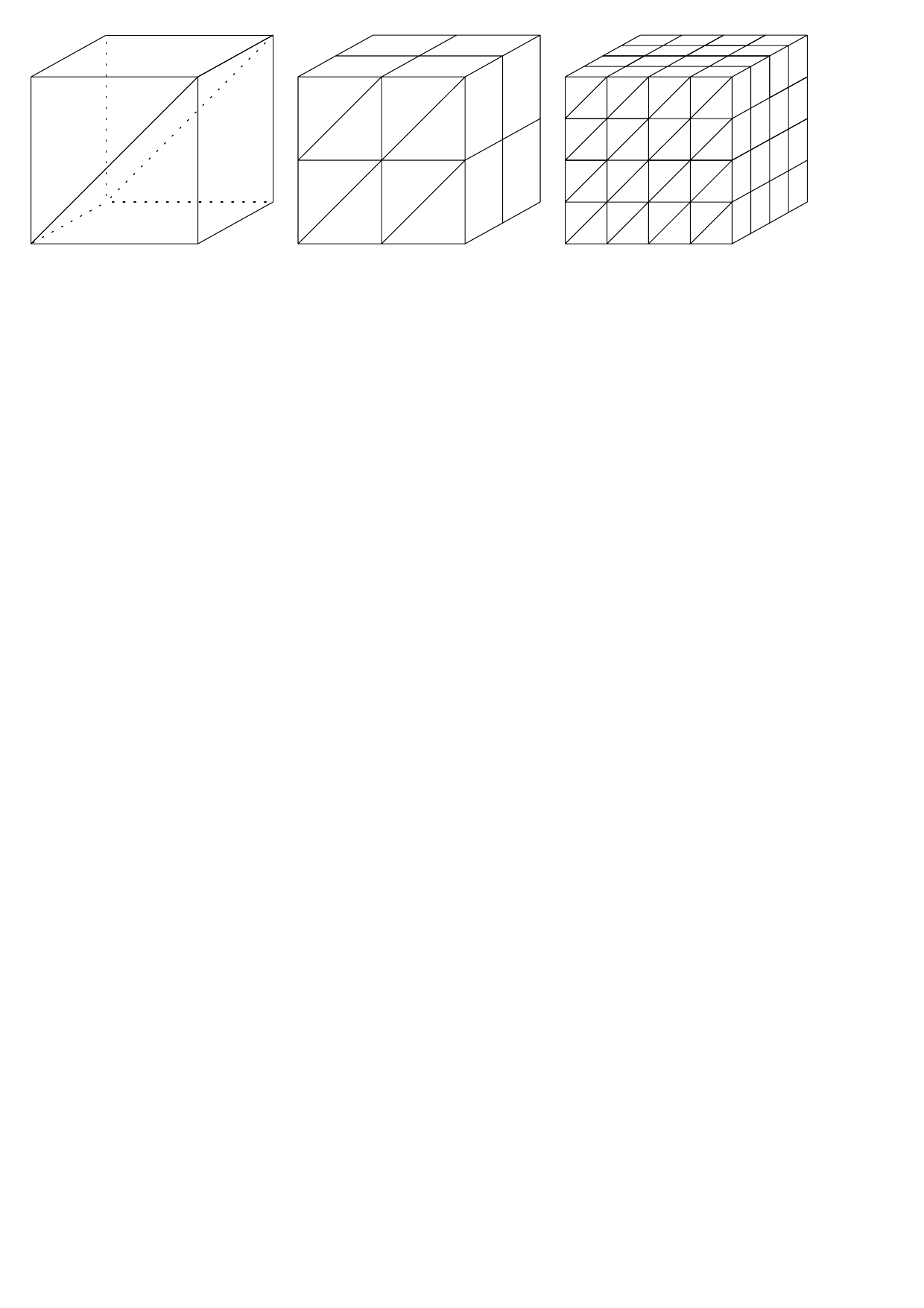}}  
 \end{picture}\end{center}
\caption{The triangular meshes for the computation in Table \ref{t8}. }\label{f-3-5}
\end{figure}

  \begin{table}[H]
  \caption{ Error profile for computing \eqref{s-3d} on meshes shown in Figure \ref{f-3-5}.} \label{t8}
\begin{center}  
   \begin{tabular}{c|rr|rr|rr}  
 \hline 
$G_i$ &  $ \|Q_h\b u - \b u_h \| $ & $O(h^r)$ &  $ \3bar Q_h \b u-\bw\3bar $ & $O(h^r)$ 
   &  $ \| p -p_h \| $ & $O(h^r)$   \\ \hline 
&\multicolumn{6}{c}{By the $P_1$-$P_1$/$P_0$ weak Galerkin finite element \eqref{V-h} and \eqref{W-h} }\\
 \hline 
 2 &    0.879E-1 &1.97 &    0.333E+1 &0.00 &    0.512E+0 &0.22 \\
 3 &    0.317E-1 &1.47 &    0.130E+1 &1.36 &    0.315E+0 &0.70 \\
 4 &    0.748E-2 &2.08 &    0.434E+0 &1.58 &    0.116E+0 &1.44 \\ 
 \hline 
&\multicolumn{6}{c}{By the $P_2$-$P_2$/$P_1$ weak Galerkin finite element \eqref{V-h} and \eqref{W-h} }\\
 \hline  
 2 &    0.107E+0 &1.90 &    0.341E+1 &0.93 &    0.127E+1 &1.78 \\
 3 &    0.103E-1 &3.39 &    0.756E+0 &2.17 &    0.215E+0 &2.56 \\
 4 &    0.340E-2 &1.59 &    0.111E+0 &2.76 &    0.301E-1 &2.83 \\
 \hline 
&\multicolumn{6}{c}{By the $P_3$-$P_3$/$P_2$ weak Galerkin finite element \eqref{V-h} and \eqref{W-h} }\\
 \hline  
 1 &    0.326E+0 &0.00 &    0.732E+1 &0.00 &    0.857E+1 &0.00 \\
 2 &    0.103E+0 &1.66 &    0.475E+1 &0.62 &    0.158E+1 &2.44 \\
 3 &    0.519E-2 &4.31 &    0.359E+0 &3.73 &    0.120E+0 &3.72 \\
\hline 
\end{tabular} \end{center}  \end{table}

As the last test, compute the weak Galerkin finite element solutions for \eqref{s-3d} 
   on non-convex polyhedral meshes shown in Figure \ref{f-3-7}, in Table \ref{t9}.
We use the stabilizer-free method where we take $r=k+2$  in \eqref{wg-k} in computing the weak
  gradient.  
  
\begin{figure}[H]
 \begin{center}\setlength\unitlength{1.0pt}
\begin{picture}(380,120)(0,0)
  \put(35,108){$G_1$:} \put(145,108){$G_2$:} \put(255,108){$G_3$:} 
  \put(0,-420){\includegraphics[width=380pt]{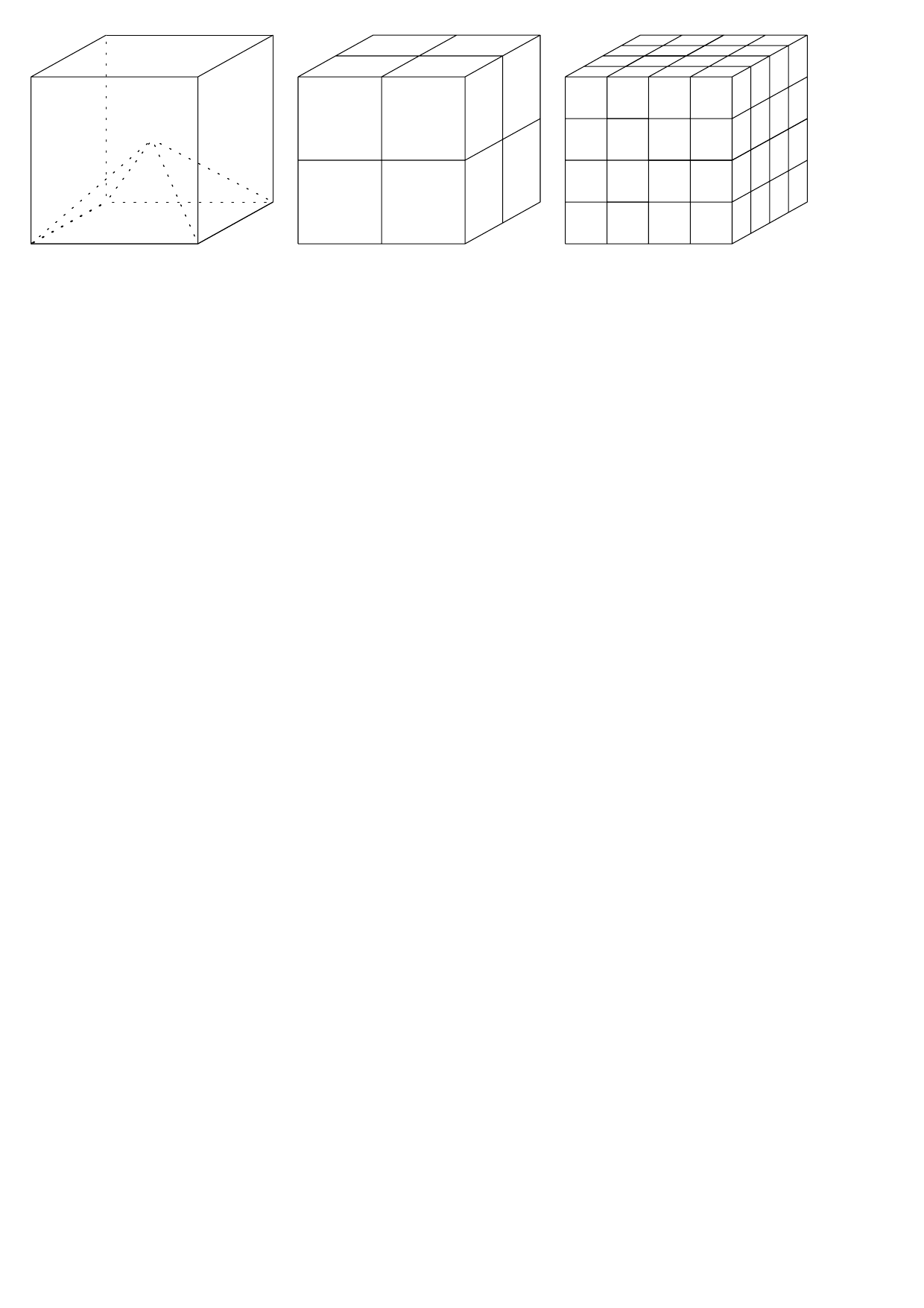}}  
 \end{picture}\end{center}
\caption{The triangular meshes for the computation in Table \ref{t9}. }\label{f-3-7}
\end{figure}

  \begin{table}[H]
  \caption{ Error profile for computing \eqref{s-3d} on meshes shown in Figure \ref{f-3-7}.} \label{t9}
\begin{center}  
   \begin{tabular}{c|rr|rr|rr}  
 \hline 
$G_i$ &  $ \|Q_h\b u - \b u_h \| $ & $O(h^r)$ &  $ \3bar Q_h \b u-\bw\3bar $ & $O(h^r)$ 
   &  $ \| p -p_h \| $ & $O(h^r)$   \\ \hline 
&\multicolumn{6}{c}{By the $P_1$-$P_1$/$P_0$ weak Galerkin finite element \eqref{V-h} and \eqref{W-h} }\\
 \hline 
 2 &    0.164E+0 &0.77 &    0.858E+1 &0.00 &    0.320E+1 &0.00 \\
 3 &    0.969E-1 &0.76 &    0.350E+1 &1.29 &    0.109E+1 &1.56 \\
 4 &    0.362E-1 &1.42 &    0.122E+1 &1.52 &    0.248E+0 &2.13 \\
 \hline 
&\multicolumn{6}{c}{By the $P_2$-$P_2$/$P_1$ weak Galerkin finite element \eqref{V-h} and \eqref{W-h} }\\
 \hline  
 2 &    0.198E+0 &2.81 &    0.843E+1 &1.82 &    0.396E+1 &2.28 \\
 3 &    0.313E-1 &2.66 &    0.244E+1 &1.79 &    0.786E+0 &2.33 \\
 4 &    0.325E-2 &3.27 &    0.367E+0 &2.74 &    0.110E+0 &2.84 \\
 \hline 
&\multicolumn{6}{c}{By the $P_3$-$P_3$/$P_2$ weak Galerkin finite element \eqref{V-h} and \eqref{W-h} }\\
 \hline  
 1 &    0.216E+1 &0.00 &    0.455E+2 &0.00 &    0.485E+2 &0.00 \\
 2 &    0.283E+0 &2.93 &    0.154E+2 &1.56 &    0.685E+1 &2.82 \\
 3 &    0.153E-1 &4.21 &    0.145E+1 &3.41 &    0.568E+0 &3.59 \\
\hline 
\end{tabular} \end{center}  \end{table}

\end{document}